\DeclareSymbolFont{yhlargesymbols}{OMX}{yhex}{m}{n}
\DeclareMathAccent{\overarc}{\mathord}{yhlargesymbols}{"F3}
\algrenewcommand\algorithmicrequire{\textbf{Input:}}
\algrenewcommand\algorithmicensure{\textbf{Output:}}
\newtheorem{theorem}{Theorem}[section]
\newtheorem{lemma}[theorem]{Lemma}
\newtheorem{corollary}[theorem]{Corollary}
\newtheorem{definition}{Definition}[section]
\newtheorem{proposition}[theorem]{Proposition}
\newtheorem{fact}{Observation}[section]
\def\moverlay{\mathpalette\mov@rlay}
\def\mov@rlay#1#2{\leavevmode\vtop{%
    \baselineskip\z@skip \lineskiplimit-\maxdimen
    \ialign{\hfil$\m@th#1##$\hfil\cr#2\crcr}}}
\newcommand{\charfusion}[3][\mathord]{
  #1{\ifx#1\mathop\vphantom{#2}\fi
    \mathpalette\mov@rlay{#2\cr#3}
  }
  \ifx#1\mathop\expandafter\displaylimits\fi}
\DeclareRobustCommand\bigop[1]{%
  \mathop{\vphantom{\sum}\mathpalette\bigop@{#1}}\slimits@
}
\newcommand{\bigop@}[2]{%
  \vcenter{%
    \sbox\z@{$#1\sum$}%
    \hbox{\resizebox{\ifx#1\displaystyle.9\fi\dimexpr\ht\z@+\dp\z@}{!}{$\m@th#2$}}%
  }%
}
\newcommand{\cupdot}{\charfusion[\mathbin]{\cup}{\cdot}}
\DeclareMathOperator{\bigcupdot}{\charfusion[\mathop]{\bigcup}{\cdot}}
\newcommand{\AX}[1]{\textnormal{#1}}
\DeclareMathOperator{\lca}{lca}
\DeclareMathOperator{\child}{child}
\DeclareMathOperator{\parent}{parent}
\DeclareMathOperator{\build}{\mathtt{BUILD}}
\DeclareMathOperator{\bmg}{BMG}
\DeclareMathOperator{\qbmg}{qBMG}
\newcommand{\Rbin}{\mathop{\mathscr{R}^{\textrm{B}}}}
\newcommand{\hourglass}{\mathrel{\text{\ooalign{$\searrow$\cr$\nearrow$}}}}
\newcommand{\MOD}[1]{{[#1]}}
\providecommand{\keywords}[1]{\textbf{\textit{Keywords: }} #1}
\title{Quasi-Best Match Graphs}
\author[1]{Annachiara Korchmaros}
\author[1,4]{David Schaller}
\author[2]{Marc Hellmuth}
\author[1,3-7]{Peter F.\ Stadler}
\affil[1]{Bioinformatics Group, Department of Computer Science \&
  Interdisciplinary Center for Bioinformatics, Universit{\"a}t Leipzig,
  H{\"a}rtelstra{\ss}e 16-18, D-04107 Leipzig, Germany
  \authorcr \texttt{annachiara@bioinf.uni-leipzig.de} $\cdot$
  \texttt{sdavid@bioinf.uni-leipzig.de} $\cdot$
  \texttt{studla@bioinf.uni-leipzig.de}}
\affil[2]{Department of Mathematics, Faculty of Science,
  Stockholm University, SE-10691 Stockholm, Sweden
  \authorcr \texttt{marc.hellmuth@math.su.se}}
\affil[3]{German Centre for Integrative Biodiversity Research
  (iDiv) Halle-Jena-Leipzig, Competence Center for Scalable Data Services
  and Solutions Dresden-Leipzig, Leipzig Research Center for Civilization
  Diseases, and Centre for Biotechnology and Biomedicine at Leipzig
  University at Universit{\"a}t Leipzig}
\affil[4]{Max Planck Institute for Mathematics in the Sciences,
  Inselstra{\ss}e 22, D-04103 Leipzig, Germany}
\affil[5]{Institute for Theoretical Chemistry, University of Vienna,
  W{\"a}hringerstrasse 17, A-1090 Wien, Austria}
\affil[6]{Facultad de Ciencias, Universidad Nacional de Colombia, Bogot{\'a},
  Colombia}
\affil[7]{Santa Fe Institute, 1399 Hyde Park Rd., Santa Fe NM 87501,
  USA}
\date{\ }
\begin{document}
  
  \maketitle 
  
  \abstract{
    Quasi-best match graphs (qBMGs) are a hereditary class of directed,
    properly vertex-colored graphs. They arise naturally in mathematical
    phylogenetics as a generalization of best match graphs, which formalize
    the notion of evolutionary closest relatedness of genes (vertices) in
    multiple species (vertex colors). They are explained by rooted trees
    whose leaves correspond to vertices. In contrast to BMGs, qBMGs represent
    only best matches at a restricted phylogenetic distance. We provide
    characterizations of qBMGs that give rise to polynomial-time recognition
    algorithms and identify the BMGs as the qBMGs that are
    color-sink-free. Furthermore, two-colored qBMGs are characterized as
    directed graphs satisfying three simple local conditions, two of which
    have appeared previously, namely bi-transitivity in the sense of Das
    \emph{et al.} (2021) and a hierarchy-like structure of out-neighborhoods,
    i.e., $N(x)\cap N(y)\in\{N(x),N(y),\emptyset\}$ for any two vertices $x$
    and $y$. Further results characterize qBMGs that can be explained by
    binary phylogenetic trees.
  }

  \bigskip
  \noindent
  \keywords{Colored directed graphs; hierarchies; rooted trees; phylogenetic
    combinatorics; best matches}

\sloppy

\section{Introduction}

Best match graphs (BMGs) appear in mathematical phylogenetics to formalize
the notion of evolutionary closest relatives (homologs) of a gene in a
different species~\cite{Geiss:19a}. Phylogenetic relatedness derives
from a tree $T$ that describes the evolutionary history of the ``taxa''
(e.g.\ genes or species) at the leaves. In our setting, each leaf of $T$
represents an extant gene, and we assume to have additional knowledge of
the species $\sigma(x)$ in which each gene $x$ is found.  Given $T$, a
leaf $x$ is more closely related to $y$ than to $z$, if and only if the
lowest common ancestor $\lca(x,y)$ is a proper descendant of $\lca(x,z)$
in $T$.  Moreover, $xy$ is a best match if there is no gene $y'$ in the
same species as $y$, i.e., $\sigma(y')=\sigma(y)$, which is more closely
related to $x$. Best match graphs (BMGs) collect the best match
information for a family of related genes from different species.

BMGs have been studied in some
detail~\cite{Geiss:19a,Geiss:20b,Schaller:21b,Schaller:21c,Schaller:21d}
because of their close connection to the practically important problem of
orthology detection. Two genes $x$ and $y$ from different species
$\sigma(x)\ne\sigma(y)$ are orthologs if their last common ancestor
coincides with the divergence of the two species $\sigma(x)$ and
$\sigma(y)$ in which they reside~\cite{Fitch:70,Fitch:00}. In the absence
of horizontal gene transfer, orthologs are reciprocal best matches, i.e.,
$x$ is a best match of $y$, and $y$ is a best match of
$x$~\cite{Geiss:19a}. Identifying orthologs is an important task in
several areas of computational biology. In genome annotation, orthologs
are of interest because they are expected to have analogous functions in
different species. In contrast, homologous genes with a less direct
relationship are usually expected to have similar but distinct
functions~\cite{Koonin:05,Altenhoff:12,Gabaldon:13}. In phylogenomics, a
large collection of unrelated groups of orthologous genes are used
because their phylogenetic tree is nearly identical to the phylogenetic
tree representing the evolutionary history of the underlying
species~\cite{Delsuc:05,Young:19}. In structural biology, amino acids
conserved among orthologs but differing between paralogs (genes that
diverged from a duplication event) are used to identify sites that
determine the specificity of protein interactions~\cite{Mirny:02}. A
diverse class of widely used orthology detection tools is based on the
concept of reciprocal best matches,
see~\cite{Nichio:17,Rusin:14,Setubal:18a} for reviews.

\begin{figure}[t]
  \centering
  \includegraphics[width=0.6\linewidth]{./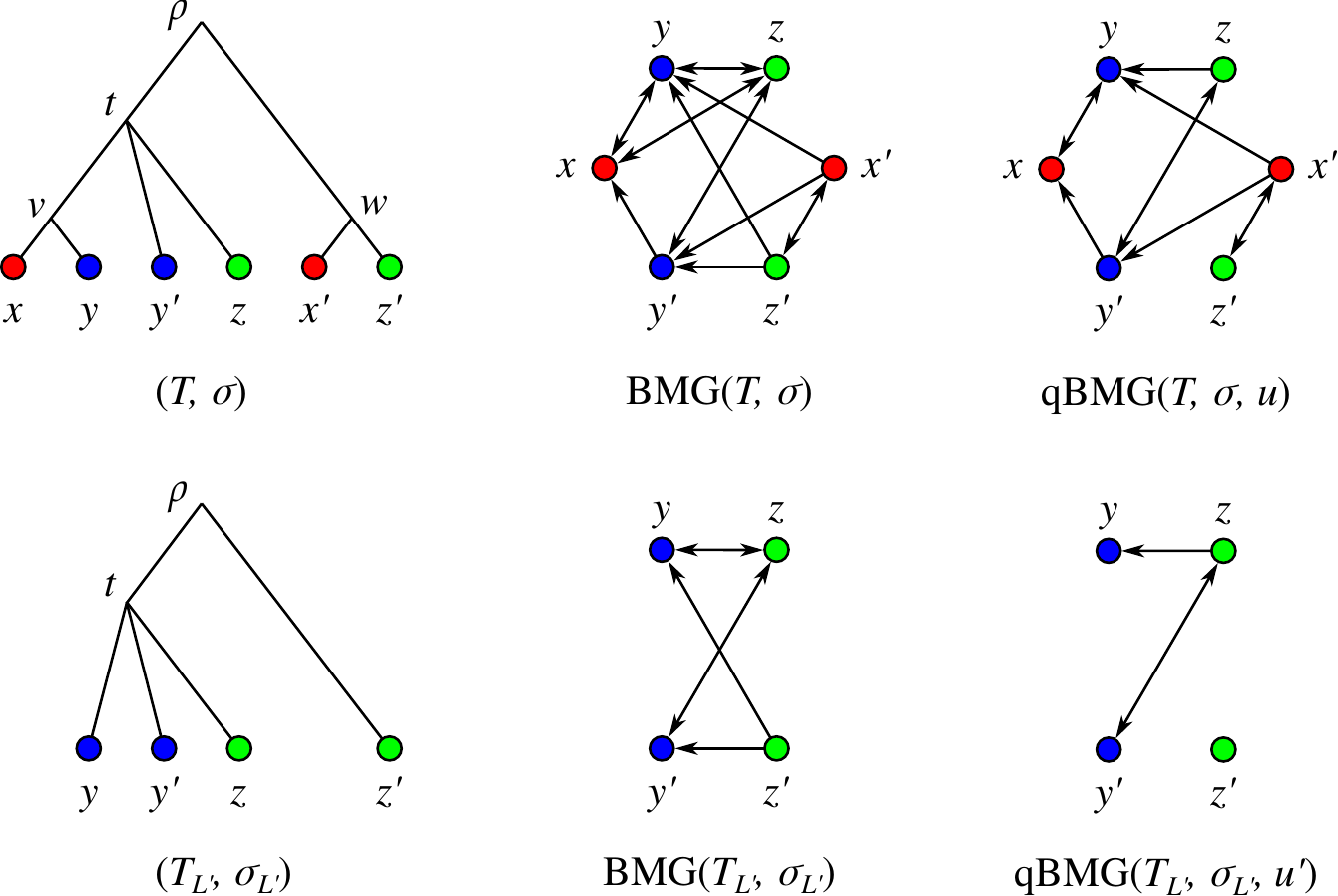}
  \caption{\emph{Upper Row:} The leaf-colored tree $(T,\sigma)$ gives
    rise to the best match graph $\bmg(T,\sigma)$ whose directed edges
    denote best matches. Reciprocal best matches correspond to
    bi-directional arcs (double-headed arrows). Ignoring best matches
    that are too distant yields a subgraph of $\bmg(T,\sigma)$. The notion of 
    too distant is formalized by a truncation map $u$ that
    depends on a query gene $q$ and a target species (color)
    $\alpha$. The quasi-best match graph $\qbmg(T,\sigma,u)$ contains
    only directed edges $xy$ if $y$ is a best match for $x$ and
    $\lca(x,y)$ is a descendant or equal to the detection limit
    $u(x,\alpha)$ with $\alpha=\sigma(y)$. Here, the detection limits are
    defined as follows: $u(x,\sigma(z))=u(y,\sigma(z))=v$,
    $u(z,\sigma(x))=z$, and $u(z',\sigma(y))=w$, whereas $u(q,\alpha)$ is
    the root $\rho$ if $\sigma(q)\ne\alpha$ for the other combinations of
    leaves and colors.\newline \emph{Lower Row:} Shown is the
    restricted leaf-colored tree $(T_{L'},\sigma_{L'})$ where $L'$
    consists of all vertices of color $\sigma(y)$ and $\sigma(z)$.  The
    tree $(T_{L'},\sigma_{L'})$ is obtained from $(T,\sigma)$ by removing
    the vertices $x$ and $x'$ as well as suppression of the vertices $v$
    and $w$ while keeping the colors of all remaining leaves. The
    BMG explained by $(T_{L'},\sigma_{L'})$ is an induced subgraph 
    $\bmg(T,\sigma)$. For qBMGs, the subgraph relationship also depends on 
    the choice of the truncation map $u'$. Choosing $u'$ as in 
    Equation~\ref{eq:restriction-u} ensures that 
    $\qbmg(T_{L'},\sigma_{L'},u')$ is an induced subgraph of
    $\qbmg(T,\sigma,u)$. In this example, $u'$ maps all combinations of
    leaves and colors to $\rho$ except for $u'(y,\sigma(z))=y$ and
    $u'(z',\sigma(y))=z'$.}
  \label{fig:qBMG-example}
\end{figure}

A complication in determining best matches arises when genes are so
distant that evolutionary relatedness, i.e., homology, cannot be
determined unambiguously. This limit on evolutionary distance will, in
general, depend both on the query gene $x$ and on the target genome or,
equivalently, a species $\alpha$, because significant sequence
similarities are easier to detect for large query sequences and in
smaller target genomes~\cite{Karlin:90}.  Thus best matches $y$ of $x$
are only considered if $\lca(x,y)$ is not closer to the root of $T$ than
the ``detection limit'' $u(x,\alpha)$ for target genome
$\sigma(y)=\alpha$. The resulting \emph{quasi-best match graph} (qBMG) is
always a subgraph of the corresponding BMG, see
Figure~\ref{fig:qBMG-example} for an illustrative example. BMGs have useful 
properties, such as the existence of a unique ``least
resolved tree'' that explains a given BMG~\cite{Geiss:19a}. Furthermore,
the least resolved tree of a BMG is displayed by the true phylogenetic
tree from which the BMG was derived. Here, we will be concerned with whether 
such properties generalize to the more general qBMGs,
which appear instead of the simpler BMGs, particularly when gene
families are considered at large phylogenetic scales.

An interest in qBMGs can also be motivated by studying classes of
(uncolored) digraphs defined in terms of constraints on neighborhoods. A
key result of~\cite{Geiss:19a} characterizes two-colored BMGs as
bipartite graphs whose out-neighborhoods satisfy four properties, of
which two have also been the subject of investigations in quite
different contexts in graph theory. Sink-free graphs have been studied in
the context of certain partition problems~\cite{Adam:84}, the construction
of orientations on graphs~\cite{Cohn:02}, and in conjunction with a variety
of algebraic structures~\cite{Abrams:10}. Bitransitive graphs were
introduced in~\cite{Das:21} in the context of
bitournaments. In~\cite{Korchmaros:21a,Korchmaros:20b} the condition that
$G$ is sink-free was lifted. As we shall see below, this also leads to
qBMGs in a very natural way.

This contribution is organized as follows: After introducing basic concepts
and fixing the notation in Section~\ref{sect:notation}, we provide a formal
definition of qBMGs in Section~\ref{sec:quasi-bmg} and derive their most
basic properties. In particular, we show that the qBMGs are exactly the
graphs that are obtained from BMGs by deleting, for an arbitrary given list
of pairs $(x,s)$ of vertices $x$ and colors $s$, all edges to out-neighbors
of $x$ with color $s$. In contrast to BMGs, the qBMGs form a hereditary
graph class. In Section~\ref{sect:triples}, a characterization of qBMGs
with an arbitrary number of colors given in terms of consistency of
informative and forbidden triples. This, in particular, allows us to
identify the BMGs as the qBMGs for which each gene $x$ has some
out-neighbor $y$ for every species $\sigma(y)$ that is distinct from
$\sigma(x)$, i.e., as the ``color-sink-free qBMGs''.
Section~\ref{sec:lrt} and~\ref{sect:be-qBMG} are then concerned with
questions related to least-resolved and binary trees to explain qBMGs.
Section~\ref{sect:2qBMG} is devoted to the 2-colored case. We
establish the equivalence between 2-qBMGs and the graphs studied in
~\cite{Korchmaros:21a,Korchmaros:20b}, obtain a simple characterization that
exposes the hierarchy-like structure of the out-neighborhoods, and derive
an explicit construction for an explaining tree and truncation map. We also
characterize 2-qBMGs in a setting where only the graph but not the
coloring is given.

\section{Notation and Preliminaries}
\label{sect:notation} 
\subsection{Graphs, Trees, and Vertex Colorings}

\paragraph{Vertex-Colored Graphs}
In this paper we consider simple, directed graphs $G=(V,E)$ with vertex set
$V=V(G)$ and edge set $E=E(G)$ unless $G$ is explicitly specified as
undirected. A graph is simple if it has neither parallel edges nor loops. A
graph $G'$ is a subgraph of $G$, in symbols $G'\subseteq G$ if
$V(G')\subseteq V(G)$ and $E(G')\subseteq E(G)$. For $W\subseteq V$ we
write $G[W]$ for the subgraph induced by $W$, i.e., $V(G[W])=W$ and
$xy\in E(G[W])$ if and only if $x,y\in W$ and $xy\in E(G)$.

We denote the set of \emph{out-neighbors} of $x$ in a graph $G$ by
$N_G(x)\coloneqq\{y\in V(G)\mid xy\in E(G)\}$. Analogously, we write
$N_G^-(x)\coloneqq\{y\in V(G)\mid yx\in E(G)\}$ for the set of in-neighbors
of a vertex $x\in V(G)$. If the context is clear, we omit the subscript
``$_G$''.  Since $G$ is simple, we have $x\notin N(x)$ and
$x\notin N^{-}(x)$. A \emph{source} is a vertex $x$ with in-degree zero,
that is, $N^{-}(x)=\emptyset$, while a vertex with out-degree zero, that
is, $N(x)=\emptyset$, is called \emph{sink}. A graph $G$ is
\emph{sink-free} if $N(x)\ne\emptyset$ for all $x\in V(G)$. Two vertices
$x,y$ are \emph{independent} in $G$ if
$\{x\}\cap N(y)=\{y\}\cap N(x)=\emptyset$, that is, neither $xy\in E(G)$,
nor $yx\in E(G)$. Following the usual convention, we extend the notation
for out-neighbourhood of a single vertex in $V$ to sets of vertices by
setting
\begin{equation}
  N:2^{V}\to 2^V,\quad A\mapsto N(A) \coloneqq\bigcup_{x\in A} N(x).
\end{equation}
For simplicity we write $N(x)$ instead of $N(\{x\})$. By construction,
$A\subseteq B$ implies $N(A)\subseteq N(B)$, i.e., $N$ is an \emph{isotonic
  map}~\cite{Hammer:64}.

A vertex coloring is a map $\sigma\colon V(G)\to S$, where $S$ is a finite
set with $|S|>1$. The coloring $\sigma$ is proper if $\sigma(x)=\sigma(y)$
implies $xy,yx\notin E(G)$. For every color $s\in S\setminus \{\sigma(x)\}$
we write $N(x,s)$ and ${N}^-(x,s)$ to denote the sets of out-neighbors and
in-neighbors of $x$ with color $s$, respectively. The tuple $(G,\sigma)$
denotes a graph $G$ together with a vertex coloring
$\sigma\colon V(G)\to S$. Moreover, we write $(G,\sigma)[W]$ for the
subgraph of a colored graph induced by $W\subseteq V$ and denote the
restriction of the coloring $\sigma$ to $W$ by $\sigma_W$. We say that
$(G,\sigma)$ is \emph{color-sink-free} if $N(x,s)\ne\emptyset$ for all
$x\in V(G)$ and all $s\in \sigma(V)\setminus\{\sigma(x)\}$. 

\paragraph{Leaf-Colored Rooted Trees}
A tree is an undirected, connected graph that does not contain cycles. We
also write $uv$ for the undirected edges in a tree $T$. Let $T=(V,E)$ be a
tree with root $\rho$ and leaf set $L\coloneqq L(T)\subset V$. The set of
inner vertices of $T$ is $V\setminus L$.  In case $V\neq L$, $\rho$ is
always assumed to be an inner vertex. An edge $e=uv\in E$ is an
\emph{inner} edge of $T$ if $u$ and $v$ are both inner vertices. Otherwise,
it is an \emph{outer} edge. A vertex $u\in V$ is an \emph{ancestor} of
$v\in V$ in $T$ if $u$ lies on the path from $\rho$ to $v$. In this case,
we write $v\preceq_T u$. Whenever we write $uv\in E$ for an edge in $T$ we
assume $v\prec_T u$. A vertex $v$ is a \emph{child} of $u$ if $uv\in E$. We
write $\child_T(u)$ for the set of children of $u$ in $T$. If $v$ is a
child of $u$, then $u$ is the (unique) parent of $v$; we write
$u=\parent_T(v)$. Note that $\parent_T(v)$ is defined for all vertices 
$v\in V\setminus\{\rho\}$.
The subtree of $T$ rooted at $u$ and its leaf set are
denoted by $T(u)$ and $L(T(u))$, respectively. A tree $T$ is
\emph{phylogenetic} if $u\in V(T)$ is either a leaf or $u$ has
$|\child(u)|\ge 2$ children. \emph{All trees appearing in this contribution
  are assumed to be phylogenetic.} The \emph{least common ancestor}
$\lca_{T}(A)$ is the unique $\preceq_T$-smallest vertex that is an ancestor
of all vertices in $A\subseteq V$. For brevity, we write $\lca_{T}(x,y)$
instead of $\lca_{T}(\{x,y\})$. The subtree of $T$ rooted in a vertex $u\in
V$ is denoted by $T(u)$.

Suppressing a vertex $v$ of degree two refers to the operation of
removing $v$ and its two incident edges $uv$ and $vw$ and adding the edge
$uw$.  Following~\cite{Bryant:95}, $T_{L'}$ denotes the
\emph{restriction} of $T$ to a subset $L'\subseteq L(T)$, i.e.,
$T_{L'}$ is obtained from the (unique) minimal subtree of $T$
connecting all leaves in $L'$ by subsequently suppressing all vertices with
degree two except possibly the root $\rho_{T_{L'}}=\lca_T(L')$. Note that
$\rho_{T_{L'}}$ is not necessarily the original root $\rho_T$. The
contraction of an inner edge $e=uv\in E(T)$ is an operation that produces a
new tree $T_e$ with vertex set
$V(T_e)=(V(T)\setminus \{u,v\}) \cup \{w\}$, where the new
vertex $w$ replaces both $u$ and $v$. Thus we have
$\child_{T_e}(w)=\child_T(u)\cup\child_T(v)$ and
$\parent_{T_e}(w)=\parent_T(u)$. We say that $T$ \emph{displays} or
\emph{is a refinement of} a tree $T'$, in symbols $T'\le T$, if $T'$
is obtained from a restriction $T_{L'}$ of $T$ after a (possibly empty) 
sequence of inner edge contraction. We write $T'< T$ for $T'\le T$ and 
$T'\ne T$.

A \emph{leaf coloring} of a tree is a map $\sigma\colon L\to S$ where $S$
is a non-empty set of \emph{colors}.  We consider leaf-colored trees
$(T,\sigma)$ and write $\sigma(L')\coloneqq \{\sigma(v)\mid v\in L'\}$ for
subsets $L'\subseteq L$. We say that $(T',\sigma')$ is displayed by
$(T,\sigma)$ if $T'\le T$ and $\sigma(v)=\sigma'(v)$ for all $v\in L(T')$.

\paragraph{Rooted Triples}
A \emph{(rooted) triple} is a tree on three leaves with two inner vertices.
We write $ab|c$ (or equivalently $ba|c$) for the triple $t$ in which the
root has two children $c$ and $\lca_t(a,b)$.  Accordingly, we say that a
tree $T$ \emph{displays} a triple $ab|c$ if $a,b,c\in L(T)$ are pairwise
distinct and $\lca_T(a,b)\prec_T\lca_T(a,c)=\lca_T(b,c)$.  An edge $uv$ in
a tree $T$ is \emph{distinguished} by a triple $ab|c$ if and only if
$\lca_{T}(a,b)=v$ and $\lca_{T}(\{a,b,c\})=u$.  A set $\mathscr{R}$ of
triples is \emph{consistent} if there is a tree displaying all triples in
$\mathscr{R}$.  Given a consistent set $\mathscr{R}$ of triples defined on
a leaf set $L$, the algorithm \texttt{BUILD} returns a tree on $L$, denoted
by $\build(\mathscr{R}, L)$, that displays all triples in
$\mathscr{R}$~\cite{Aho:81}.  We furthermore extend the definition of
consistency to pairs of sets of required and forbidden triples, i.e., we
call a pair $(\mathscr{R}, \mathscr{F})$ of two triple sets
\emph{consistent} if there is a tree $T$ that displays all
triples in $\mathscr{R}$ but none of the triples in $\mathscr{F}$. In this
case we say that $T$ \emph{agrees} with $(\mathscr{R}, \mathscr{F})$. The
restriction of a triple set $\mathscr{R}$ to a set of leaves $L'$ is given
by $\mathscr{R}_{L'}\coloneqq\{ab|c\in\mathscr{R}\mid a,b,c\in L'\}$.

\paragraph{Hierarchies}
Let $X$ be a non-empty finite set. A system $\mathcal{H}\subseteq 2^X$ of
non-empty sets is a \emph{hierarchy} on $X$ if (i) $U,V\in\mathcal{H}$
implies $U\cap V\subseteq \{\emptyset, U, V\}$, (ii) $\{x\}\in\mathcal{H}$
for all $x\in X$, and (iii) $X\in\mathcal{H}$. There is a 1-1
correspondence between hierarchies on $L$ and phylogenetic rooted trees
with leaf set $L$ given by
$\mathcal{H}(T)\coloneqq\{ L(T(v))\mid v\in V(T)\}$, see
~\cite[Theorem~3.5.2]{Semple:03}.  The inverse map is obtained by the Hasse
diagram $T(\mathcal{H})$ of $\mathcal{H}$ with respect to set
inclusion.  A set system $\mathcal{H}\subseteq2^X$ is \emph{hierarchy-like}
if it satisfies condition (i). In this case, $\mathcal{H}$ still
corresponds to a forest. The roots of the constituent trees correspond to
the inclusions-maximal elements of $\mathcal{H}$, and the leaves correspond
to the inclusions-minimal sets, which correspond to the singletons if and
only if (ii) holds.  For simplicity of notation, we will think of the
leaves of $T$ as elements $v\in X$ rather than singleton sets $\{v\}\in2^X$.

\subsection{Best Match Graphs}

\begin{definition}\label{def:bm}
  Let $(T,\sigma)$ be a leaf-colored tree. A leaf $y\in L(T)$ is a
  \emph{best match} of the leaf $x\in L(T)$ if $\sigma(x)\neq\sigma(y)$ and
  $\lca(x,y)\preceq_T \lca(x,y')$ holds for all leaves $y'$ of color
  $\sigma(y')=\sigma(y)$.
  \label{def:BMG}
\end{definition}
Given $(T,\sigma)$, the \emph{best match graph} (BMG) associated to
$(T,\sigma)$ is the digraph $\bmg(T,\sigma) = (V,E)$ with vertex set
$V=L(T)$, vertex coloring $\sigma$, and $xy\in E$ if and only if $y$ is a
best match of $x$ with respect to $(T,\sigma)$~\cite{Geiss:19a}.
\begin{definition}\label{def:BestMatchGraph}
  An arbitrary vertex-colored digraph $(G,\sigma)$ is a \emph{best match
    graph (BMG)} if there exists a leaf-colored tree $(T,\sigma)$ such that
  $(G,\sigma) = \bmg(T,\sigma)$. In this case, we say that $(T,\sigma)$
  \emph{explains} $(G,\sigma)$.
\end{definition}
A BMG $(G,\sigma)$ with vertex set $L$ is an $\ell$-BMG if
$|\sigma(L)|=\ell$. Figure~\ref{fig:qBMG-example} in the introduction
shows an example of a 3-BMG on $6$ vertices, \cite[Figure~7]{Korchmaros:21a} 
gives an example of a 2-BMG on $10$ vertices 
and~\cite[Figure~2(a)]{Korchmaros:21a} gives an example of a 2-BMG on $11$ 
vertices. Since BMGs are properly
colored, every 2-BMG is bipartite. Furthermore, the subgraph of a BMG
induced by the subset $L_{rs}\coloneqq\left\{x\in L\mid
\sigma(x)\in\{r,s\}\right\}$ of $L$, i.e., the vertices with two
colors, is always a 2-BMG~\cite{Geiss:19a}.

Now, we briefly review characterizations of BMGs that will be of
relevance for this contribution. 
\begin{proposition}[{\cite[Theorem~4]{Geiss:19a}} and~\cite{Schaller:21b}]
  \label{prop:char2-BMG}
  A properly two-colored graph $(G,\sigma)$ is a 2-BMG if and only if it is
  (i) sink-free and (ii) the out-neighborhoods satisfy the following three
  conditions:
  \begin{itemize} \label{Ns}
    \item[\AX{(N1)}] $x\notin N(y)$ and $y\notin N(x)$ implies
    $N(x)\cap N(N(y))=N(y)\cap N(N(x))=\emptyset$.
    \item[\AX{(N2)}] $N(N(N(x))) \subseteq N(x)$.
    \item[\AX{(N3')}] If $x\notin N(N(y))$, $y\notin N(N(x))$, and
    $N(x)\cap N(y)\neq \emptyset$ then we have
    $N^-(x)= N^-(y)$ and one of the inclusions
    $N(x)\subseteq N(y)$ or $N(y)\subseteq N(x)$.
  \end{itemize}
\end{proposition}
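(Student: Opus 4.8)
Since the statement is quoted from \cite{Geiss:19a} and \cite{Schaller:21b}, the quickest route is to cite those papers; what follows is the strategy I would use to prove it from scratch, treating the two directions of the equivalence separately.

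\textbf{The ``$2$-BMG $\Rightarrow$ (i),(ii)'' direction.} I would start from the standard description of out-neighbourhoods in a two-colored BMG. Fix a tree $(T,\sigma)$ with $\bmg(T,\sigma)=(G,\sigma)$ and $\sigma(L(T))=\{r,s\}$; for a leaf $x$ set $v_x\coloneqq\lca_T(\{x\}\cup N(x))$ and let $c_x$ be the child of $v_x$ on the path to $x$. Definition~\ref{def:bm} then forces $N(x)$ to consist of exactly the leaves of color $\ne\sigma(x)$ that occur in $T(v_x)$, none of which lies in $T(c_x)$, and $N(x)\ne\emptyset$ since both colors are present; the last point is sink-freeness. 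Each of (N1), (N2), (N3') now becomes a statement about the mutual position of the vertices $v_{\bullet}$ and the subtrees $T(v_{\bullet})$: independence of $x$ and $y$ makes $T(v_x)$ and $T(v_y)$ leaf-disjoint, giving (N1); applying the description three times keeps one inside $T(v_x)$, giving $N(N(N(x)))\subseteq N(x)$; and the hypotheses of (N3') force $v_x$ and $v_y$ to be $\preceq_T$-comparable and to lie on the same side of their common parent, which yields both $N^-(x)=N^-(y)$ and one of the inclusions $N(x)\subseteq N(y)$, $N(y)\subseteq N(x)$. These are routine case distinctions on $T$ which I would not write out in full.

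\textbf{The ``(i),(ii) $\Rightarrow$ $2$-BMG'' direction.} This is the substance. Assume $(G,\sigma)$ is properly $2$-colored, sink-free and satisfies (N1)--(N3'); the task is to build an explaining tree, and I would follow the triple-reconstruction template used for BMGs in the cited papers. Associate to $(G,\sigma)$ the set of informative triples
\[
  \mathscr{R}(G,\sigma)\coloneqq\bigl\{\,xz|z' \;:\; \sigma(z)=\sigma(z')\neq\sigma(x),\ z\in N(x),\ z'\notin N(x)\,\bigr\};
\]
the description above shows that any explaining tree displays all of $\mathscr{R}(G,\sigma)$ (indeed, in such a tree $\lca(x,z)=v_x\prec\lca(x,z')=\lca(z,z')$). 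One then uses (N1)--(N3') to show that $\mathscr{R}(G,\sigma)$ is consistent, sets $T^{\ast}\coloneqq\build(\mathscr{R}(G,\sigma),L)$, and verifies $\bmg(T^{\ast},\sigma)=(G,\sigma)$. An equivalent, more hands-on variant is a top-down recursion: (N3') is used to partition $L$ into the blocks that will become the leaf sets of the subtrees below the root, and one recurses on the induced subgraphs, the role of (N3') --- with its coupling of $N^-$-equality and neighbourhood nesting --- being precisely to make the block partition compatible with both color classes at once.

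\textbf{Where the difficulty lies.} In either formulation the crux is to show that the reconstructed tree reproduces $G$ \emph{exactly}: no best match of $(G,\sigma)$ may be lost, and --- more delicately --- no spurious edge may be created, since the informative triples alone leave \texttt{BUILD} free to return an over-resolved tree whose BMG strictly contains $G$. Sink-freeness and (N2) are easy, (N1) controls the cross-block edges, and it is the interplay of (N1) with the \emph{refined} condition (N3') of \cite{Schaller:21b} --- the reason the characterization holds with (N3') in place of the weaker (N3) of \cite{Geiss:19a} --- that I expect to demand the most care.
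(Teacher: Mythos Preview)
The paper does not prove this proposition; it is quoted as a known result from the cited references and used as background, so there is no in-paper proof to compare against. Your opening remark that the quickest route is simply to cite those papers is exactly what the paper does.

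Your from-scratch sketch is broadly reasonable, but two comments are in order. First, the reconstruction in \cite{Geiss:19a} (and, in generalized form, in Section~\ref{sect:2qBMG} of this paper) does \emph{not} go via \texttt{BUILD} on the informative triples; it proceeds by showing directly from \AX{(N1)}--\AX{(N3')} that the extended reachable sets $R'(x)=N(x)\cup N(N(x))\cup Q(x)$ form a hierarchy-like system, and that the corresponding Hasse tree explains $(G,\sigma)$. Your triple-based route is a legitimate alternative (it is essentially the viewpoint of \cite{Schaller:21d}), but the sentence ``one then uses \AX{(N1)}--\AX{(N3')} to show that $\mathscr{R}(G,\sigma)$ is consistent'' is where all the work hides, and you give no indication of how it goes; in practice that step is no easier than the hierarchy construction, and your worry about \texttt{BUILD} producing spurious edges is precisely the point that needs an argument, not just a flag. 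Second, a minor historical slip: \AX{(N3')} is the original condition from \cite{Geiss:19a}; \AX{(N3)} is the later simplification (equivalent in the presence of \AX{(N1)} and \AX{(N2)}, cf.\ Lemma~\ref{lem:N1N2N3-old-eq-new}), not the other way round.
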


Properties \AX{(N1)} and \AX{(N2)} may be rephrased as
in~\cite{Korchmaros:21a,Korchmaros:20b}:
\begin{itemize} 
  \item[\AX{(N1)}] If $x$ and $y$ are independent vertices (or $x=y$),
  then there exist no vertices $w$ and $t$ such that $xt, yw, tw \in E(G)$.
  \item[\AX{(N2)}] If there are vertices $x_1,x_2,y_1,y_2\in L$ with
  $x_1 y_1, y_1 x_2, x_2 y_2 \in E(G)$, then $x_1 y_2 \in E(G)$.
\end{itemize}
These two properties imply that some directed edges must be present in
a 2-BMG; see~\cite[Figure~1]{Korchmaros:21a} and
\cite[Figure~6]{Korchmaros:21a} for the edges required by \AX{(N2)} and
the counterpart of \AX{(N1)}, respectively. Property \AX{(N2)} was
introduced as \emph{bi-transitivity} in ~\cite{Das:21} and investigated in
relation to topological orderings in
\cite{Korchmaros:21a,Korchmaros:20b}. It suffices to require \AX{(N1)} for
$x\ne y$ because, for $x=y$, $N(x)\cap N(N(x))=\emptyset$ is an
immediate consequence of the fact that $G$ is assumed to be bipartite.
Nevertheless, we define \AX{(N1)} here for all $x,y\in V(G)$. We shall
see below in Theorem~\ref{thm:bipartite} that this choice of the axiom
allows us to remove the explicit requirement that $G$ is bipartite.

Properties \AX{(N1)}, \AX{(N2)} and \AX{(N3')} were translated into a
system of forbidden induced subgraphs of a 2-BMG;
see~\cite{Schaller:21b}. In particular, the first part of \AX{(N3')}, i.e.,
the implication $x\notin N(N(y))$, $y\notin N(N(x))$ and $N(x)\cap N(y)\neq
\emptyset \implies N^-(x)= N^-(y)$, is already covered by \AX{(N1)}. This
makes it possible to replace \AX{(N3')} by the simpler condition
\begin{itemize}
  \item[(N3)] If $N(x)\cap N(y)\ne\emptyset$ then $N(x)\subseteq N(y)$ or
  $N(y)\subseteq N(x)$.
\end{itemize} 
For completeness, we prove directly that \AX{(N3')} may be replaced by 
\AX{(N3)}.
\begin{lemma}
  \label{lem:N1N2N3-old-eq-new}
  A properly two-colored graph $(G,\sigma)$ satisfies \AX{(N1)}, \AX{(N2)},
  and \AX{(N3')} if and only if it satisfies \AX{(N1)}, \AX{(N2)}, and
  \AX{(N3)}.
\end{lemma}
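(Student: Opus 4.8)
The plan is to handle the two implications separately; in each, \AX{(N1)} and \AX{(N2)} transfer verbatim, so it suffices to derive \AX{(N3')} assuming \AX{(N1)}, \AX{(N2)}, \AX{(N3)}, and conversely to derive \AX{(N3)} assuming \AX{(N1)}, \AX{(N2)}, \AX{(N3')}. In both cases I may assume $x\neq y$, since \AX{(N3)} and \AX{(N3')} are immediate otherwise.

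For ``\AX{(N3)}$\,\Rightarrow\,$\AX{(N3')}'', I would assume the three premises of \AX{(N3')}, namely $x\notin N(N(y))$, $y\notin N(N(x))$, and $N(x)\cap N(y)\neq\emptyset$. Condition \AX{(N3)} then at once provides one of the inclusions $N(x)\subseteq N(y)$ or $N(y)\subseteq N(x)$, so only $N^-(x)=N^-(y)$ is left to prove --- and this is exactly the observation already noted in the text that the first half of \AX{(N3')} follows from \AX{(N1)} alone. The argument: fix $v\in N(x)\cap N(y)$ and any $w\in N^-(x)$, so $wx, xv, yv\in E$. If $yw\in E$ then $x\in N(w)\subseteq N(N(y))$, contradicting the premise, so $yw\notin E$; and if also $wy\notin E$, then $(w,y)$ is an independent pair, whence \AX{(N1)} forbids the configuration $wx,\,yv,\,xv\in E$ which is nonetheless present. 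Hence $wy\in E$, i.e.\ $w\in N^-(y)$; the reverse inclusion is symmetric, giving $N^-(x)=N^-(y)$ and thus \AX{(N3')}.

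For ``\AX{(N3')}$\,\Rightarrow\,$\AX{(N3)}'', suppose $N(x)\cap N(y)\neq\emptyset$. If $x\notin N(N(y))$ and $y\notin N(N(x))$, then \AX{(N3')} applies directly. Otherwise assume (up to swapping $x$ and $y$) that $y\in N(N(x))$, and pick $z$ with $xz,\,zy\in E$. Since $\sigma(z)$ differs from both $\sigma(x)$ and $\sigma(y)$ while only two colors occur, $\sigma(x)=\sigma(y)$, so $x\notin N(y)$ and in particular $w\neq x$ for every $w\in N(y)$. For each such $w$ we have $xz,\,zy,\,yw\in E$, so bitransitivity \AX{(N2)} yields $xw\in E$; therefore $N(y)\subseteq N(x)$. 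In every case one of the two inclusions holds, so \AX{(N3)} follows.

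The only mildly delicate point is the $N^-$-part of the first direction, where the precise (unrestricted, $x=y$-allowing) formulation of \AX{(N1)} is used; the rest is a one-line application of \AX{(N2)} or \AX{(N3')}, and I do not expect any real obstacle. It is worth noting that in the converse the hypothesis $N(x)\cap N(y)\neq\emptyset$ is needed only in the sub-case where \AX{(N3')} is invoked; in the remaining sub-cases the inclusion comes purely from the existence of a directed $2$-path together with bitransitivity.
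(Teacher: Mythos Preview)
Your proof is correct and follows essentially the same approach as the paper: both use \AX{(N2)} (bitransitivity along a directed $2$-path) to derive \AX{(N3)} from \AX{(N3')}, and \AX{(N1)} applied to an independent pair to obtain the $N^-(x)=N^-(y)$ part of \AX{(N3')} from \AX{(N3)}. The paper phrases both directions as proofs by contradiction whereas you argue directly, but the underlying configurations and case splits are identical.
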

\begin{proof}
  Suppose $(G,\sigma)$ satisfies \AX{(N1)}, \AX{(N2)}, and \AX{(N3')} and
  assume, for contradiction, that \AX{(N3)} is violated, i.e., that there
  are vertices $x,y\in V(G)$ with $N(x)\cap N(y)\ne\emptyset$ such that
  neither $N(x)\subseteq N(y)$ nor $N(y)\subseteq N(x)$ is true.  Hence,
  $x\neq y$.  Thus, there are distinct vertices $u\in N(x)\setminus N(y)$,
  $v\in N(y)\setminus N(x)$. By assumption, there exists $z\in N(x)\cap
  N(y)$.  Since $(G,\sigma)$ is properly colored,
  $\sigma(x)=\sigma(y)\ne\sigma(z)=\sigma(u)=\sigma(v)$, and $x$, $y$, $z$,
  $u$, and $v$ are pairwise distinct.  If $x\notin N(N(y))$ and $y\notin
  N(N(x))$, then we immediately obtain a contradiction to \AX{(N3')}.
  Hence, assume that there exists $w\in V(G)$ such that $xw,wy\in E(G)$,
  i.e., $y\in N(N(x))$.  Since $w\in N(x)$, $v\notin N(x)$ and $(G,\sigma)$
  is properly colored, we have $w\notin \{x,y,v\}$.  However, $xw,wy,yv \in
  E(G)$ together with \AX{(N2)} imply that $xv\in E(G)$ and thus, $v\in
  N(x)$; a contradiction.  An analogous contradiction is obtained for that
  case $x\in N(N(y))$.
  
  Now suppose $(G,\sigma)$ satisfies \AX{(N1)}, \AX{(N2)}, and \AX{(N3)}
  and assume, for contradiction, that $(G,\sigma)$ does not satisfy
  \AX{(N3')}.  Hence, there are two vertices $x,y\in V(G)$ with
  $x\notin N(N(y))$, $y\notin N(N(x))$, and $N(x)\cap N(y)\ne\emptyset$
  such that (i) neither of the inclusions $N(x)\subseteq N(y)$ and
  $N(y)\subseteq N(x)$ holds, or (ii) $ N^-(x)\ne N^-(y)$.  Case~(i)
  immediately contradicts \AX{(N3)}. In case~(ii), i.e.,
  $ N^-(x)\ne N^-(y)$, we have $x\ne y$ and we can assume that
  there exists $w\in V(G)$ with $wx\in E(G)$ and $wy\notin E(G)$.  We also
  have $yw\notin E(G)$ since otherwise $yw,wx\in E(G)$ would contradict
  $x\notin N(N(y))$. If $w=y$ then $x$ and $y$ are adjacent, and thus
  $N(x)\cap N(y)=\emptyset$, contradicting our assumption.  Hence, $y$ and
  $w$ are independent in $G$.  From $N(x)\cap N(y)\ne\emptyset$, it follows
  that $x$ and $y$ have a common out-neighbor $z$.  Since $(G,\sigma)$ is
  properly colored and $yz\in E(G)$ but $yw\notin E(G)$, we can infer that
  that the four vertices $x$, $y$, $z$, and $w$ are pairwise distinct.  In
  summary, $(G,\sigma)$ contains the two independent vertices $w$ and $y$
  and the two vertices $x$ and $z$ with $wx,yz,xz\in E(G)$; this
  contradicts \AX{(N1)}.  Hence, we conclude that $(G,\sigma)$ also
  satisfies \AX{(N3')}.
\end{proof}
We remark that \AX{(N1)} together with \AX{(N3)} implies \AX{(N3')}, but
the converse is not true in general unless \AX{(N2)} is assumed.  We note
that Condition \AX{(N3)} may be written also as $N(x)\cap
N(y)\in\{\emptyset, N(x), N(y)\}$, i.e.\ the out-neighborhoods form a
hierarchy-like set system on $V(G)$. It is not a hierarchy because
$N(x)\ne V(G)$ for all $x\in V(G)$ since $(G,\sigma)$ is bipartite.

In~\cite{Schaller:21b,Schaller:21c,Schaller:21d}, BMGs have been
characterized in terms of certain sets of rooted triples that easily can be
constructed for every given vertex-colored digraph.
\begin{definition}\label{def:informative_triples}
  Let $(G,\sigma)$ be a vertex-colored digraph. Then the set of
  \emph{informative triples} is
  \begin{align*}
    \mathscr{R}(G,\sigma) \coloneqq \big\{&
    ab|b' \colon
    \sigma(a)\neq\sigma(b)=\sigma(b'),\,
    ab\in E(G), \text{ and }
    ab'\notin E(G) \big\},
    \intertext{and the set of \emph{forbidden triples} is}
    \mathscr{F}(G,\sigma) \coloneqq \big\{
    &ab|b' \colon
    \sigma(a)\neq\sigma(b)=\sigma(b'),\,
    b\ne b',\, \text{ and }
    ab,ab'\in E(G) \big\}.
    \intertext{For vertex-colored digraphs that are associated with binary 
      trees, we will furthermore need}
    \Rbin(G,\sigma) \coloneqq
    &\mathscr{R}(G,\sigma) \cup 
    \{bb'|a\colon ab|b'\in \mathscr{F}(G,\sigma)\}
  \end{align*}
\end{definition}
Note that $a,b,b'$ must be pairwise distinct whenever
$ab|b'\in\mathscr{R}(G,\sigma)$, $ab|b'\in\mathscr{F}(G,\sigma)$, or
$bb'|a\in\Rbin(G,\sigma)$.  Moreover, the forbidden triples always come in
pairs, i.e., $ab|b'\in \mathscr{F}(G,\sigma)$ implies $ab'|b\in
\mathscr{F}(G,\sigma)$. The triples in each of the three sets are
2-colored because they refer to ($G,\sigma)[L_{\sigma(a)\sigma(b)}]$, i.e., an 
induced 2-BMG.

\begin{proposition}[{\cite[Lemma~3.4 and Theorem~3.5]{Schaller:21b}}]
  \label{prop:BMG-triple-charac}
  A properly colored graph $(G,\sigma)$ is a BMG if and only if (i) it is
  color-sink-free and (ii) $(\mathscr{R}(G,\sigma),\mathscr{F}(G,\sigma))$
  is consistent.  In this case, every tree $(T,\sigma)$ with leaf set
  $V(G)$ that agrees with $(\mathscr{R}(G,\sigma),\mathscr{F}(G,\sigma))$
  explains $(G,\sigma)$.
\end{proposition}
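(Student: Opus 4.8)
The plan is to prove the necessity of (i) and (ii), then the displayed ``in this case'' statement for an \emph{arbitrary} agreeing tree with leaf set $V(G)$, and finally to derive sufficiency of (i) and (ii) from these two. Throughout I will use only two elementary facts about trees: among the three pairwise least common ancestors of three distinct leaves $p,q,r$ two coincide and are $\succeq_T$ the third, and $T$ displays $pq|r$ precisely when $\lca_T(p,q)\prec_T\lca_T(p,r)=\lca_T(q,r)$.

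For necessity, assume $(G,\sigma)=\bmg(T,\sigma)$, so $V(G)=L(T)$. Color-sink-freeness is immediate: given $x$ and $s\in\sigma(V)\setminus\{\sigma(x)\}$, some leaf of color $s$ realises the $\preceq_T$-minimum of $\lca_T(x,\cdot)$ over all leaves of color $s$, and by Definition~\ref{def:bm} that leaf is an out-neighbor of $x$ of color $s$. For consistency it suffices to check that $(T,\sigma)$ itself agrees with $(\mathscr{R}(G,\sigma),\mathscr{F}(G,\sigma))$. If $ab|b'\in\mathscr{R}(G,\sigma)$, then $ab\in E$ and $ab'\notin E$ say that $b$ is a best match of $a$ while $b'$ is not, which forces $\lca_T(a,b)\prec_T\lca_T(a,b')$; by the three-leaf fact $\lca_T(a,b)$ is then the strict minimum, so $\lca_T(a,b')=\lca_T(b,b')$ and $T$ displays $ab|b'$. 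If $ab|b'\in\mathscr{F}(G,\sigma)$, then $ab,ab'\in E$ make both $b$ and $b'$ best matches of $a$, so $\lca_T(a,b)=\lca_T(a,b')$ and $T$ displays neither $ab|b'$ nor its partner $ab'|b$.

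For the ``in this case'' claim, suppose $(G,\sigma)$ is color-sink-free and $(T,\sigma)$ has leaf set $V(G)$ and agrees with $(\mathscr{R}(G,\sigma),\mathscr{F}(G,\sigma))$; I will show $E(\bmg(T,\sigma))=E(G)$ on pairs of distinct, differently colored vertices. If $b$ is a best match of $a$ in $T$ but $ab\notin E(G)$, color-sink-freeness gives $b'\ne b$ with $ab'\in E(G)$ and $\sigma(b')=\sigma(b)$; then $ab'|b\in\mathscr{R}(G,\sigma)$, so $T$ displays $ab'|b$, i.e.\ $\lca_T(a,b')\prec_T\lca_T(a,b)$, contradicting that $b$ is a best match. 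Conversely, if $ab\in E(G)$ but $b$ is not a best match of $a$ in $T$, choose a best match $b'$ of $a$ of color $\sigma(b)$; then $\lca_T(a,b')\prec_T\lca_T(a,b)$ and hence $b'\ne b$, and if $ab'\notin E(G)$ then $ab|b'\in\mathscr{R}(G,\sigma)$ would force $\lca_T(a,b)\prec_T\lca_T(a,b')$, a contradiction. So $ab'\in E(G)$, whence $ab'|b\in\mathscr{F}(G,\sigma)$; but $\lca_T(a,b')\prec_T\lca_T(a,b)$ forces $\lca_T(a,b)=\lca_T(b,b')$, so $T$ displays $ab'|b$, contradicting agreement. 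Thus $\bmg(T,\sigma)=(G,\sigma)$.

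Finally, sufficiency: assuming (i) and (ii), the consistency of the pair provides a leaf-colored tree agreeing with $(\mathscr{R}(G,\sigma),\mathscr{F}(G,\sigma))$, from which one must extract a tree $(T,\sigma)$ with leaf set \emph{exactly} $V(G)$ that still agrees with the pair; the ``in this case'' claim then yields $\bmg(T,\sigma)=(G,\sigma)$. The first three parts are routine consequences of Definition~\ref{def:bm} and the three-leaf dichotomy; I expect the main obstacle to be this last extraction. Concretely, one wants $\build(\mathscr{R}(G,\sigma),V(G))$ — which by construction has leaf set $V(G)$ and displays all of $\mathscr{R}(G,\sigma)$ — to display none of $\mathscr{F}(G,\sigma)$, and proving this seems to require both the special structure of the triple sets attached to a colored digraph (forbidden triples occur in symmetric pairs $ab|b'$, $ab'|b$ with $\sigma(b)=\sigma(b')$) and the fact that consistency is assumed for the \emph{pair}, not merely for $\mathscr{R}(G,\sigma)$; this is the one place where more than elementary tree combinatorics, namely properties of \texttt{BUILD}, comes in.
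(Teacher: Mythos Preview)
The proposition is quoted from \cite{Schaller:21b} and not re-proved in this paper, so there is no in-paper proof to compare against directly. However, the paper does prove the qBMG generalization (Lemma~\ref{lem:qBMG-inf-forb-triples} and Theorem~\ref{thm:qBMG-via-triples}), and those arguments specialize under the trivial truncation map to essentially your proof of necessity and of the ``in this case'' claim. Your treatment of those parts is correct and matches the paper's approach.

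Your one self-identified gap --- extracting from consistency a tree with leaf set exactly $V(G)$ --- is real but much easier to close than you suggest. You do not need \texttt{BUILD} or any analysis of how it interacts with forbidden triples. Consistency of the pair gives some tree $T'$ agreeing with $(\mathscr{R},\mathscr{F})$; its leaf set contains every leaf occurring in any triple of $\mathscr{R}\cup\mathscr{F}$, and restricting $T'$ to those leaves preserves agreement. Any vertex of $V(G)$ not appearing in $\mathscr{R}\cup\mathscr{F}$ can then be attached as a child of the root: this cannot destroy a displayed informative triple, and it cannot create a displayed forbidden triple, since no triple in $\mathscr{F}$ involves such a vertex. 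The paper's proof of Theorem~\ref{thm:qBMG-via-triples} dispatches this in one line (``From $L_{\mathscr{R}\cup\mathscr{F}}\subseteq L$, at least one tree with leaf set $L$ \ldots''), and one could equally invoke \texttt{MTT}, which takes $L$ as input. Your proposed route through $\build(\mathscr{R}(G,\sigma),V(G))$ does work for BMGs --- this is precisely \cite[Theorem~1]{Schaller:21d}, cited later in the paper --- but it is a substantially harder theorem than what you need here, and as Figure~\ref{fig:BUILD-fails} illustrates, that route fails in the qBMG setting.
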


\emph{Binary-explainable} BMGs are the vertex-colored digraphs explained by 
binary trees.
\begin{proposition}[{\cite[Theorem~8]{Schaller:21c}}]
  A properly colored graph $(G,\sigma)$ is a binary-explainable BMG if and only 
  if (i) it is color-sink-free and (ii) $\Rbin\coloneqq \Rbin(G,\sigma)$ is 
  consistent.
  In this case, the BMG $(G,\sigma)$ is explained by every refinement of the 
  leaf-colored tree $(\build(\Rbin, V(G)), \sigma)$.
\end{proposition}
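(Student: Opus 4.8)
The plan is to reduce the statement to the triple-based characterization of BMGs in Proposition~\ref{prop:BMG-triple-charac}. The only genuinely new ingredient is that a \emph{binary} explaining tree is forced to display the triple $bb'|a$ whenever a vertex $a$ has two distinct out-neighbors $b,b'$ with $\sigma(b)=\sigma(b')$; these are exactly the triples adjoined to $\mathscr{R}(G,\sigma)$ in the definition of $\Rbin(G,\sigma)$.

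\emph{Forward direction.} Suppose $(G,\sigma)=\bmg(T,\sigma)$ for a binary leaf-colored tree $(T,\sigma)$ with $L(T)=V(G)$. Color-sink-freeness is inherited because every BMG is color-sink-free (Proposition~\ref{prop:BMG-triple-charac}(i)). To show $\Rbin$ is consistent I would verify that $T$ itself displays $\Rbin$. For an informative triple $ab|b'\in\mathscr{R}(G,\sigma)$ we have $ab\in E(G)$, $ab'\notin E(G)$ and $\sigma(b)=\sigma(b')$, so $b$ is a best match of $a$ while $b'$ is not; hence $\lca_T(a,b)\prec_T\lca_T(a,b')$, and a short lowest-common-ancestor computation upgrades this to $\lca_T(a,b)\prec_T\lca_T(a,b')=\lca_T(b,b')$, i.e., $T$ displays $ab|b'$. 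For a forbidden triple $ab|b'\in\mathscr{F}(G,\sigma)$, both $b$ and $b'$ are best matches of $a$ of the same color, so $v\coloneqq\lca_T(a,b)=\lca_T(a,b')$; since $T$ is binary, $v$ has exactly two children, $a$ lies in one of the corresponding subtrees, and therefore both $b$ and $b'$ lie in the other, whence $\lca_T(b,b')\prec_T v=\lca_T(a,b)=\lca_T(a,b,b')$, i.e., $T$ displays $bb'|a$. Thus $T$ displays all of $\Rbin$, so $\Rbin$ is consistent.

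\emph{Converse.} Assume $(G,\sigma)$ is color-sink-free and $\Rbin$ is consistent, and set $T^{*}\coloneqq\build(\Rbin,V(G))$, which displays every triple in $\Rbin$. I would first note that $(\mathscr{R}(G,\sigma),\mathscr{F}(G,\sigma))$ is consistent: any tree displaying $\Rbin$ displays $\mathscr{R}(G,\sigma)$ (as $\mathscr{R}\subseteq\Rbin$) and, for each $ab|b'\in\mathscr{F}$, displays $bb'|a$; since a tree displays at most one of the three triples on $\{a,b,b'\}$, and forbidden triples come in pairs, such a tree displays none of the forbidden triples, i.e., it agrees with $(\mathscr{R},\mathscr{F})$. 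Proposition~\ref{prop:BMG-triple-charac} then yields that $(G,\sigma)$ is a BMG and that every tree with leaf set $V(G)$ agreeing with $(\mathscr{R},\mathscr{F})$ explains $(G,\sigma)$. Now let $T'$ be any refinement of $T^{*}$ (so $L(T')=V(G)$); since refinements preserve displayed triples, $T'$ still displays all of $\Rbin$, hence agrees with $(\mathscr{R},\mathscr{F})$ by the previous step, hence $(T',\sigma)$ explains $(G,\sigma)$ — which is the final assertion. Choosing $T'$ to be an arbitrary binary refinement of $T^{*}$ (one always exists) shows in particular that $(G,\sigma)$ is binary-explainable.

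\emph{Main obstacle.} The delicate point — and the reason the triples $bb'|a$ must appear in $\Rbin$ as \emph{positively required} triples rather than being dealt with like forbidden triples — is the asymmetry between refinement and displayed triples: passing to a refinement can only add displayed triples, so it preserves ``displays $bb'|a$'' but not ``does not display $ab|b'$''. Hence the binarity argument in the forward direction must produce the positively displayed triple $bb'|a$, and in the converse one must run \texttt{BUILD} on $\Rbin$ rather than merely on $\mathscr{R}$ or on checking consistency of $(\mathscr{R},\mathscr{F})$ alone. The one classical fact I would import without reproving is that any tree explaining a BMG displays that BMG's informative triples; the forbidden-triple/binarity step is the new part and should be executed with care about which leaf sits on which side of a binary lowest common ancestor.
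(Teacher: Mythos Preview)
Your proposal is correct and follows essentially the same approach that the paper (and the cited source) uses: in the paper this proposition is quoted without proof from~\cite{Schaller:21c}, but the identical strategy is spelled out in the proof of Theorem~\ref{thm:be-qBMG} for the qBMG analogue---namely, in the forward direction use binarity at $\lca_T(a,b)=\lca_T(a,b')$ to force the triple $bb'|a$, and in the converse observe that any refinement of $\build(\Rbin,V(G))$ displays $\Rbin\supseteq\mathscr{R}$ and hence cannot display any $ab|b'\in\mathscr{F}$, so Proposition~\ref{prop:BMG-triple-charac} applies. Your discussion of the ``main obstacle'' (why $bb'|a$ must be a required rather than a forbidden triple, because refinements can only add displayed triples) is exactly the point.
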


\section{Basic Properties of Quasi-Best Match Graphs}\label{sec:quasi-bmg}

We generalize the notion of best match graphs to quasi-best match graphs in 
order to address the limitations of methods to detect significant sequence 
similarities. To this end we introduce the notion of a ``relative detection 
limit'' beyond which best matches remain undetected and thus are assumed to be 
absent in the graphs of interest.

\begin{definition}\label{def:qbm} 
  Let $(T,\sigma)$ be a leaf-colored tree with vertex set $V$, leaf set $L$
  and $\sigma(L) \subseteq S$. A \emph{truncation map} $u_T\colon L\times
  S\to V$ assigns to every leaf $x\in L$ and color $s\in S$ a vertex of $T$
  such that $u_T(x,s)$ lies along the unique path from $\rho_T$ to $x$ and
  $u_T(x,\sigma(x))=x$.  A leaf $y \in L$ with color $\sigma(y)$ is a
  \emph{quasi-best match for $x\in L$} (with respect to $(T,\sigma)$
  and $u_T$) if both conditions (i) and (ii) are satisfied:
  \begin{itemize}
    \item[(i)] $y$ is a \emph{best match} of $x$ in $(T,\sigma)$.
    \item[(ii)] $\lca_T(x,y) \preceq u_T(x,\sigma(y))$.
  \end{itemize}
  The vertex-colored digraph $\qbmg(T,\sigma,u_T)$ on the vertex set $L$
  whose edges are defined by the quasi-best matches is the \emph{quasi-best
    match graph} (qBMG) of $(T,\sigma,u_T)$.
\end{definition}
Note that, since $xy$ is an edge only if $\sigma(x)\ne\sigma(y)$,
$\qbmg(T,\sigma,u_T)$ is always properly colored. Furthermore, if
$u_T(x,s)=x$, then $x$ has no out-neighbors of color $s\in S$ in
$\qbmg(T,\sigma,u_T)$, but  the converse is not true in general.

\begin{definition} \label{quasibmgDef} A vertex-colored digraph
  $(G,\sigma)$ with vertex set $V(G)=L$ is a \emph{(colored) quasi-best
    match graph} (qBMG) if there is a leaf-colored tree $(T,\sigma)$ and a
  truncation map $u_T$ on $(T,\sigma)$ such that
  $(G,\sigma)=\qbmg(T,\sigma,u_T)$.
\end{definition}

In analogy to BMGs, a qBMG $(G,\sigma)$ with vertex set $L$ is an
$\ell$-qBMG if $|\sigma(L)|=\ell$.  The \emph{trivial} truncation map is
defined as $u^{\rho}_T(x,s)\coloneqq \rho_T$ for all $x\in L(T)$ and all
$s\in S\setminus\{\sigma(x)\}$. In this case, condition (ii) becomes void so 
that the trivial truncation map has no
effect. We state this fact as a corollary.

\begin{corollary}\label{cor:trivialTmap}
  Let $(T,\sigma)$ be a leaf-colored tree and $u^{\rho}_T$ be the trivial
  truncation map.  Then $\bmg(T,\sigma) = \qbmg(T,\sigma,u^{\rho}_T)$, that
  is, $xy$ is a quasi-best match with respect to
  $(T,\sigma,u^{\rho}_T)$ if and only if it is a best match with respect to 
  $(T,\sigma)$. In particular, every BMG is a qBMG.
\end{corollary}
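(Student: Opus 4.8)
The statement to establish is that $\bmg(T,\sigma) = \qbmg(T,\sigma,u^{\rho}_T)$ for the trivial truncation map $u^{\rho}_T$, from which the final sentence (every BMG is a qBMG) follows immediately by Definitions~\ref{def:BestMatchGraph} and~\ref{quasibmgDef}. My plan is to argue that condition~(ii) in Definition~\ref{def:qbm} is vacuously satisfied when $u_T = u^{\rho}_T$, so the edge sets of the two graphs coincide.

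\medskip

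First I would fix a leaf-colored tree $(T,\sigma)$ with root $\rho_T$ and verify that $u^{\rho}_T$ is indeed a legitimate truncation map in the sense of Definition~\ref{def:qbm}: for every leaf $x$ and color $s \in S$, the vertex $u^{\rho}_T(x,s) = \rho_T$ must lie on the path from $\rho_T$ to $x$ (it is the first vertex of that path, hence trivially on it), and one checks the normalization $u^{\rho}_T(x,\sigma(x)) = x$; strictly, the trivial map as stated only assigns $\rho_T$ for $s \neq \sigma(x)$, and for $s = \sigma(x)$ it is forced to be $x$, so there is nothing to check beyond noting this convention. Next, I would take an arbitrary pair of leaves $x,y$ and observe that $xy \in E(\qbmg(T,\sigma,u^{\rho}_T))$ iff $y$ is a best match of $x$ in $(T,\sigma)$ \emph{and} $\lca_T(x,y) \preceq_T u^{\rho}_T(x,\sigma(y)) = \rho_T$. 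Since $\rho_T = \lca_T(L(T))$ is the $\preceq_T$-maximum vertex of $T$, the inequality $\lca_T(x,y) \preceq_T \rho_T$ holds for every pair $x,y$; hence condition~(ii) is always satisfied and the edge $xy$ is present iff condition~(i) holds, i.e.\ iff $xy \in E(\bmg(T,\sigma))$ by Definition~\ref{def:bm}. Both graphs have vertex set $L(T)$ and coloring $\sigma$, so they are equal.

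\medskip

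Finally, the ``in particular'' clause: given any BMG $(G,\sigma)$, by Definition~\ref{def:BestMatchGraph} there is a leaf-colored tree $(T,\sigma)$ with $(G,\sigma) = \bmg(T,\sigma)$; pairing this tree with $u^{\rho}_T$ and applying the equality just shown yields $(G,\sigma) = \qbmg(T,\sigma,u^{\rho}_T)$, so $(G,\sigma)$ is a qBMG by Definition~\ref{quasibmgDef}.

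\medskip

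There is essentially no obstacle here: the only point requiring a moment's care is the elementary observation that $\rho_T$ is the $\preceq_T$-maximal vertex and therefore an ancestor of every leaf, so $\lca_T(x,y) \preceq_T \rho_T$ is automatic; everything else is unwinding definitions. The corollary is genuinely a sanity check showing that qBMGs generalize BMGs, and the proof is correspondingly short.
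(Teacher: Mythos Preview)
Your proposal is correct and mirrors exactly what the paper does: the paper simply remarks that for the trivial truncation map condition~(ii) in Definition~\ref{def:qbm} becomes void, and then states the corollary without further proof. Your write-up unwinds this observation in slightly more detail (verifying that $u^{\rho}_T$ is a valid truncation map and that $\lca_T(x,y)\preceq_T\rho_T$ always holds), but the approach is identical.
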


\begin{fact}\label{remark:subgraph}
  If $u_1(x,r)\preceq_T u_2(x,r)$ for all $x\in L$ and $r\in S$, then 
  $\qbmg(T,\sigma,u_1)$ is a subgraph of $\qbmg(T,\sigma,u_2)$.
\end{fact}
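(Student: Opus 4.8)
The plan is to observe that $\qbmg(T,\sigma,u_1)$ and $\qbmg(T,\sigma,u_2)$ have the same vertex set $L=L(T)$ and the same vertex coloring $\sigma$, so the claim reduces to the inclusion $E(\qbmg(T,\sigma,u_1))\subseteq E(\qbmg(T,\sigma,u_2))$. First I would take an arbitrary edge $xy\in E(\qbmg(T,\sigma,u_1))$ and unwind Definition~\ref{def:qbm}. Condition~(i), that $y$ is a best match of $x$ with respect to $(T,\sigma)$, does not refer to any truncation map and hence holds verbatim also for $u_2$. Condition~(ii) yields $\lca_T(x,y)\preceq_T u_1(x,\sigma(y))$. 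Combining this with the hypothesis $u_1(x,\sigma(y))\preceq_T u_2(x,\sigma(y))$ and using that the ancestor relation $\preceq_T$ is a partial order, and in particular transitive, we obtain $\lca_T(x,y)\preceq_T u_2(x,\sigma(y))$, which is exactly condition~(ii) for the truncation map $u_2$. Therefore $y$ is a quasi-best match for $x$ with respect to $(T,\sigma,u_2)$, i.e., $xy\in E(\qbmg(T,\sigma,u_2))$. Since $x$ and $y$ were arbitrary, the desired edge inclusion follows, and hence $\qbmg(T,\sigma,u_1)$ is a subgraph of $\qbmg(T,\sigma,u_2)$.

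There is no genuine obstacle: the entire argument is a matter of unfolding Definition~\ref{def:qbm} together with transitivity of $\preceq_T$. The only point that deserves a brief mention is well-definedness of the comparisons involved: by the definition of a truncation map, both $u_1(x,\sigma(y))$ and $u_2(x,\sigma(y))$ lie on the unique $\rho_T$--$x$ path, and when $xy$ is an edge of either graph so does $\lca_T(x,y)$, so all the $\preceq_T$-relations above are between vertices on a common root-to-leaf path and the chain of inequalities is meaningful. One may also note that Corollary~\ref{cor:trivialTmap} is the special case $u_2=u^{\rho}_T$ of this observation.
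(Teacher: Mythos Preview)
Your argument is correct and is exactly the natural one: unfold Definition~\ref{def:qbm} and use transitivity of $\preceq_T$. The paper in fact states this result as an Observation without proof, so there is nothing to compare against beyond noting that your write-up makes explicit what the authors considered self-evident.
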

This, together with Corollary~\ref{cor:trivialTmap}, shows that every qBMG is a 
subgraph of a BMG.
\begin{lemma}\label{2out_neighbour}
  Let $(\tilde G, \sigma)\coloneqq \bmg(T,\sigma)$ 
  be a BMG. Let $(G,\sigma)\coloneqq \qbmg(T,\sigma,u_T)$ be the qBMG arising 
  from  $(T,\sigma)$ with a truncation map $u_T$. Then, for all $x\in L$ and 
  all colors $s\in \sigma(L)$, we have 
  either $N_{G}(x,s)= N_{\tilde G}(x,s)$ or $N_{G}(x,s)=\emptyset$.
\end{lemma}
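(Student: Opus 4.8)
The plan is to fix a leaf $x\in L$ and a color $s\in\sigma(L)$ and show that if one best match of color $s$ survives the truncation, then they all do. First I would dispose of the trivial cases: if $s=\sigma(x)$ then $N_G(x,s)=N_{\tilde G}(x,s)=\emptyset$ by proper coloring, and if $x$ has no best match of color $s$ in $(T,\sigma)$ at all, then $N_{\tilde G}(x,s)=\emptyset=N_G(x,s)$. So assume $N_{\tilde G}(x,s)\neq\emptyset$ and pick any $y\in N_{\tilde G}(x,s)$, i.e.\ $y$ is a best match of $x$ of color $s$.

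The key observation is that all best matches of $x$ having the same color $s$ share the same lowest common ancestor with $x$. Indeed, by Definition~\ref{def:bm}, if $y$ and $y'$ are both best matches of $x$ with $\sigma(y)=\sigma(y')=s$, then $\lca_T(x,y)\preceq_T\lca_T(x,y')$ and $\lca_T(x,y')\preceq_T\lca_T(x,y)$, hence $\lca_T(x,y)=\lca_T(x,y')=:v_{x,s}$. Now condition (ii) of Definition~\ref{def:qbm} for the pair $x,y$ reads $\lca_T(x,y)\preceq_T u_T(x,s)$, which by the preceding equality is exactly the condition $v_{x,s}\preceq_T u_T(x,s)$ — a statement that does not depend on which particular best match $y$ of color $s$ we chose. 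Therefore either $v_{x,s}\preceq_T u_T(x,s)$, in which case condition (ii) holds simultaneously for every best match of $x$ of color $s$ and so $N_G(x,s)=N_{\tilde G}(x,s)$; or $v_{x,s}\not\preceq_T u_T(x,s)$, in which case condition (ii) fails for every such best match and $N_G(x,s)=\emptyset$.

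I do not expect a genuine obstacle here; the whole statement hinges on the single fact that best matches of a fixed leaf within a fixed color class all have the same $\lca$ with that leaf, which is immediate from the definition of best match. The only care needed is bookkeeping: making explicit that $N_G(x,s)\subseteq N_{\tilde G}(x,s)$ always (since qBMG edges are a subset of BMG edges by Fact~\ref{remark:subgraph} together with Corollary~\ref{cor:trivialTmap}), and that the reverse inclusion holds precisely when the color-independent threshold condition $v_{x,s}\preceq_T u_T(x,s)$ is met. I would phrase the argument so that the dichotomy "threshold met / not met" is visibly the source of the two cases in the statement.
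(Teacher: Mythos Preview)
Your proposal is correct and follows essentially the same approach as the paper: both arguments rest on the observation that all best matches of $x$ of a fixed color $s$ share the same last common ancestor $v_{x,s}$ with $x$, so that condition~(ii) in Definition~\ref{def:qbm} becomes the single color-level test $v_{x,s}\preceq_T u_T(x,s)$, which either passes for all of them or for none. Your write-up is arguably cleaner in making this dichotomy explicit, whereas the paper phrases it as first establishing $N_G(x,s)\subseteq N_{\tilde G}(x,s)$ and then pulling the reverse inclusion from the shared $\lca$; but the content is the same.
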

\begin{proof}
  If $N_{G}(x,s)=\emptyset$, we are done. Hence, assume that
  $N_{G}(x,s)\ne\emptyset$.  From Definition~\ref{def:qbm}(i), every
  $y\in N_{G}(x,s)$ is a best match of $x$, thus
  $N_{G}(x,s) \subseteq N_{\tilde G}(x,s)$.  Moreover, there is a vertex
  $u\in V(T)$ such that $u=\lca_T(x,y)$ for all $y\in N_{G}(x,s)$ and
  $u\prec_T \lca(x,y')$ for all $y'\notin N_{G}(x,s)$ with
  $\sigma(y)=s$. In addition, we have $u\preceq_T u_T(x,s)$ by
  Definition~\ref{def:qbm}(ii).  The latter properties are, in particular, valid
  for every $\tilde y\in N_{\tilde G}(x,s)$, which implies that
  $N_{\tilde G}(x,s)\subseteq N_{G}(x,s)$ and therefore,
  $N_{G}(x,s)= N_{\tilde G}(x,s)$.
\end{proof}

A leaf-colored tree $(T,\sigma)$ explaining a BMG $(G,\sigma)$ is
\emph{least resolved} if there is no tree $T'<T$ such that
$(G,\sigma)=\bmg(T',\sigma)$.  By~\cite[Theorem~8]{Geiss:19a}, every BMG
has a unique least resolved tree (LRT) $\widehat{T}$.  This result can also
be used in the study of qBMGs, since, by the following lemma, every qBMG may
be assumed to arise from the LRT of some BMG by means of a truncation map.
\begin{lemma}
  \label{lem:hatT}
  Let $(T,\sigma)$ be a leaf-labeled tree with truncation map $u$ and let
  $(\widehat{T},\sigma)$ be the LRT of $\bmg(T,\sigma)$. Then there is a
  truncation map $\hat{u}$ on $\widehat{T}$ such that
  $\qbmg(T,\sigma,u)=\qbmg(\widehat{T},\sigma,\hat{u})$.
\end{lemma}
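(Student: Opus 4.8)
The plan is to exploit the fact that the LRT $(\widehat T,\sigma)$ explains the same BMG as $(T,\sigma)$, i.e., $\bmg(\widehat T,\sigma)=\bmg(T,\sigma)=:(\tilde G,\sigma)$, together with Lemma~\ref{2out_neighbour}, which says that a qBMG is obtained from its underlying BMG by deleting, for certain pairs $(x,s)$, the \emph{entire} set of out-neighbors of $x$ with color $s$. Write $(G,\sigma)\coloneqq\qbmg(T,\sigma,u)$; since $V(G)=L(T)=V(\tilde G)=L(\widehat T)$, all the graphs involved share the same vertex set and (proper) coloring, so it only remains to match their edge sets.

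I would define $\hat u$ on $\widehat T$ by a case distinction: put $\hat u(x,\sigma(x))\coloneqq x$, and for a color $s\ne\sigma(x)$ set $\hat u(x,s)\coloneqq\rho_{\widehat T}$ if $N_G(x,s)\ne\emptyset$ and $\hat u(x,s)\coloneqq x$ if $N_G(x,s)=\emptyset$. As $x$ and $\rho_{\widehat T}$ both lie on the path from $\rho_{\widehat T}$ to $x$ and $\hat u(x,\sigma(x))=x$, this is a legitimate truncation map on $\widehat T$. The rationale for the two cases is: when the color-$s$ best matches of $x$ all survive in $G$, one makes condition~(ii) of Definition~\ref{def:qbm} void by truncating at the root; when they are all absent in $G$, one truncates at the leaf $x$ itself, which kills every color-$s$ best match $y$ of $x$ because $\lca_{\widehat T}(x,y)\preceq x$ would force $\lca_{\widehat T}(x,y)=x$, impossible for a leaf $y\ne x$.

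It then remains to check $E(\qbmg(\widehat T,\sigma,\hat u))=E(G)$. Fix $x$ and a color $s\ne\sigma(x)$. Because $\bmg(\widehat T,\sigma)=\tilde G$, the color-$s$ best matches of $x$ in $(\widehat T,\sigma)$ are exactly the vertices of $N_{\tilde G}(x,s)$. If $N_G(x,s)\ne\emptyset$, then $\hat u(x,s)=\rho_{\widehat T}$, so condition~(ii) holds for all of these vertices and $N_{\qbmg(\widehat T,\sigma,\hat u)}(x,s)=N_{\tilde G}(x,s)$; moreover Lemma~\ref{2out_neighbour} gives $N_G(x,s)=N_{\tilde G}(x,s)$ in this case, so the two sets coincide. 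If $N_G(x,s)=\emptyset$, then $\hat u(x,s)=x$ and, as noted above, no color-$s$ best match of $x$ satisfies condition~(ii); hence $N_{\qbmg(\widehat T,\sigma,\hat u)}(x,s)=\emptyset=N_G(x,s)$. Ranging over all $x$ and all $s$ (the case $s=\sigma(x)$ contributes no edges on either side by proper coloring) yields the claimed equality of edge sets.

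I do not anticipate a genuine obstacle; the one point needing a moment of care is that the "absent" case is also correct when $N_{\tilde G}(x,s)=\emptyset$ (there are no relevant best matches at all), but then $\hat u(x,s)=x$ is harmless since there is nothing to delete. The crux is simply to recognize that Lemma~\ref{2out_neighbour} reduces the task to choosing, for each pair $(x,s)$, between "keep everything" and "keep nothing", both of which are realizable on $\widehat T$ by truncating at $\rho_{\widehat T}$ or at $x$, respectively.
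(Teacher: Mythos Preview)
Your proposal is correct and follows essentially the same approach as the paper: define $\hat u(x,s)$ to be $\rho_{\widehat T}$ or $x$ according to whether $N_G(x,s)$ is nonempty, and then use Lemma~\ref{2out_neighbour} together with $\bmg(\widehat T,\sigma)=\bmg(T,\sigma)$ to conclude that the out-neighborhoods coincide. Your write-up is in fact a bit more careful than the paper's (you explicitly verify that $\hat u$ is a valid truncation map and handle $s=\sigma(x)$ separately), but the argument is the same.
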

\begin{proof}
  Put $(\tilde G, \sigma)\coloneqq \bmg(T,\sigma)$ and
  $(G,\sigma)=\qbmg(T,\sigma,u)$.  Then, by definition,
  $(\tilde G, \sigma) = \bmg(\widehat T,\sigma)$. Consider the truncation
  function $\hat u$ for $\widehat{T}$ defined by
  $\hat u(x,s) = \rho_{\widehat T}$ if $N_{G}(x,s)\ne\emptyset$ and
  $\hat u(x,s) = x$ if $N_{G}(x,s) =\emptyset$.  Now, let
  $(\widehat{G}, \sigma)\coloneqq \qbmg(\widehat{T},\sigma,\hat{u})$.  By
  construction, $N_{\hat G}(x,s)= N_{\tilde G}(x,s)=N_{G}(x,s)$ if
  $N_{G}(x,s)\ne\emptyset$ and $ N_{\hat G}(x,s)=N_{G}(x,s)=\emptyset$
  otherwise. Since a digraph is uniquely determined by its out-neighbors,
  we have $\widehat{G}=G$.
\end{proof}
It should be noticed that Lemma~\ref{lem:hatT} does not imply $\widehat{T}$
to be a least resolved explanation for the qBMG.  We will study some
properties of least resolved trees of qBMGs in Section~\ref{sec:lrt}.

\begin{theorem}
  \label{thm:removeNs}
  A vertex-colored graph $(G,\sigma)$ with vertex set $L$ is a qBMG if and
  only if there is a BMG $(\tilde G,\sigma)$ such that, for all $x\in L$
  and $s\in S=\sigma(L)$, either $N_{G}(x,s)=N_{\tilde{G}}(x,s)$ or
  $N_{G}(x,s)=\emptyset$ holds.
\end{theorem}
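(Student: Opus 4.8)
The plan is to prove the two directions separately, the ``only if'' direction being essentially a restatement of Lemma~\ref{2out_neighbour} and the ``if'' direction requiring the explicit construction of an explaining tree together with a suitable truncation map.

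For the ``only if'' direction, assume $(G,\sigma)$ is a qBMG, say $(G,\sigma)=\qbmg(T,\sigma,u_T)$ for some leaf-colored tree $(T,\sigma)$ and truncation map $u_T$. I would simply take $(\tilde G,\sigma)\coloneqq\bmg(T,\sigma)$, which is a BMG by definition and has vertex set $L$, and quote Lemma~\ref{2out_neighbour}: it asserts precisely that for every $x\in L$ and every color $s\in\sigma(L)$ we have $N_G(x,s)=N_{\tilde G}(x,s)$ or $N_G(x,s)=\emptyset$.

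For the ``if'' direction, assume a BMG $(\tilde G,\sigma)$ with the stated property is given and fix a leaf-colored tree $(T,\sigma)$ with $\bmg(T,\sigma)=(\tilde G,\sigma)$ and $L(T)=L$. First I would record that $(G,\sigma)$ is then automatically properly colored: if $xy\in E(G)$ with $\sigma(x)=\sigma(y)$, then $N_G(x,\sigma(y))\ne\emptyset$, hence $N_G(x,\sigma(y))=N_{\tilde G}(x,\sigma(y))\ni y$, contradicting the proper coloring of the BMG $(\tilde G,\sigma)$; in particular $N_G(x,\sigma(x))=\emptyset$ for every $x\in L$. Next, define $u_T\colon L\times S\to V(T)$ by $u_T(x,s)\coloneqq\rho_T$ if $N_G(x,s)\ne\emptyset$ and $u_T(x,s)\coloneqq x$ if $N_G(x,s)=\emptyset$. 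Since both $\rho_T$ and $x$ lie on the path from $\rho_T$ to $x$, and since $N_G(x,\sigma(x))=\emptyset$ forces $u_T(x,\sigma(x))=x$, the map $u_T$ is a valid truncation map in the sense of Definition~\ref{def:qbm}. It then remains to check $\qbmg(T,\sigma,u_T)=(G,\sigma)$, and since a digraph is determined by its color-restricted out-neighborhoods, this can be done one pair $(x,s)$ at a time: if $N_G(x,s)\ne\emptyset$, then $u_T(x,s)=\rho_T$, condition~(ii) of Definition~\ref{def:qbm} is vacuous, so the out-neighbors of $x$ of color $s$ in $\qbmg(T,\sigma,u_T)$ are exactly the best matches of $x$ of color $s$, i.e.\ $N_{\tilde G}(x,s)=N_G(x,s)$; and if $N_G(x,s)=\emptyset$, then $u_T(x,s)=x$, and for any leaf $y\ne x$ we have $\lca_T(x,y)\succ_T x$, hence $\lca_T(x,y)\not\preceq_T x=u_T(x,s)$, so $x$ has no quasi-best match of color $s$, matching $N_G(x,s)=\emptyset$.

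I do not anticipate a real obstacle; the construction is essentially the one used in the proof of Lemma~\ref{lem:hatT}. The only points that deserve a sentence of care are the verification that the proposed $u_T$ satisfies the two defining requirements of a truncation map, and the elementary observation that truncating ``at the leaf $x$ itself'' genuinely deletes all out-neighbors of $x$ of the given color, because no other leaf can have its least common ancestor with $x$ equal to or below $x$.
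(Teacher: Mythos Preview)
Your proposal is correct and follows essentially the same approach as the paper: both directions invoke Lemma~\ref{2out_neighbour} for ``only if'' and, for ``if'', pick a tree explaining the associated BMG and define the truncation map to be the root when $N_G(x,s)\ne\emptyset$ and the leaf $x$ otherwise. Your version is in fact slightly more careful, explicitly verifying that $(G,\sigma)$ is properly colored and that the proposed $u_T$ satisfies the defining conditions of a truncation map (in particular $u_T(x,\sigma(x))=x$), points the paper leaves implicit.
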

\begin{proof}
  Lemma~\ref{2out_neighbour} implies the \emph{only-if}-direction. For the
  \emph{if}-direction, suppose that $(\tilde G,\sigma)$ is a BMG that is
  explained by $(T,\sigma)$.  Let $(G,\sigma)$ be a vertex-colored graph
  such that either $N_{G}(x,s)=N_{\tilde{G}}(x,s)$ or
  $N_{G}(x,s)=\emptyset$.  Then $u$ with $u(x,s)=\rho_{T}$ if
  $N_{G}(x,s)=N_{\tilde{G}}(x,s)$ and $u(x,s)=x$ if $N_{G}(x,s)=\emptyset$
  is a truncation map for $T$. By construction,
  $(G,\sigma) = \qbmg(T,\sigma,u)$ and thus, it is a qBMG.
\end{proof}

Theorem~\ref{thm:removeNs} gives a motivation for the following definition.
\begin{definition}\label{def:assocBMG}
  Let $(G,\sigma)$ be a qBMG with vertex set $L$. A BMG $(\tilde G,\sigma)$
  with vertex set $L$ is \emph{associated} with $(G,\sigma)$ if for all
  $x\in L$ and $s\in\sigma(L)$ either $N_{G}(x,s)=N_{\tilde{G}}(x,s)$ or
  $N_{G}(x,s)=\emptyset$ holds.
\end{definition}
A necessary condition for BMGs $(\tilde G,\sigma)$ to be associated with a
qBMG $(G,\sigma)$ is that $(G,\sigma)$ is a subgraph of
$(\tilde G,\sigma)$. In this case, $(G,\sigma)$ can be obtained from
$(\tilde G,\sigma)$ by removing all out-neighbors of specific colors for
specific vertices. This condition is not sufficient, however, as the
example in Figure~\ref{fig:akexample} shows.

\begin{figure}
  \centering
  \includegraphics[width=0.85\linewidth]{./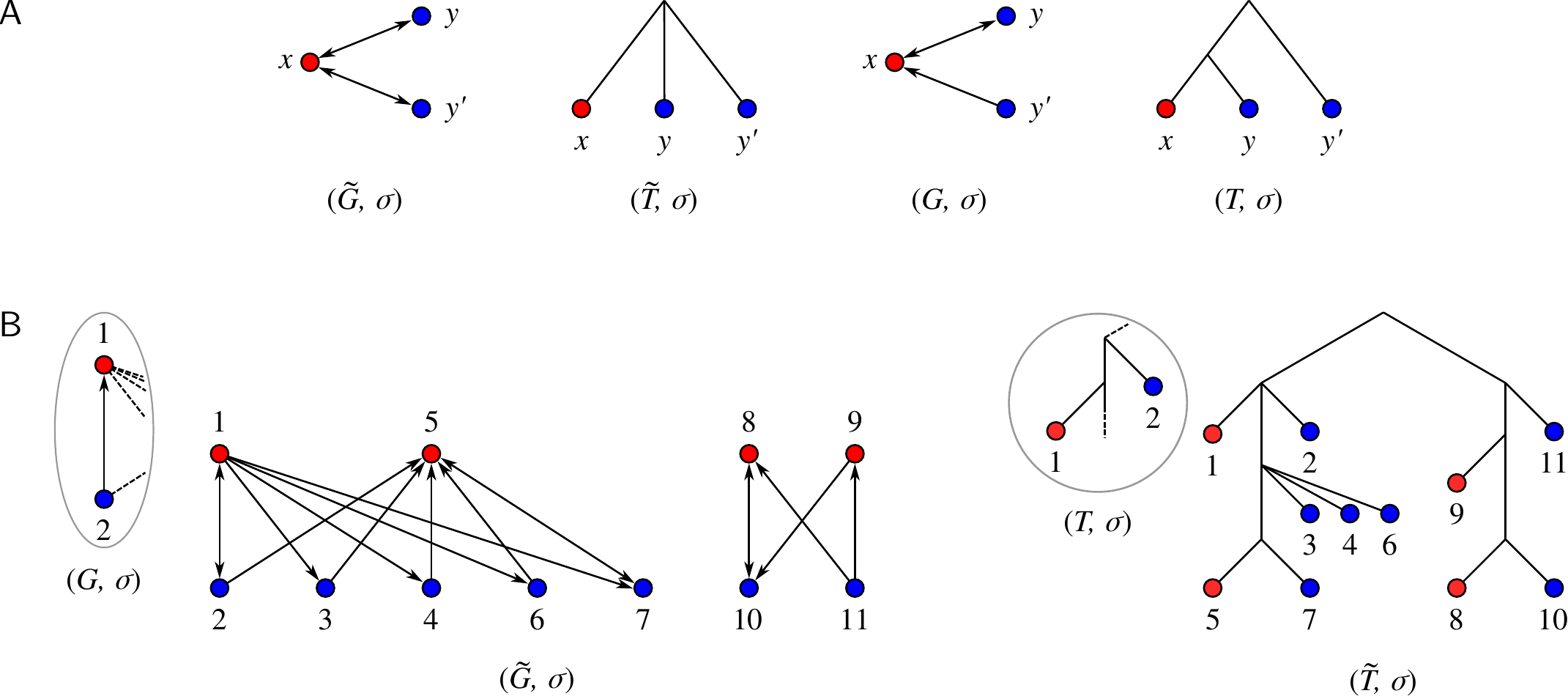}
  \caption{Examples of qBMGs $(G,\sigma)$ that are a subgraph of a BMG
    $(\tilde{G},\sigma)$ that is not associated with $(G,\sigma)$.  A: Both
    digraphs are 2-BMGs on three vertices since they are explained by the trees
    $(T,\sigma)$ and $(\tilde T,\sigma)$, respectively. $G$ arises from
    $\tilde{G}$ by deleting just one edge, namely $xy'$. Thus $G$ is a
    subgraph of $\tilde{G}$. However, $\tilde{G}$ is not associated with
    $G$ since $x$ is also the tail of edges other than $xy'$.  B: Both
    digraphs are again 2-BMGs on $11$ vertices explained by $(T,\sigma)$
    and $(\tilde T,\sigma)$, respectively.  The trees are identical up to
    one edge contraction, as indicated by the circle.}
  \label{fig:akexample}
\end{figure}

By Theorem~\ref{thm:removeNs}, every qBMG has at least one associated
BMG. It is, therefore, possible to obtain all qBMGs from the BMGs by removing
all out-edges with a subset of colors for a subset of the vertices.

In order to show that qBMGs form a hereditary graph class, we consider the
deletion of a single vertex and show that both $(G,\sigma)$ and the
explaining tree $(T,\sigma,u)$ are quite ``well-behaved'' when a single
vertex is deleted.
\begin{lemma}
  \label{lem:delvert}
  Let $(G,\sigma)=\qbmg(T,\sigma,u)$ be a qBMG and $v\in L$. Then there
  exists a truncation map $u'$ such that
  $(G,\sigma)[L']=\qbmg(T_{L'},\sigma_{L'},u')$ where
  $L'=L\setminus\{v\}$. In particular, the digraph $(G,\sigma)[L']$ is
  still a qBMG.
\end{lemma}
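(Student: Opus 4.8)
The plan is to reduce the claim about qBMGs to the known good behavior of BMGs under leaf deletion and then transfer the truncation map. Write $(\tilde G,\sigma)\coloneqq\bmg(T,\sigma)$ for the BMG explained by $(T,\sigma)$, so that $(G,\sigma)$ is obtained from $(\tilde G,\sigma)$ by the colorwise deletion described in Lemma~\ref{2out_neighbour}: for each $x\in L$ and $s\in\sigma(L)$ either $N_G(x,s)=N_{\tilde G}(x,s)$ or $N_G(x,s)=\emptyset$. The first step is to recall (from the BMG literature, e.g.\ \cite{Geiss:19a}) that restriction of a leaf-colored tree to a leaf subset $L'$ restricts the BMG accordingly, i.e.\ $\bmg(T_{L'},\sigma_{L'})=\bmg(T,\sigma)[L']=(\tilde G,\sigma)[L']$; this is already mentioned in the excerpt for $L'=L_{rs}$, and the same argument applies to an arbitrary $L'=L\setminus\{v\}$ because $\lca_{T_{L'}}$ agrees with $\lca_T$ on leaves of $T_{L'}$ and best matches among the remaining leaves are unchanged. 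Call this restricted BMG $(\tilde G',\sigma_{L'})$.

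The second step is to define the new truncation map $u'$ on $T_{L'}$. Because $T_{L'}$ is obtained from a subtree of $T$ by suppressing degree-two vertices, every vertex of $T_{L'}$ corresponds to a vertex of $T$ (namely an $\lca_T$ of a set of leaves of $L'$), and for each leaf $x\in L'$ the $\rho_{T_{L'}}$--$x$ path in $T_{L'}$ sits inside the $\rho_T$--$x$ path in $T$. I would simply set, mimicking the construction in Lemma~\ref{lem:hatT} and Theorem~\ref{thm:removeNs},
\[
u'(x,s)\coloneqq
\begin{cases}
\rho_{T_{L'}} & \text{if } N_G(x,s)\ne\emptyset,\\[2pt]
x & \text{if } N_G(x,s)=\emptyset,
\end{cases}
\qquad u'(x,\sigma(x))\coloneqq x,
\]
for $x\in L'$ and $s\in\sigma(L')\setminus\{\sigma(x)\}$ (and, say, $u'(x,s)\coloneqq\rho_{T_{L'}}$ for colors $s\notin\sigma(L')$ so that $u'$ is total). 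This is visibly a valid truncation map on $T_{L'}$. It then remains to verify $\qbmg(T_{L'},\sigma_{L'},u')=(G,\sigma)[L']$ colorwise: for $x\in L'$ and $s$, if $N_G(x,s)=\emptyset$ then $u'(x,s)=x$ forces $N_{\qbmg(T_{L'},\sigma_{L'},u')}(x,s)=\emptyset$, and $N_{(G,\sigma)[L']}(x,s)=N_G(x,s)\cap L'=\emptyset$ as well; if $N_G(x,s)\ne\emptyset$ then $u'(x,s)=\rho_{T_{L'}}$ makes condition (ii) of Definition~\ref{def:qbm} void, so $N_{\qbmg(T_{L'},\sigma_{L'},u')}(x,s)=N_{\tilde G'}(x,s)=N_{\tilde G}(x,s)\cap L'=N_G(x,s)\cap L'=N_{(G,\sigma)[L']}(x,s)$, using step one and Lemma~\ref{2out_neighbour}. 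Since a digraph is determined by its colorwise out-neighborhoods, the two graphs coincide, and $(G,\sigma)[L']$ is a qBMG.

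The one point that needs a little care — and the only place I expect friction — is the claim $N_G(x,s)\cap L'=N_{(G,\sigma)[L']}(x,s)$ together with the interaction of the two cases when the deleted vertex $v$ happens to be the \emph{only} out-neighbor of $x$ of color $s$ in $(G,\sigma)$, or when $v$ is one of several. In the first subcase $N_{(G,\sigma)[L']}(x,s)=\emptyset$ while $N_G(x,s)=N_{\tilde G}(x,s)=\{v\}$ was nonempty, so the definition of $u'$ above (which keys on $N_G(x,s)$, not on $N_{(G,\sigma)[L']}(x,s)$) would wrongly try to keep these edges; hence $u'$ must instead key on whether $N_G(x,s)\cap L'\ne\emptyset$, i.e.\ I should replace the condition "$N_G(x,s)\ne\emptyset$" by "$N_G(x,s)\setminus\{v\}\ne\emptyset$" in the display. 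With that corrected definition everything goes through, because deleting $v$ from a BMG-out-neighborhood either leaves it nonempty (and then it still equals $N_{\tilde G'}(x,s)$, since restriction of the BMG simply intersects each out-neighborhood with $L'$) or empties it (and then setting $u'(x,s)=x$ is exactly right). No deeper obstacle arises: heredity of qBMGs is, as the surrounding text suggests, essentially a bookkeeping consequence of Lemma~\ref{2out_neighbour} plus the trivial fact that BMGs are closed under induced subgraphs.
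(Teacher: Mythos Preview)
Your approach is different from the paper's and, once repaired, works, but it contains a false claim that you lean on twice. You assert that $\bmg(T_{L'},\sigma_{L'})=\bmg(T,\sigma)[L']$ and later call it ``the trivial fact that BMGs are closed under induced subgraphs.'' This is not true: BMGs are \emph{not} hereditary (indeed, the paper makes a point of this when contrasting them with qBMGs). Deleting a leaf $v$ can create new best-match edges in $\bmg(T_{L'},\sigma_{L'})$ that were absent in $\bmg(T,\sigma)$: if $v$ was the unique closest leaf of its color to some $x$, then after deleting $v$ a more distant leaf becomes a best match. The restriction to a color pair $L_{rs}$ cited in the paper is a different operation (it removes whole color classes, not individual leaves) and does not justify the general statement.

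Fortunately your argument only uses the color-wise equality $N_{\tilde G'}(x,s)=N_{\tilde G}(x,s)\setminus\{v\}$ in the case $N_{\tilde G}(x,s)\setminus\{v\}\ne\emptyset$, and \emph{this} conditional version is correct: a best match of $x$ in $T$ that survives to $L'$ is still a best match in $T_{L'}$, and conversely any $y\in L'$ that was not a best match in $T$ is witnessed by some closer $y'$, which can be taken in $L'$ since $N_{\tilde G}(x,s)\setminus\{v\}\ne\emptyset$. So with the corrected definition of $u'$ (keying on $N_G(x,s)\setminus\{v\}\ne\emptyset$, as you already note) and this conditional statement made explicit, your proof goes through.

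For comparison, the paper does not route through Lemma~\ref{2out_neighbour} at all. It defines a more elaborate truncation map on $T_{L'}$ that tries to preserve the original $u$ vertex-by-vertex (redirecting $u(x,s)$ to the sibling $v^*$ when the parent $w$ of $v$ is suppressed, and to $x$ only when $N(x,s)=\{v\}$), and then verifies $N'(x,s)=N(x,s)\setminus\{v\}$ by a direct case analysis on the tree. Your route is shorter and more conceptual once the conditional BMG-restriction fact is stated correctly; the paper's route has the advantage of producing a truncation map on $T_{L'}$ that is close to the original $u$, which is used elsewhere (e.g.\ in the discussion around Figure~\ref{fig:qBMG-example}).
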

\begin{proof}
  Consider the phylogenetic tree $T'\coloneqq T_{L'}$ obtained by deleting
  the leaf $v$ and suppressing $w=\parent_T(v)$ if $v$ has a single sibling
  $v^*$, i.e., $\child_T(w)=\{v,v^*\}$, and $\sigma'(x)=\sigma(x)$ for
  $x\in L'$.  Note that if $w$ is the root of $T$, then $v^*$ is the new
  root of $T'$.  Depending on whether $w$ is suppressed or not, there is a
  1-1 correspondence between $V(T)\setminus\{v\}$ and $V(T')$ or
  $V(T)\setminus\{v,w\}$ and $V(T')$, respectively, such that
  $L(T')=L(T)\setminus\{v\}$. In our notation, we identify corresponding
  vertices of $T$ and $T'$.  Moreover, we will use the fact that the
  ancestor order $\preceq_{T'}$ is equal to $\preceq_{T}$ restricted to
  $V(T')$.  Given a truncation map $u$ on $T$, we define its restriction
  $u'$ on $T'$ for all $x\in L'$ and $s\in S$ by
  \begin{equation}
    \label{eq:restriction-u}
    u'(x,s) = \begin{cases}
      v^* & \text{if } \child_T(w)=\{v,v^*\} \text{ and thus, } u(x,s)\not\in 
      V(T') \\
      u(x,s) & \text{else if } s\ne \sigma(v) \text{ or } 
      N(x,s)\setminus\{v\}\neq \emptyset \\
      x & \text{otherwise} \\
    \end{cases}
  \end{equation}
  From now on we write $\sigma'\coloneqq \sigma_{L'}$.  The restricted
  leaf-colored tree $(T',\sigma')$ together with the truncation map $u'$
  explains $(G',\sigma')=\qbmg(T',\sigma',u')$. We next show that
  $(G',\sigma')$ is precisely the induced subgraph $(G,\sigma)[L']$.
  Denote the out-neighborhoods of $G$ and $G'$ by
  $N(x,s)\coloneqq N_{G}(x,s)$ and $N'(x,s)\coloneqq N_{G'}(x,s)$,
  respectively.  By construction, $u(x,\sigma(x))=u'(x,\sigma(x))=x$ and
  thus $N(x,\sigma(x))=N'(x,\sigma(x))=\emptyset$ for all $x\in L'$.  We
  proceed by comparing $N(x,s)$ and $N'(x,s)$ for $x\in L'$ and
  $s\in S\setminus \{\sigma(x)\}$. To this end, note that for each
  $y\in L'$, we have $\lca_T(x,y)=\lca_{T'}(x,y)$ since otherwise, we must
  have $\lca_T(x,y)=w\notin V(T')$, which would imply $y=v$; a
  contradiction.
  
  Suppose first that $v\notin N(x,s)$.  If $y\in N(x,s)$, then
  $\lca_T(x,y)\preceq_T u(x,s)$ by Definition~\ref{def:qbm}(ii), and
  there is no $y'\in L$ such that $\sigma(y)=\sigma(y')$ and
  $\lca_T(x,y')\prec_T\lca_T(x,y)$ since $y$ must be a best match of $x$ by
  Definition~\ref{def:qbm}(i)). The latter remains true upon
  restriction of $T$ to $L'$. Because of Equation~\eqref{eq:restriction-u} and
  $y\in N(x,s)\setminus \{v\}$, we have $u'(x,s)=u(x,s)$ or
  $u'(x,s)=v^*$. In the former case, we obtain
  $\lca_{T'}(x,y)=\lca_T(x,y)\preceq_T u(x,s)=u'(x,s)$, implying
  $y\in N'(x,s)$. In the latter case, i.e., if $u'(x,s)=v^*$, we have
  $u(x,s)=w\ne \lca_{T}(x,y)$.  Thus, we obtain
  $\lca_{T}(x,y)\preceq_{T} v^*$ from $\lca_T(x,y)\preceq_T u(x,s)$ and we
  observe that $\lca_T(x,y)\preceq_T v$ is ruled out by $y\in N_s(x)$.
  Therefore, $y\in N'(x,s)$.  If $y\notin N(x,s)$ for some $y$ of color
  $s$, then $u(x,s)\prec_{T} \lca_{T}(x,y)$ or there is some $y'\in L$ of
  color $s$ such that $\lca_T(x,y')\prec_{T}\lca_T(x,y)$ and in particular
  $y'\in N(x,s)$.  If $u(x,s)\prec_{T} \lca_{T}(x,y)$, then by construction
  $u'(x,s)\prec_{T'}\lca_{T'}(x,y)$.  In the latter case, $y'\in N(x,s)$
  implies $y'\ne v$, and thus, $y'\in L'$ and
  $\lca_{T'}(x,y')\prec_{T'}\lca_{T'}(x,y)$.  In both cases, we, therefore,
  have $y\notin N'(x,s)$. In summary, $v\notin N(x,s)$ implies
  $N(x,s)=N'(x,s)$ for all $x\in L'$.
  
  Since $v\in N(x,s)$ and hence $s=\sigma(v)$, it is useful to distinguish
  two cases: Case (i): $N(x,s)\setminus\{v\}\ne \emptyset$. Let
  $q\coloneqq \lca_T(x,v)$. If $v\notin\child_T(q)$ or $|\child_T(q)|>2$,
  then $q\in V(T')$ and hence $\lca_T(x,y)=\lca_{T'}(x,y)$ for all
  $y\in L'$ and $u'(x,s)=u(x,s)$. This implies
  $N'(x,s)= N(x,s)\setminus\{v\}$. If $\child_T(q)=\{v,v^*\}$, then
  $\lca_T(x,y)=q$ implies $y=v$. Thus $v\in N(x,s)$ implies $N(x,s)=\{v\}$,
  contradicting $N(x,s)\setminus\{v\}\ne\emptyset$. Case (ii): If
  $N(x,s)=\{v\}$, then $N'(x,s)=\emptyset$ because $u'(x,s)=x$. In summary,
  $N'(x,s)=N(x,s)\setminus\{v\}$ holds for all $x\in L'$ and $s\in S$. This
  implies that $(G',\sigma')$ is the subgraph of $(G,\sigma)$ induced by
  $L'=L\setminus\{v\}$.
\end{proof}
Since the removal of any set of vertices can be carried out step by step by
deleting one vertex a time, it turns out that the induced subgraph
$(G,\sigma)[L']$ has an explanation in terms of the restriction $T_{L'}$ of
$T$ to the leaf set $L'\subseteq L$. Therefore, the following result is
obtained.
\begin{corollary}
  \label{cor:hereditary}
  Every induced subgraph of a qBMG is a qBMG, i.e., the qBMGs form a
  hereditary graph class.
\end{corollary}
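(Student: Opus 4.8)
The plan is to derive Corollary~\ref{cor:hereditary} directly from Lemma~\ref{lem:delvert} by a straightforward induction on the number of deleted vertices. Let $(G,\sigma)$ be a qBMG with vertex set $L$ and let $L'\subseteq L$ be arbitrary; I want to show $(G,\sigma)[L']$ is a qBMG. Write $L\setminus L'=\{v_1,\dots,v_k\}$ and consider the nested sequence $L=L_0\supseteq L_1\supseteq\cdots\supseteq L_k=L'$ where $L_i=L_{i-1}\setminus\{v_i\}$. The base case $i=0$ is trivial since $(G,\sigma)[L_0]=(G,\sigma)$ is a qBMG by hypothesis. For the inductive step, suppose $(G,\sigma)[L_{i-1}]$ is a qBMG; since $L_i=L_{i-1}\setminus\{v_i\}$ is obtained from $L_{i-1}$ by deleting the single vertex $v_i$, Lemma~\ref{lem:delvert} (applied to the qBMG $(G,\sigma)[L_{i-1}]$) yields that $\big((G,\sigma)[L_{i-1}]\big)[L_i]$ is a qBMG. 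It only remains to observe that $\big((G,\sigma)[L_{i-1}]\big)[L_i]=(G,\sigma)[L_i]$, which is immediate from the definition of induced subgraph and the fact that $L_i\subseteq L_{i-1}\subseteq L$: an edge $xy$ with $x,y\in L_i$ belongs to the twice-restricted graph iff it belongs to $(G,\sigma)$, and the coloring is just the restriction of $\sigma$ in either case. Hence $(G,\sigma)[L_k]=(G,\sigma)[L']$ is a qBMG, completing the induction.

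The only subtlety — and it is a mild one — is that Lemma~\ref{lem:delvert} is stated for a qBMG together with a \emph{specific} explaining tree and truncation map, whereas in the induction I need to reapply it to the graph $(G,\sigma)[L_{i-1}]$ on its own terms. This is handled by the phrase in Lemma~\ref{lem:delvert} that ``$(G,\sigma)[L']$ is still a qBMG'': the lemma's conclusion is precisely that the one-vertex-deleted graph lies again in the class to which the lemma applies, so at each step one simply picks \emph{any} tree and truncation map witnessing that $(G,\sigma)[L_{i-1}]$ is a qBMG (it exists by the previous step) and invokes the lemma with that witness. No control over which tree is chosen is needed, because the statement being proved is purely membership in the class of qBMGs. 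For completeness one may also note the trivial cases $L'=L$ (nothing to delete) and $L'=\emptyset$, the latter being the empty graph, which is vacuously a qBMG as it is explained by, say, a single-leaf tree with the trivial truncation map (or one may simply restrict attention to $L'\ne\emptyset$, as is customary for phylogenetic trees).

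I do not anticipate any real obstacle here: the work has all been done in Lemma~\ref{lem:delvert}, and heredity of a graph class under arbitrary induced-subgraph deletion from heredity under single-vertex deletion is a standard bookkeeping argument. The proof in the excerpt in fact already sketches exactly this reasoning in the sentence preceding the corollary (``the removal of any set of vertices can be carried out step by step by deleting one vertex at a time''), so the formal write-up is just a one-paragraph induction.

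\begin{proof}
  We proceed by induction on $k\coloneqq |L\setminus L'|$. If $k=0$, then
  $(G,\sigma)[L']=(G,\sigma)$ is a qBMG by hypothesis. For the inductive
  step, let $k\ge 1$, pick some $v\in L\setminus L'$, and set
  $L''\coloneqq L\setminus\{v\}\supseteq L'$. By Lemma~\ref{lem:delvert},
  $(G,\sigma)[L'']$ is a qBMG, and $|L''\setminus L'|=k-1$. Applying the
  induction hypothesis to the qBMG $(G,\sigma)[L'']$ and its subset $L'$,
  we conclude that $\big((G,\sigma)[L'']\big)[L']$ is a qBMG. Since
  $L'\subseteq L''\subseteq L$, induced subgraphs compose, i.e.,
  $\big((G,\sigma)[L'']\big)[L']=(G,\sigma)[L']$. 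Hence $(G,\sigma)[L']$ is
  a qBMG.
\end{proof}
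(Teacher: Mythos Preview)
Your proof is correct and matches the paper's own argument: the paper simply remarks that ``the removal of any set of vertices can be carried out step by step by deleting one vertex at a time'' and states the corollary as an immediate consequence of Lemma~\ref{lem:delvert}, which is exactly the induction you have written out. (The paper later gives an alternative proof via the triple characterization of Theorem~\ref{thm:qBMG-via-triples} and Observation~\ref{obs:R-restriction}, but your approach is the primary one.)
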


\begin{fact}\label{fact:components}
  The disjoint union $(G,\sigma)=\bigcupdot_{i=1}^m (G_i,\sigma_i)$ is a
  qBMG if and only if each of $(G_i,\sigma_i)$ is a qBMG.
\end{fact}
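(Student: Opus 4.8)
The plan is to establish both directions by working with the out-neighborhoods and the truncation maps, much as in the proofs of Lemma~\ref{lem:delvert} and Theorem~\ref{thm:removeNs}. For the \emph{if}-direction, suppose each $(G_i,\sigma_i)$ is a qBMG, so that $(G_i,\sigma_i)=\qbmg(T_i,\sigma_i,u_i)$ for some leaf-colored tree $(T_i,\sigma_i)$ with truncation map $u_i$. I would build a single tree $(T,\sigma)$ by attaching the roots $\rho_{T_i}$ of the $T_i$ as children of a new root $\rho$, and take $\sigma$ to be the common extension of the $\sigma_i$. The key point is then to define a truncation map $u$ on $T$ that reproduces exactly the edges of the disjoint union, i.e., that creates no edges between leaves lying in different components $T_i$. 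For a leaf $x\in L(T_i)$ and a color $s$: if $s\in\sigma_i(L(T_i))\setminus\{\sigma(x)\}$, set $u(x,s)\coloneqq u_i(x,s)$, which lies on the path from $\rho_{T_i}$ to $x$, hence on the path from $\rho$ to $x$; if $s\notin\sigma_i(L(T_i))$, set $u(x,s)\coloneqq x$, which forces $N_G(x,s)=\emptyset$ by the remark after Definition~\ref{def:qbm}; and of course $u(x,\sigma(x))=x$. One checks that for $x\in L(T_i)$ and any leaf $y$ in a different component $T_j$, $\lca_T(x,y)=\rho\succ u(x,\sigma(y))$ (since $u(x,\sigma(y))$ is a proper descendant of $\rho$ whenever $\sigma(y)\ne\sigma(x)$ — note $\rho$ itself is never in the image of $u$ here because each $T_i$ has at least one leaf whose color forces $u(x,s)\prec_T\rho$... more simply, $u(x,s)\preceq_T\rho_{T_i}\prec_T\rho$), so condition (ii) of Definition~\ref{def:qbm} fails and $xy\notin E(G)$; and for leaves $x,y$ in the same component $T_i$, best matches and the lca are computed entirely within $T_i$, so $\qbmg(T,\sigma,u)[L(T_i)]=\qbmg(T_i,\sigma_i,u_i)=(G_i,\sigma_i)$. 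Hence $\qbmg(T,\sigma,u)=\bigcupdot_{i=1}^m(G_i,\sigma_i)=(G,\sigma)$, proving it is a qBMG.

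For the \emph{only-if}-direction, this is immediate from Corollary~\ref{cor:hereditary}: each $(G_i,\sigma_i)$ is the subgraph of $(G,\sigma)$ induced by the vertex set $L(G_i)$, and induced subgraphs of qBMGs are qBMGs. I would simply cite Corollary~\ref{cor:hereditary} here.

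I expect the only real subtlety is the \emph{if}-direction, and specifically the verification that no spurious cross-component edges arise. This requires confirming that every $u_i(x,s)$ we lift to $T$ is indeed a proper descendant of the new root $\rho$ — which holds since $u_i(x,s)\preceq_{T_i}\rho_{T_i}$ and $\rho_{T_i}$ is a child of $\rho$ — and that a best match of $x$ in $T$, restricted to a foreign color present only in another component, would necessarily have its lca with $x$ at $\rho$, which then violates condition (ii). A minor bookkeeping point is that some color $s$ may appear in several components; in that case $x\in L(T_i)$ has best matches of color $s$ only among leaves minimizing $\lca_T(x,\cdot)$, and since all such leaves in foreign components share $\lca_T(x,\cdot)=\rho$ while leaves in $T_i$ (if any of color $s$) give a strictly smaller lca, the best matches of $x$ of color $s$ either stay within $T_i$ or, if $T_i$ has no leaf of color $s$, are exactly the foreign leaves at $\rho$ — and those are killed by $u(x,s)=x$. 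This case distinction is the one place where care is needed, but it is routine.
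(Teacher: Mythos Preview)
Your proposal is correct and follows essentially the same approach as the paper: for the \emph{only-if}-direction the paper also appeals to heredity (each component is an induced subgraph), and for the \emph{if}-direction the paper likewise attaches the $T_i$ under a new common root $\rho$ and lifts the truncation maps via $u(x,r)\coloneqq u_i(x,r)$ for $x\in L(T_i)$, noting $u(x,r)\preceq_T\rho_{T_i}\prec_T\rho$ to rule out cross-component edges. Your version is slightly more explicit in handling colors $s\notin\sigma_i(L(T_i))$ by setting $u(x,s)=x$, whereas the paper tacitly relies on $u_i(x,s)\preceq_T\rho_{T_i}$ regardless; both are fine.
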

\begin{proof}
  Since each connected component of $G$ is an induced subgraph the
  \emph{only-if}-direction is trivial. Conversely, consider explanations
  $(T_i,\sigma_i,u_i)$ for the disjoint graphs $(G_i,\sigma_i)$. Let $T$ be
  the tree whose vertices are those of $T_i$ together with a new root
  $\rho$ added as the common parent of the roots $\rho_i$ of $T_i$. It has
  the leave set $L(T)=\bigcupdot_i L(T_i)$.  Define
  $\sigma(x)\coloneqq \sigma_i(x)$ and $u_T(x,r)=u_{T_i}(x,r)$ for
  $x\in L_i$. There are no edges joining vertices from different
  components since $u(x,r)\preceq_T \rho_{T_i}\prec_T \rho$ while for each
  connected component, we have $G[L(T_i)]=G_i$. Thus
  $(G,\sigma)=\qbmg(T,\sigma,u)$.
\end{proof}
The corresponding result for BMGs requires the additional condition that
all connected components use the same set of colors, i.e.
$\sigma(V(G_i))=\sigma(V(G_j))$, see~\cite[]{Geiss:19a}.

\section{Recognition of qBMGs and Rooted Triples}
\label{sect:triples}

The recognition problem for BMGs, and more generally for qBMGs is of
practical interest as part of workflows for orthology detection. The
graph $(G,\sigma)$ recording for each query gene the most similar
sequences in each target genome can be computed 
efficiently~\cite{HernandezSalmeron:20} and serves as an empirical 
approximation for a
BMG or qBMG. The empirical estimate $(G,\sigma)$ may contain false
positive and false negative edges; thus, it is, in general, neither a BMG
nor a qBMG. Solving the recognition problem is the first key step toward
the more difficult problem of identifying potential errors in the
input data. For BMGs, the recognition problem has been solved 
in~\cite{Schaller:21b}, see Proposition~\ref{prop:BMG-triple-charac} in terms 
of two conditions: (i) the absence of color-sinks and (ii) consistency of a
collection of informative and forbidden triples.

We have already seen that qBMGs are not color-sink-free in general,
i.e., they may violate condition~(i), see Figure~\ref{fig:qBMG-example}. In
contrast, triple consistency, i.e., condition~(ii), remains valid as shown
by the following lemma, which generalizes Lemmas~2.11 and~3.2
of~\cite{Schaller:21b} from BMGs to qBMGs:
\begin{lemma}
  \label{lem:qBMG-inf-forb-triples}
  Let $(G,\sigma)$ be a qBMG explained by $(T,\sigma,u)$. Then $T$ displays
  all triples in $\mathscr{R}(G,\sigma)$ but none of the triples in
  $\mathscr{F}(G,\sigma)$.  In particular, 
  $(\mathscr{R}(G,\sigma),\mathscr{F}(G,\sigma))$ is consistent.
\end{lemma}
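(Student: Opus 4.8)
The plan is to unwind the definitions of $\mathscr{R}(G,\sigma)$ and $\mathscr{F}(G,\sigma)$ and verify each triple condition directly from the tree $T$ that explains $(G,\sigma)=\qbmg(T,\sigma,u)$. First I would recall that, by Fact~\ref{remark:subgraph} together with Corollary~\ref{cor:trivialTmap}, the same tree $T$ also explains the BMG $(\tilde G,\sigma)\coloneqq\bmg(T,\sigma)$, and that $(G,\sigma)$ is a subgraph of $(\tilde G,\sigma)$; moreover by Lemma~\ref{2out_neighbour} we have, for every $x$ and every color $s$, either $N_G(x,s)=N_{\tilde G}(x,s)$ or $N_G(x,s)=\emptyset$. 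This "all-or-nothing" behaviour is the crucial handle: it means that the triple information extracted from $(G,\sigma)$ is compatible with the triple information that would be extracted from the genuine BMG $(\tilde G,\sigma)$, and the latter is known to be displayed/excluded by $T$ via (the BMG case of) Lemma~2.11 and~3.2 of~\cite{Schaller:21b}.

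The core of the argument is then two claims. For the forbidden triples: suppose $ab|b'\in\mathscr{F}(G,\sigma)$, so $\sigma(a)\ne\sigma(b)=\sigma(b')$, $b\ne b'$, and $ab,ab'\in E(G)$. Then $b,b'\in N_G(a,\sigma(b))\ne\emptyset$, so by Lemma~\ref{2out_neighbour} $N_G(a,\sigma(b))=N_{\tilde G}(a,\sigma(b))$, hence $b$ and $b'$ are both best matches of $a$ in $(T,\sigma)$; by Definition~\ref{def:bm} this forces $\lca_T(a,b)=\lca_T(a,b')$, and since $T$ is phylogenetic, $\lca_T(a,b',b)=\lca_T(a,b)$ as well — consequently $T$ cannot display $ab|b'$ (which would require $\lca_T(a,b)\prec_T\lca_T(a,b')$). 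For the informative triples: suppose $ab|b'\in\mathscr{R}(G,\sigma)$, so $ab\in E(G)$ but $ab'\notin E(G)$ with $\sigma(a)\ne\sigma(b)=\sigma(b')$. Then $b\in N_G(a,\sigma(b))\ne\emptyset$, so again $N_G(a,\sigma(b))=N_{\tilde G}(a,\sigma(b))$, whence $b$ is a best match of $a$ but $b'$ is not. Being a best match, $\lca_T(a,b)\preceq_T\lca_T(a,b')$; being a non-best match, $\lca_T(a,b')\not\preceq_T\lca_T(a,b)$, hence $\lca_T(a,b)\prec_T\lca_T(a,b')$. It remains to check $\lca_T(a,b')=\lca_T(b,b')$: since $b'\preceq_T\lca_T(a,b')$ and $b\preceq_T\lca_T(a,b)\prec_T\lca_T(a,b')$, we get $\lca_T(b,b')\preceq_T\lca_T(a,b')$, while $\lca_T(b,b')\prec_T\lca_T(a,b')$ is impossible because then $a$ and $b$ would have a common ancestor strictly below $\lca_T(a,b')$ contradicting minimality — so equality holds and $T$ displays $ab|b'$.

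Finally, since $T$ simultaneously displays every triple in $\mathscr{R}(G,\sigma)$ and avoids every triple in $\mathscr{F}(G,\sigma)$, the tree $T$ agrees with the pair $(\mathscr{R}(G,\sigma),\mathscr{F}(G,\sigma))$, which is by definition exactly the assertion that this pair is consistent. I do not expect any serious obstacle here: the only point requiring a little care is the bookkeeping with least common ancestors — in particular confirming that "$b$ is not a best match of $a$'' yields the strict inequality $\lca_T(a,b)\prec_T\lca_T(a,b')$ rather than merely $\preceq_T$, and the identification $\lca_T(a,b')=\lca_T(b,b')$ needed to match the definition of "displays $ab|b'$''. These are routine but are really the whole content; everything else is just invoking Lemma~\ref{2out_neighbour} to reduce to the best-match (BMG) situation, which is already handled in the cited reference.
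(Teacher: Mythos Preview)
Your argument is correct. The core content is the same $\lca$-comparison as in the paper, but you route it slightly differently: you invoke Lemma~\ref{2out_neighbour} to pass to the associated BMG $(\tilde G,\sigma)=\bmg(T,\sigma)$ and then argue with genuine best matches, whereas the paper stays with $(T,\sigma,u)$ and uses the truncation condition $\lca_T(a,b)\preceq_T u(a,\sigma(b))$ directly. Concretely, for an informative triple the paper rules out $\lca_T(a,b)=\lca_T(a,b')$ by observing that this equality together with $\lca_T(a,b)\preceq_T u(a,\sigma(b))$ would force $ab'\in E(G)$; you rule it out by noting that $b'\notin N_G(a,\sigma(b))=N_{\tilde G}(a,\sigma(b))$ means $b'$ is not a best match. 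Your detour through Lemma~\ref{2out_neighbour} buys a clean conceptual reduction to the already-understood BMG case, at the cost of an extra dependency; the paper's version is self-contained from Definition~\ref{def:qbm}. One small remark: your phrase ``being a non-best match, $\lca_T(a,b')\not\preceq_T\lca_T(a,b)$'' is not a consequence of $b'$ failing to be a best match alone---it uses that $b$ \emph{is} a best match (so equality would make $b'$ one too). You do have both facts in hand, so the step is fine, but the wording slightly obscures this. Your explicit verification of $\lca_T(a,b')=\lca_T(b,b')$ is more careful than the paper, which leaves this (standard) fact implicit.
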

\begin{proof}
  First, suppose that $ab|b'\in \mathscr{R}(G,\sigma)$, i.e., $ab\in E(G)$
  and $ab'\notin E(G)$.  Since $ab\in E(G)$, there is no $b''$ of color
  $\sigma(b'')=\sigma(b)(=\sigma(b'))$ such that
  $\lca_T(a,b'')\prec_T\lca_T(a,b)$ and that
  $\lca_T(a,b)\preceq_T u(a,\sigma(b))$.  In particular,
  $\lca_T(a,b)\preceq_T\lca_T(a,b')$. On the other hand, if
  $\lca_T(a,b')=\lca_T(a,b)\preceq_T u(a,\sigma(b))$, then $ab'\in E(G)$
  since $(T,\sigma,u)$ explains the qBMG $(G,\sigma)$.  Therefore,
  $\lca_T(a,b)\prec_T \lca_T(a,b')$ is the only remaining possibility and
  $T$ displays the triple $ab|b'$.
  
  Now suppose that $ab|b'\in \mathscr{F}(G,\sigma)$, i.e., $ab\in E(G)$ and
  $ab'\in E(G)$.  By similar arguments as above, this implies
  $\lca_T(a,b)\preceq_T\lca_T(a,b')$ and
  $\lca_T(a,b')\preceq_T\lca_T(a,b)$, respectively.  Therefore,
  $\lca_T(a,b)=\lca_T(a,b')$ and hence $T$ does not display the triple
  $ab|b'$. Similarly, $T$ does not display $ab'|b$ either.
\end{proof}

\begin{theorem}
  \label{thm:qBMG-via-triples}
  A properly-colored digraph $(G,\sigma)$ with vertex set $L$ is a
  qBMG if and only if $(\mathscr{R}(G,\sigma),\mathscr{F}(G,\sigma))$
  is consistent.  In this case, for every tree $T$ on $L$ that agrees with
  $(\mathscr{R}(G,\sigma),\mathscr{F}(G,\sigma))$, there is a truncation
  map $u$ such that $(T,\sigma,u)$ explains $(G,\sigma)$.
\end{theorem}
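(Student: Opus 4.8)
The plan is to prove both directions. The "only if" direction is immediate from Lemma~\ref{lem:qBMG-inf-forb-triples}, which already establishes that if $(G,\sigma)=\qbmg(T,\sigma,u)$ then $T$ agrees with $(\mathscr{R}(G,\sigma),\mathscr{F}(G,\sigma))$, hence this pair is consistent. So the real content is the "if" direction together with the stronger claim about arbitrary agreeing trees.

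For the "if" direction, suppose $(\mathscr{R},\mathscr{F})\coloneqq(\mathscr{R}(G,\sigma),\mathscr{F}(G,\sigma))$ is consistent, and let $T$ be \emph{any} tree on leaf set $L$ that agrees with $(\mathscr{R},\mathscr{F})$ (such a tree exists by consistency; note we do not need $\build$ specifically since we want the statement for every agreeing tree). The strategy is to build a truncation map $u$ on $T$ directly from the edge set of $G$, mimicking the construction in Lemma~\ref{lem:hatT} and Theorem~\ref{thm:removeNs}: set $u(x,\sigma(x))=x$ as required; for $s\neq\sigma(x)$, set $u(x,s)=\rho_T$ if $N_G(x,s)\neq\emptyset$ and $u(x,s)=x$ if $N_G(x,s)=\emptyset$. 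This is trivially a valid truncation map. It then remains to verify $\qbmg(T,\sigma,u)=(G,\sigma)$, i.e., that for all $x\in L$ and $s\in\sigma(L)\setminus\{\sigma(x)\}$ we have $N_{\qbmg(T,\sigma,u)}(x,s)=N_G(x,s)$.

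The verification splits according to whether $N_G(x,s)$ is empty. If $N_G(x,s)=\emptyset$, then $u(x,s)=x$, so condition~(ii) of Definition~\ref{def:qbm} forces $N_{\qbmg(T,\sigma,u)}(x,s)=\emptyset$, and we are done. If $N_G(x,s)\neq\emptyset$, then $u(x,s)=\rho_T$, so condition~(ii) is vacuous and $N_{\qbmg(T,\sigma,u)}(x,s)$ equals exactly the set of best matches of $x$ of color $s$ in $(T,\sigma)$. The core combinatorial step is therefore: \emph{$y$ is a best match of $x$ of color $s$ in $(T,\sigma)$ if and only if $y\in N_G(x,s)$}. For this I would argue via the triples. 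If $xy\in E(G)$ and $xy'\notin E(G)$ for some $y'$ of color $s$, then $xy|y'\in\mathscr{R}$, so $T$ displays $xy|y'$, giving $\lca_T(x,y)\prec_T\lca_T(x,y')$; if instead $xy,xy'\in E(G)$ with $y\neq y'$, then $xy|y'\in\mathscr{F}$ and $T$ does not display it, and symmetrically $T$ does not display $xy'|y$, forcing $\lca_T(x,y)=\lca_T(x,y')$. Combining these over all $y'$ of color $s$: every $y\in N_G(x,s)$ has the same $\lca_T(x,\cdot)$ value, which is strictly below $\lca_T(x,y')$ for every $y'\notin N_G(x,s)$ of color $s$ — hence every such $y$ is a best match, and every $y'\notin N_G(x,s)$ is not. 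This gives the desired equality of out-neighborhoods, completing the proof. The main obstacle is purely bookkeeping: being careful that the triple-based argument correctly handles the case where \emph{all} leaves of color $s$ are out-neighbors of $x$ (so $N_G(x,s)$ contains every color-$s$ leaf and there are no "negative" $y'$ to compare against), and the case where $x$ has no out-neighbor of color $s$ at all, which is exactly why the truncation map is needed.
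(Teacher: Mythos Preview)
Your proposal is correct and follows essentially the same approach as the paper: both define the truncation map by $u(x,s)=\rho_T$ when $N_G(x,s)\neq\emptyset$ and $u(x,s)=x$ otherwise, and both verify $\qbmg(T,\sigma,u)=(G,\sigma)$ by using the informative and forbidden triples to control the $\lca$ relationships. The only cosmetic difference is that the paper phrases the best-match verification as a proof by contradiction while you argue directly, but the underlying triple-based reasoning is identical.
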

\begin{proof}
  If $(G,\sigma)$ is a qBMG, then consistency of
  $(\mathscr{R}(G,\sigma),\mathscr{F}(G,\sigma))$ follows from
  Lemma~\ref{lem:qBMG-inf-forb-triples}.  Now suppose that
  $(\mathscr{R}(G,\sigma),\mathscr{F}(G,\sigma))$ is consistent.  From
  $L_{\mathscr{R}(G,\sigma)\cup\mathscr{F}(G,\sigma)}\subseteq L$, at least
  one tree with leaf set $L$ displays all triples in
  $\mathscr{R}(G,\sigma)$ and none of the triples in
  $\mathscr{F}(G,\sigma)$. Let $(T,\sigma)$ be any tree with this
  property. Set $S\coloneqq \sigma(L)$ and consider the truncation
  map $u\colon L \times S \to V(T)$ that is given by $u(x,s)=x$ if
  $N_G(x,s)=\emptyset$, and $u(x, s)=\rho_T$ otherwise, for all
  $s\in S$ and $x\in L$. Note that $u$ is a well-defined truncation map
  for $(T,\sigma)$, since $(G,\sigma)$ is properly colored and hence
  $u(x,\sigma(x))=x$ for all $x\in L$.
  
  Let $(\widehat{G},\sigma)\coloneqq \qbmg(T,\sigma,u)$ be the qBMG
  explained by $(T,\sigma,u)$, $x\in L$ and $s\in S$. It remains to show
  that $(\widehat{G},\sigma) =(G,\sigma)$.  For $N_G(x,s)=\emptyset$, we
  have set $u(x,s)=x$ and thus $ N_{\hat G}(x,s)=\emptyset$ as a
  consequence of condition~(ii) in Definition~\ref{def:qbm}.  Now suppose
  $N_G(x,s)\ne\emptyset$. This is possible only if $\sigma(x)\ne s$.  By
  construction, we have $u(x,s)=\rho_T$, and therefore,
  $y\in N_{\hat G}(x,s)$ with $\sigma(y)=s$ if and only if $y$ is a best
  match of $x$ in $(T,\sigma)$.  Consider a vertex $y\in N_G(x,s)$ and
  assume, for contradiction, that $y$ is not a best match of $x$ in
  $(T,\sigma)$, i.e., there is some $y'$ of color $s$ such that
  $\lca_T(x,y')\prec_T\lca_T(x,y)$.  If $y'\in N_G(x,s)$, then
  $xy'|y \in \mathscr{F}(G,\sigma)$.  If $y'\notin N_G(x,s)$, then
  $xy|y' \in \mathscr{R}(G,\sigma)$.  In both cases, the agreement of $T$
  with $(\mathscr{R}(G,\sigma),\mathscr{F}(G,\sigma))$ contradicts that
  $\lca_T(x,y')\prec_T\lca_T(x,y)$.  Hence, $y$ must be a best match of $x$
  and thus $y\in N_{\hat G}(x,s)$.  Now assume $y\notin N_G(x,s)$.  In this
  case, $N_G(x,s)\ne\emptyset$ implies the existence of some
  $y'\in N_G(x,s)$ of color $s$ and distinct from $y$.  Hence,
  $xy'|y\in \mathscr{R}(G,\sigma)$ is displayed by $T$ and thus,
  $\lca_T(x,y')\prec_T\lca_T(x,y)$.  Therefore, $y$ is not a best match of
  $x$ in $(T,\sigma)$, and $y\notin N_{\hat G}(x,s)$.  In summary, we have
  $N_{\hat G}(x,s)=N_{G}(x,s)$ for all $x\in L$ and all
  $s\in S\setminus\{\sigma(x)\}$.  Therefore, we conclude that
  $(G,\sigma)=(\widehat{G},\sigma)$ and that $(T,\sigma,u)$ explains the
  qBMG $(G,\sigma)$.
\end{proof}

As a direct consequence of Proposition~\ref{prop:BMG-triple-charac} and
Theorem~\ref{thm:qBMG-via-triples} we obtain
\begin{theorem}\label{thm:NewChar}
  A properly colored graph $(G,\sigma)$ is a BMG if and only
  if $(G,\sigma)$ is a color-sink-free qBMG.
\end{theorem}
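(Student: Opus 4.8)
The plan is simply to chain together the two triple-based characterizations that are already in hand. By Proposition~\ref{prop:BMG-triple-charac}, a properly colored graph $(G,\sigma)$ is a BMG if and only if it is color-sink-free \emph{and} the pair $(\mathscr{R}(G,\sigma),\mathscr{F}(G,\sigma))$ is consistent; by Theorem~\ref{thm:qBMG-via-triples}, $(G,\sigma)$ is a qBMG if and only if that same pair is consistent. The key observation making the chaining legitimate is that both results are phrased in terms of exactly the triple sets of Definition~\ref{def:informative_triples}, so the ``triple consistency'' condition is literally the same object in the two statements and may be substituted directly.

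Concretely, I would split into the two directions. For the forward implication, assume $(G,\sigma)$ is a BMG: Proposition~\ref{prop:BMG-triple-charac} then gives both that $(G,\sigma)$ is color-sink-free and that $(\mathscr{R}(G,\sigma),\mathscr{F}(G,\sigma))$ is consistent, and the latter, fed into Theorem~\ref{thm:qBMG-via-triples}, yields that $(G,\sigma)$ is a qBMG; hence $(G,\sigma)$ is a color-sink-free qBMG. For the converse, assume $(G,\sigma)$ is a color-sink-free qBMG: being a qBMG gives, via Theorem~\ref{thm:qBMG-via-triples}, consistency of $(\mathscr{R}(G,\sigma),\mathscr{F}(G,\sigma))$, and adding the hypothesis of color-sink-freeness we are exactly in the situation of Proposition~\ref{prop:BMG-triple-charac}, so $(G,\sigma)$ is a BMG.

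I do not expect any real obstacle: the statement is a formal corollary, and the only point requiring even a second's care is to confirm that the consistency condition is syntactically identical in the two source results so that no re-proof is needed. If desired, one could append a sentence noting that the tree construction inside Theorem~\ref{thm:qBMG-via-triples} is compatible with the ``every agreeing tree explains $(G,\sigma)$'' clause of Proposition~\ref{prop:BMG-triple-charac}, but this is not needed for the biconditional itself.
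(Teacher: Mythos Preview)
Your proposal is correct and matches the paper's approach exactly: the paper states the theorem as a direct consequence of Proposition~\ref{prop:BMG-triple-charac} and Theorem~\ref{thm:qBMG-via-triples}, which is precisely the chaining argument you describe.
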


The proof of Theorem~\ref{thm:qBMG-via-triples} is constructive and thus
provides an algorithm to decide computationally whether $(G,\sigma)$ is
qBMG and, if so, to compute an explanation $(T,\sigma,u)$ for $(G,\sigma)$.
The procedure is summarized in Algorithm~\ref{alg:qBMG}. It
relies on the polynomial-time algorithm \texttt{MTT}~\cite{He:06}, named
for the ``mixed triplets problem restricted to trees'', which decides
whether $(\mathscr{R},\mathscr{F})$ is consistent and -- in the
affirmative case -- constructs a corresponding tree $T$.  \texttt{MTT} in
turn can be understood as a generalization of the well-known
\texttt{BUILD} algorithm~\cite{Aho:81}. Given a set of rooted triples
$\mathscr{R}$ defined on a set of leaves $L$, \texttt{BUILD} produces an
undirected auxiliary graph, called \emph{Aho et al.\ graph} and denoted by
$[\mathscr{R},L]$, with vertex set $L$ and edges $xy$ if and only if there
is some $z\in L$ such that $xy|z\in \mathscr{R}$. \texttt{BUILD} then
recurses on the connected components of $[\mathscr{R}, L]$ with singleton
vertex sets serving as base cases. The algorithm returns a tree $T$ (on
$L$) displaying all triples in $\mathscr{R}$, which is determined by the
recursion hierarchy and denoted by $\build(\mathscr{R},L)$, or fails if no
such tree exists. The latter is the case if and only if, at some recursion
step with $|L'|>1$, the Aho et al.\ graph $[\mathscr{R}',L']$ is connected.

\begin{algorithm}[tb]
  \small 
  \caption{\texttt{qBMG recognition}}
  \label{alg:qBMG}
  \begin{algorithmic}[1]
    \Require  A  vertex-colored digraph $(G,\sigma)$ with
    $\sigma\colon V(G)\to S$
    \Ensure   A tree $(T,\sigma,u)$ that explains $(G,\sigma)$ if
    it is a qBMG and, otherwise, \texttt{false}
    \If{$(G,\sigma)$ is not properly colored}
    \State \Return \texttt{false}
    \EndIf
    \State Compute $\mathscr{R}(G,\sigma)$ and $\mathscr{F}(G,\sigma)$
    according to Definition~\ref{def:informative_triples}
    \State Use \texttt{MTT} to check if
    $(\mathscr{R}(G,\sigma), \mathscr{F}(G,\sigma))$
    is consistent and, in the positive case, compute an agreeing
    tree $T$;\newline  otherwise \Return \texttt{false}
    \State Initialize $u(x,s)=x$ for all $(x,s)\in L\times S$
    \ForAll{$xy\in E(G)$}
    \State $u(x,\sigma(y))\leftarrow\rho_T$
    \EndFor
    \State 	\Return $(T,\sigma,u)$
  \end{algorithmic}
\end{algorithm}

\begin{corollary}\label{cor:runtime_qBMGrecognition}
  Algorithm~\ref{alg:qBMG} with input $(G=(L,E),\sigma)$ can be
  implemented to run in $O(|E| |L|^2\log |L|)$ and decides whether
  $(G,\sigma)$ is a qBMG and, in the affirmative case, constructs a tree
  $(T,\sigma,u)$ that explains $(G,\sigma)$.
\end{corollary}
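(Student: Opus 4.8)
The correctness of Algorithm~\ref{alg:qBMG} is essentially a restatement of Theorem~\ref{thm:qBMG-via-triples}: after the proper-coloring test it constructs exactly the triple sets $\mathscr{R}(G,\sigma)$ and $\mathscr{F}(G,\sigma)$ of Definition~\ref{def:informative_triples}, uses \texttt{MTT} to decide their consistency and, in the affirmative case, to produce a tree $T$ on $L$ agreeing with $(\mathscr{R}(G,\sigma),\mathscr{F}(G,\sigma))$, and finally sets $u(x,\sigma(y))=\rho_T$ precisely for those pairs with $N_G(x,\sigma(y))\ne\emptyset$ and $u(x,s)=x$ otherwise---which is exactly the truncation map built in the proof of Theorem~\ref{thm:qBMG-via-triples}. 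So the plan is to concentrate on the running-time bound, walking through the algorithm step by step and isolating the most expensive step.

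First I would dispose of the cheap parts. The proper-coloring test inspects each edge once and costs $O(|E|)$. For the triple-construction step I would first build an $|L|\times|L|$ adjacency table (in time $O(|L|^2)$) so that queries ``$xz\in E(G)$'' take constant time, and bucket the vertices by color. Then $\mathscr{R}(G,\sigma)$ is obtained by iterating over every edge $ab\in E(G)$ and every vertex $b'\ne b$ with $\sigma(b')=\sigma(b)$ and testing $ab'\notin E(G)$; since there are at most $|L|-1$ candidates $b'$ per edge, this costs $O(|E|\,|L|)$ and in particular $|\mathscr{R}(G,\sigma)|=O(|E|\,|L|)$. The set $\mathscr{F}(G,\sigma)$ is obtained, for each vertex $a$, by emitting all ordered pairs of distinct vertices inside each monochromatic class of $N_G(a)$; this costs $\sum_{a\in L}\sum_{s}|N_G(a,s)|^2\le\sum_{a\in L}|N_G(a)|^2\le|L|\sum_{a\in L}|N_G(a)|=O(|E|\,|L|)$, which also bounds $|\mathscr{F}(G,\sigma)|$. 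Hence this step runs in $O(|E|\,|L|+|L|^2)$ time and produces triple sets of total size $M\coloneqq|\mathscr{R}(G,\sigma)|+|\mathscr{F}(G,\sigma)|=O(|E|\,|L|)$.

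The dominant step is the call to \texttt{MTT}. I would invoke its polynomial-time guarantee~\cite{He:06}: deciding whether $(\mathscr{R},\mathscr{F})$ is consistent and, if so, constructing an agreeing tree on $L$ takes $O(M\,|L|\log|L|)$ time, where $M=|\mathscr{R}|+|\mathscr{F}|$. With $M=O(|E|\,|L|)$ from the previous step, this costs $O(|E|\,|L|^2\log|L|)$. The remaining work is again cheap: initializing $u$ on $L\times S$ with $|S|=|\sigma(L)|\le|L|$ costs $O(|L|^2)$, and the final loop over the edges touches each edge once and costs $O(|E|)$; both are dominated by the \texttt{MTT} step. Summing the contributions gives the claimed bound $O(|E|\,|L|^2\log|L|)$. (The degenerate case $|E|=0$ is handled separately: a properly colored edgeless digraph is trivially a qBMG, explained by any tree on $L$ together with the truncation map $u(x,s)=x$.)

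I expect no genuine difficulty here: correctness is already secured by Theorem~\ref{thm:qBMG-via-triples}, and the running-time analysis is a bookkeeping exercise whose only slightly delicate points are the estimate $\sum_{a\in L}|N_G(a)|^2\le|L|\,|E|$ that pins down $M=O(|E|\,|L|)$ and the need to quote the complexity of \texttt{MTT} in the form $O(M\,|L|\log|L|)$; once those are in place, the stated bound follows immediately.
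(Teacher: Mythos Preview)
Your argument is correct and follows essentially the same route as the paper's own proof: correctness via Theorem~\ref{thm:qBMG-via-triples}, the $O(|E|\,|L|)$ bound on the size and construction time of the triple sets, and then the \texttt{MTT} call as the dominating step. The only cosmetic difference is that the paper quotes the precise \texttt{MTT} bound $O(|\mathscr{R}|\,|L| + |\mathscr{F}|\,|L|\log|L| + |L|^2\log|L|)$ from~\cite{He:06} rather than your coarser $O(M\,|L|\log|L|)$, but after plugging in $M=O(|E|\,|L|)$ both yield the same $O(|E|\,|L|^2\log|L|)$.
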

\begin{proof}
  It takes $O(|E|)$ time to verify whether $(G,\sigma)$ is a properly
  colored digraph.  The triple sets $\mathscr{R}(G,\sigma)$ and
  $\mathscr{F}(G,\sigma)$ may be obtained in $O(|L|\,|E|)$ time since
  every triple in $\mathscr{R}(G,\sigma) \cup \mathscr{F}(G,\sigma)$
  is identifiable by an edge $e$ and a vertex not incident with $e$. Given a 
  pair $(\mathscr{R}, \mathscr{F})$ of triple sets defined
  on $L$, the algorithm \texttt{MTT} decides in $O(|\mathscr{R}||L| +
  |\mathscr{F}| |L| \log |L| + |L|^2 \log |L|)$ time whether $(\mathscr{R},
  \mathscr{F})$ is consistent and, if so, returns a corresponding tree $T$
  with the same time complexity~\cite{He:06}. Since $|\mathscr{R}|\le
  |L|\,|E|$ and $|\mathscr{F}|\le |L|\,|E|$, we obtain an upper bound of
  $O(|E|\,|L|^2\log |L|)$ for \texttt{MTT}.  Finally, the truncation
  map $u$ is constructed in $O(|L||S| + |E|)$ by first initializing $u(x,s)=x$
  for all $(x,s)\in L\times S$.  Then the edges are visited in
  arbitrary order. We set  $u(x,s)\coloneqq\rho_T$ if there is an edge 
  $e=xy$ with $\sigma(y)=s$.
  Since only colors in $\sigma(L)$ are considered, we may assume
  $|S|\le |L|$. The total effort is therefore dominated by \texttt{MTT}.
\end{proof}

The following technical result shows that the informative and forbidden
triples in a subgraph $(G,\sigma)[V']$ induced by $V'\subseteq V(G)$ are
exactly the respective sets of triples of the original graph restricted to
$V'$.
\begin{fact}[\cite{Schaller:21c}, Observation~2]
  \label{obs:R-restriction}
  Let $(G,\sigma)$ be a vertex-colored digraph and $V'\subseteq V(G)$.
  Then $R(G,\sigma)_{V'}=R(G[V'],\sigma_{V'})$ holds for each
  $R\in\{\mathscr{R},\mathscr{F},\Rbin\}$.
\end{fact}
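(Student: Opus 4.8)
The plan is to prove the three set equalities $R(G,\sigma)_{V'} = R(G[V'],\sigma_{V'})$ for $R \in \{\mathscr{R}, \mathscr{F}, \Rbin\}$ by unwinding the definitions on both sides. Throughout, write $(G',\sigma') \coloneqq (G[V'],\sigma_{V'})$, and observe the key elementary fact that for $x,y \in V'$ we have $xy \in E(G')$ if and only if $xy \in E(G)$, and $\sigma'(x) = \sigma(x)$; this is immediate from the definition of induced subgraph and restricted coloring. Hence every adjacency/non-adjacency condition and every color condition appearing in Definition~\ref{def:informative_triples} is preserved verbatim when we pass between $G$ and $G'$, as long as all vertices involved lie in $V'$.

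First I would handle $\mathscr{R}$. A triple $ab|b'$ lies in $\mathscr{R}(G,\sigma)_{V'}$ iff $a,b,b' \in V'$ and $ab|b' \in \mathscr{R}(G,\sigma)$, i.e., $\sigma(a) \neq \sigma(b) = \sigma(b')$, $ab \in E(G)$, and $ab' \notin E(G)$. By the elementary fact above, with $a,b,b' \in V'$ this is equivalent to $\sigma'(a) \neq \sigma'(b) = \sigma'(b')$, $ab \in E(G')$, and $ab' \notin E(G')$, which is exactly the statement $ab|b' \in \mathscr{R}(G',\sigma')$. The argument for $\mathscr{F}$ is identical in structure, simply replacing the condition $ab' \notin E(G)$ by $b \neq b'$ and $ab' \in E(G)$, both of which are again preserved under the correspondence since $a,b,b' \in V'$. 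This gives $\mathscr{R}(G,\sigma)_{V'} = \mathscr{R}(G',\sigma')$ and $\mathscr{F}(G,\sigma)_{V'} = \mathscr{F}(G',\sigma')$.

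Finally, for $\Rbin$ I would use the two cases already established together with the fact that restriction to $V'$ distributes over union of triple sets: $(\mathscr{A} \cup \mathscr{B})_{V'} = \mathscr{A}_{V'} \cup \mathscr{B}_{V'}$. Since $\Rbin(G,\sigma) = \mathscr{R}(G,\sigma) \cup \{bb'|a : ab|b' \in \mathscr{F}(G,\sigma)\}$, restriction gives $\Rbin(G,\sigma)_{V'} = \mathscr{R}(G,\sigma)_{V'} \cup \{bb'|a : ab|b' \in \mathscr{F}(G,\sigma)\}_{V'}$. The first term equals $\mathscr{R}(G',\sigma')$ by the previous paragraph. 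For the second term, note that $bb'|a$ has the same leaf set $\{a,b,b'\}$ as $ab|b'$, so restricting the derived set to $V'$ is the same as first restricting $\mathscr{F}(G,\sigma)$ to $V'$ and then applying the $ab|b' \mapsto bb'|a$ map; hence it equals $\{bb'|a : ab|b' \in \mathscr{F}(G',\sigma')\}$, and combining yields $\Rbin(G,\sigma)_{V'} = \Rbin(G',\sigma')$.

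There is no real obstacle here — the statement is essentially a bookkeeping lemma, and the only point requiring any care is the observation in the last paragraph that the map $ab|b' \mapsto bb'|a$ preserves the underlying leaf set, so that it commutes with restriction to $V'$. This is exactly why one can pull the restriction inside the definition of $\Rbin$.
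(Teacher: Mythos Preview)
Your proof is correct. The paper itself does not supply a proof of this statement: it is recorded as an Observation cited from~\cite{Schaller:21c} and used without further justification. Your argument by direct unwinding of Definition~\ref{def:informative_triples}, together with the observation that $ab|b'\mapsto bb'|a$ preserves the underlying leaf set and hence commutes with restriction to $V'$, is exactly the routine verification one would expect for such a bookkeeping lemma.
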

Theorem~\ref{thm:qBMG-via-triples} and Observation~\ref{obs:R-restriction} yield
an alternative proof for the fact that qBMGs form a hereditary graph class.
\medskip
\par\noindent\emph{Alternative Proof of Corollary~\ref{cor:hereditary}.}
Let $(G,\sigma)$ be a qBMG. Then
$(\mathscr{R}\coloneqq\mathscr{R}(G,\sigma),
\mathscr{F}\coloneqq\mathscr{F}(G,\sigma))$ is consistent by
Theorem~\ref{thm:qBMG-via-triples}. By Observation~\ref{obs:R-restriction},
we have
$\mathscr{R'}\coloneqq\mathscr{R}(G[V'],\sigma_{V'})\subseteq\mathscr{R}$
and
$\mathscr{F'}\coloneqq\mathscr{F}(G[V'], \sigma_{V'})\subseteq\mathscr{F}$
for every $V'\subseteq V(G)$. Therefore, the pair
$(\mathscr{R'}, \mathscr{F'})$ is clearly still consistent. By
Theorem~\ref{thm:qBMG-via-triples}, $(G,\sigma)[V']$ is therefore again a
qBMG.  \qed \medskip

\begin{figure}[h]
  \centering
  \includegraphics[width=0.8\linewidth]{./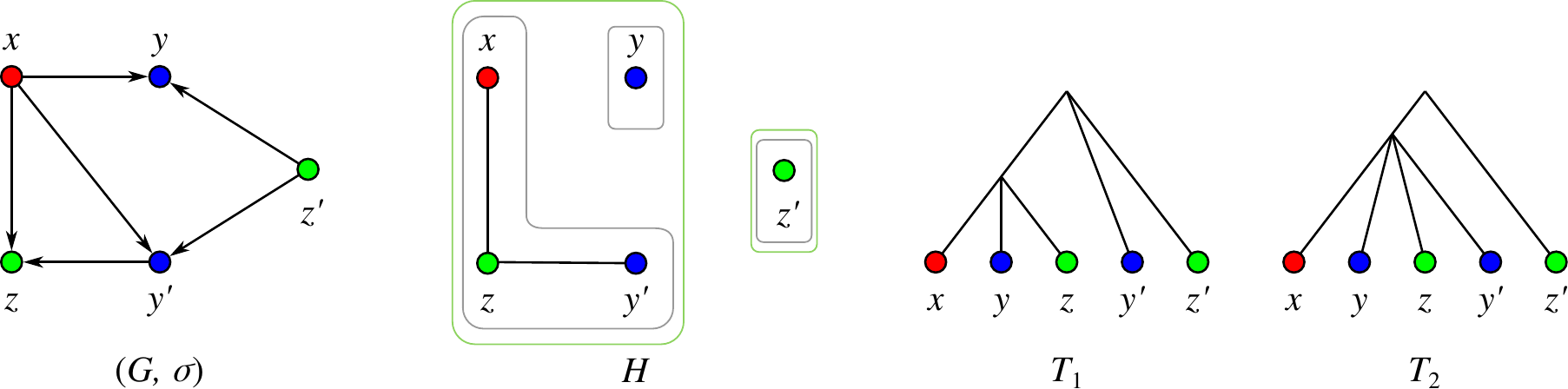}
  \caption{Example for a qBMG $(G,\sigma)$ that is not explained by
    $(T_1=\build(\mathscr{R}(G,\sigma), V(G)), \sigma, u)$ for any
    truncation map $u$. The set of informative triples is
    $\mathscr{R}(G,\sigma)=\{xz|z', y'z|z'\}$. It yields the Aho et
    al.\ graph $H$ in the top-level recursion step of
    \texttt{BUILD}. The gray frames indicate the connected components
    of $H$ and the green boxes indicate the sets in the auxiliary
    partition produced by the \texttt{MTT} algorithm. The tree $T_1$
    cannot explain the edge $xy'$ for any truncation map because
    $\sigma(y)=\sigma(y')$ and $\lca_{T_1}(x,y)\prec \lca_{T_1}(x,y')$. The
    tree produced by \texttt{MTT} with input $\mathscr{R}$ and
    $\mathscr{F}$ is $T_2$.}
  \label{fig:BUILD-fails}
\end{figure}

In~\cite{Schaller:21d}, we obtained a characterization of BMGs that uses
only the informative triples and does not explicitly utilize the set of
forbidden triples. More precisely, a vertex-colored digraph $(G,\sigma)$
with vertex set $L$ is a BMG if and only if (i) the set of informative
triples $\mathscr{R}\coloneqq \mathscr{R}(G,\sigma)$ is consistent, and
(ii) $(G,\sigma) = \bmg(\build(\mathscr{R},L), 
\sigma)$~\cite[Theorem~1]{Schaller:21d}. This provides a procedure to recognize
BMGs that is different from the one based on 
Proposition~\ref{prop:BMG-triple-charac}.

However, the example in Figure~\ref{fig:BUILD-fails} shows that an
analogous result does not hold for qBMGs. The counterexample $(G,\sigma)$
has informative triples $\mathscr{R}(G,\sigma)=\{xz|z', y'z|z'\}$ and
forbidden triples $\mathscr{F}(G,\sigma)=\{xy|y', xy'|y, z'y|y',
z'y'|y\}$. The Aho et al.\ graph $H$ constructed at the top-level recursion
step of \texttt{BUILD} has three connected components. The final output of
\texttt{BUILD} is the tree $T_1$. Since
$\lca_{T_1}(x,y)\prec_{T_1}\lca_{T_1}(x,y')$, the edge $xy'$ can never be
contained in the qBMG explained by $(T_1,\sigma,u)$ for any truncation map
$u$. Hence, condition~(ii) does not hold.  The \texttt{MTT} algorithm also
constructs the Aho et al.\ graph in each recursion step but merges two
components $C$ and $C'$ whenever there is a forbidden triple $ab|c$ such
that $a,b\in C$ and $c\in C'$. This yields an \emph{auxiliary partition}
$\mathcal{D}(L)$ of the leaf set $L$ whose sets serve as input for the
recursive calls instead of the connected components of the Aho et
al.\ graph. In the example, \texttt{MTT} merges the two components
$\{x,y',z\}$ and $\{y\}$ in the top-level recursion step in response to the
forbidden triple $xy'|y\in \mathscr{F}(G,\sigma)$.  The final result is the
tree $T_2$. Also, one can easily verify that a truncation map $u'$ can be
found for $T_2$ such that $(T_2,\sigma,u')$ explains $(G,\sigma)$.  Later
we will use the following property of the trees produced by algorithm
\texttt{MTT}.
\begin{lemma}
  \label{lem:MTT-minimal}
  Let $(\mathscr{R},\mathscr{F})$ be a consistent pair of two triple sets
  defined on leaf set $L$.  Then \texttt{MTT} returns a least resolved tree
  $T$ on $L$ that agrees with $(\mathscr{R},\mathscr{F})$, i.e., there is
  no tree $T'$ on $L$ with $T'<T$ that still agrees with
  $(\mathscr{R},\mathscr{F})$.
\end{lemma}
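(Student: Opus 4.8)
The plan is to argue by contradiction, exploiting the recursive structure of \texttt{MTT} and the way it builds its auxiliary partition. Suppose $T' < T$ agrees with $(\mathscr{R},\mathscr{F})$, where $T$ is the tree returned by \texttt{MTT}. Since $T' < T$, there is an inner edge $e = \parent_T(v)v$ of $T$ whose contraction is performed (possibly among others) when passing from $T$ to $T'$; equivalently, in $T'$ the children of $v$ and the other children of $\parent_T(v)$ all hang directly below a single vertex $w$. I would first translate this into a statement about a single recursion level of \texttt{MTT}: there is a recursive call on some leaf set $L' \subseteq L$ at which \texttt{MTT} partitions $L'$ into blocks $D_1,\dots,D_k$ (the auxiliary partition $\mathcal{D}(L')$, obtained from the connected components of the Aho graph $[\mathscr{R}_{L'},L']$ by merging components linked by a forbidden triple), and such that $T'$ merges at least two of these blocks — say it places all of $D_i \cup D_j$ below one common vertex with no intervening split separating $D_i$ from $D_j$. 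It suffices to derive a contradiction at this level, i.e., to show that no tree on $L'$ that keeps $D_i$ and $D_j$ in a common ``monophyletic'' group without separating them can agree with $(\mathscr{R}_{L'},\mathscr{F}_{L'})$.

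The core of the argument is then a local lemma: if $D_i$ and $D_j$ are distinct blocks of the \texttt{MTT} auxiliary partition $\mathcal{D}(L')$, then any tree $\widetilde{T}$ on $L'$ in which $\lca_{\widetilde{T}}(D_i \cup D_j)$ is a proper descendant of $\lca_{\widetilde{T}}(L')$ — so that $D_i \cup D_j$ sits strictly inside one child subtree — fails to agree with $(\mathscr{R}_{L'},\mathscr{F}_{L'})$. There are two ways blocks get separated in $\mathcal{D}(L')$, and correspondingly two cases. Case (a): $D_i$ and $D_j$ come from genuinely different connected components of the Aho graph $[\mathscr{R}_{L'},L']$, and no forbidden triple merges them (directly or transitively). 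Case (b): $D_i$ and $D_j$ themselves are unions of Aho-graph components glued by forbidden triples, but there is no forbidden triple (and no chain) connecting a component in $D_i$ with one in $D_j$. In case (b) I would invoke the defining property of \texttt{BUILD}/\texttt{MTT}: placing $D_i \cup D_j$ strictly below one vertex of $\widetilde T$ means that at the corresponding split of $\widetilde T$ the part containing $D_i \cup D_j$ is a single side; but then the informative triples within that side are satisfiable (they are, by construction of the components), whereas the forbidden triples with two leaves in $D_i\cup D_j$ and the apex leaf outside are exactly the ones \texttt{MTT} would have been forced to honor by merging — here the subtlety is that the relevant obstruction is a forbidden triple $ab|c$ with $a,b$ on one side, $c$ on the other, so that $\widetilde T$ is forced to display $ab|c$, contradicting agreement with $\mathscr{F}$.

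Concretely, the key step in case (a) is this: if $D_i$ and $D_j$ lie in distinct components of the Aho graph, then there is \emph{no} informative triple $ab|c \in \mathscr{R}_{L'}$ with $a,b$ in (the union of components forming) $D_i\cup D_j$ and $c\in L'\setminus(D_i\cup D_j)$ — otherwise the edge $ab$ would already link the two components. Hence, from the perspective of $(\mathscr{R}_{L'},\mathscr{F}_{L'})$, nothing \emph{requires} $D_i\cup D_j$ to be grouped; but \texttt{MTT} having kept them apart means the corresponding split of the \texttt{MTT} tree was legitimate. The contradiction in case (a) must instead come from the fact that $\widetilde T$, by grouping $D_i \cup D_j$, is forced to display some forbidden triple: since \texttt{MTT} did \emph{not} merge $D_i$ and $D_j$, there is no forbidden triple $ab|c$ with $a,b\in D_i$, $c\in D_j$ \emph{or vice versa} — wait, this is precisely what makes case (a) the delicate one, and I expect it to be the main obstacle. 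The resolution I would pursue: reduce to the minimal counterexample where $T'$ is obtained from $T$ by contracting a \emph{single} edge $e$, distinguished by some triple; use Lemma~\ref{lem:qBMG-inf-forb-triples}-style reasoning, or rather the explicit combinatorics of the \texttt{MTT} recursion in~\cite{He:06}, to show that the block structure at the level of $e$ is \emph{exactly} the partition induced by the children of the upper endpoint of $e$ in every tree agreeing with $(\mathscr{R},\mathscr{F})$, so that contracting $e$ either merges two such forced blocks (impossible) or lies outside the agreeing-tree constraints (but then \texttt{MTT}'s recursion would not have created the split in the first place). I would cite the relevant structural guarantee of \texttt{MTT} from~\cite{He:06} — namely that each split it makes is forced by $(\mathscr{R},\mathscr{F})$ in the sense that the two sides cannot be merged in any agreeing tree — and conclude that every edge of $T$ is distinguished, hence $T$ is least resolved. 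The honest assessment is that the whole proof rests on making precise and citing this ``every split is forced'' property of \texttt{MTT}; once that is in hand the argument is short, and the main work is extracting that property cleanly from the description of the algorithm.
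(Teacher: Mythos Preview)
Your framing contains a directional error that undermines the whole approach. You claim that passing from $T$ to $T'<T$ by contracting an inner edge $e=\parent_T(v)\,v$ means that, at some recursion level $L'$, the tree $T'$ \emph{merges} two blocks $D_i,D_j$ of the \texttt{MTT} auxiliary partition $\mathcal{D}(L')$. But contraction does the opposite. Take $L'=L(T(\parent_T(v)))$: the \texttt{MTT} blocks here are exactly the sets $L(T(w))$ for $w\in\child_T(\parent_T(v))$, one of which is $L(T(v))$. After contracting $e$, the merged vertex in $T'$ has as children the former siblings of $v$ together with the children of $v$; so the partition of $L'$ induced by $T'$ at this vertex \emph{refines} the \texttt{MTT} partition rather than coarsening it. More generally, since $\mathcal{H}(T')\subsetneq\mathcal{H}(T)$, at every cluster common to $T$ and $T'$ the children of $T'$ give a partition at least as fine as that of $T$. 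Your ``local lemma'' about $\lca_{\widetilde T}(D_i\cup D_j)$ being a proper descendant of $\lca_{\widetilde T}(L')$ therefore never applies to $\widetilde T=T'$, which is why case~(a) collapses and you are left appealing to an ``every split is forced'' property of \texttt{MTT} that is not actually stated in \cite{He:06}.

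The paper's proof takes a different and self-contained route: it shows directly that every inner edge $vw$ of $T$ (with $w\prec_T v$) is \emph{distinguished} by some informative triple $ab|c\in\mathscr{R}$, i.e.\ $\lca_T(a,b)=w$ and $\lca_T(\{a,b,c\})=v$. Suppose not. Then at level $L'=L(T(v))$ no triple of $\mathscr{R}_{L'}$ has its two close leaves in different child subtrees $L(T(w_i)),L(T(w_j))$ of $w$ (it would either distinguish $vw$ or fail to be displayed by $T$). Hence every connected component of the Aho graph $[\mathscr{R}_{L'},L']$ that meets $L(T(w))$ lies entirely inside a single $L(T(w_i))$, and the block $L(T(w))\in\mathcal{D}(L')$ can only have been assembled by \texttt{MTT}'s forbidden-triple merges. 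Look at the \emph{first} such merge that joins subsets lying in two different $L(T(w_i))$ and $L(T(w_j))$: it is triggered by some $ab|c\in\mathscr{F}_{L'}$ with $a,b$ still inside one $L(T(w_i))$ and $c$ in another, so $\lca_T(a,b)\preceq_T w_i\prec_T w=\lca_T(\{a,b,c\})$ and $T$ displays a forbidden triple---contradiction. Once every inner edge is distinguished by an informative triple, contracting any edge $e_1$ kills that triple in $T_{e_1}$, and further contractions cannot restore it (displayed triples only shrink under contraction), so no $T'<T$ agrees with $\mathscr{R}$. This ``first crossing merge'' argument is the missing ingredient in your proposal.
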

\begin{proof}
  Since $(\mathscr{R},\mathscr{F})$ is consistent, \texttt{MTT} returns a
  tree $T$ on $L$ that agrees with $(\mathscr{R},\mathscr{F})$, see 
  ~\cite[Theorem~1]{He:06}.  To prove that $T$ is least resolved, we
  show first that every inner edge in $T$ is distinguished by some triple
  in $\mathscr{R}$.  Assume, for contradiction, that there is an inner edge
  $vw\in E(T)$ that is not distinguished by a triple in $\mathscr{R}$.
  Since $vw$ is an inner edge, $w$ has children $w_1,\dots,w_k$, $k\ge 2$.
  Consider the recursion step of \texttt{MTT} on $L'\coloneqq L(T(v))$ and
  $(\mathscr{R}_{L'},\mathscr{F}_{L'})$.  The algorithm constructs an
  auxiliary partition $\mathcal{D}(L')$ by starting with the connected
  components of $[\mathscr{R}_{L'}, L']$ and then merging two components
  $C$ and $C'$ stepwisely as long as there is a forbidden triple
  $ab|c\in \mathscr{F'}$ such that $a,b\in C$ and $c\in C'$. Note that
  $L(T(w_1))\cupdot\dots\cupdot L(T(w_k))= L(T(w))\in \mathcal{D}(L')$.
  There cannot be a triple $ab|c\in\mathscr{R}_{L'}$ with $a\in L(T(w_i))$
  and $b\in L(T(w_j))$ for distinct children $w_i$ and $w_j$ of $w$.  To
  see this, consider that $c\in L'\setminus L(T(w))$.  In this case,
  $\lca_{T}(a,b)=w$ and $\lca_{T}(\{a,b,c\})=v$, i.e., the triple $ab|c$
  would distinguish the edge $vw$.  If, on the other hand, $c\in L(T(w))$,
  then $T$ clearly cannot display the triple $ab|c\in \mathscr{R}_{L'}
  \subseteq \mathscr{R}$ also as a consequence of $\lca_{T}(a,b)=w$.
  Hence, no such triple exists.  Therefore, no two vertices $a\in
  L(T(w_i))$ and $b\in L(T(w_j))$ for distinct children $w_i$ and $w_j$ of
  $w$ are adjacent in $[\mathscr{R}_{L'}, L']$.  It follows that since $w$
  has at least two children and $L(T(w))\in \mathcal{D}(L')$, the set
  $L(T(w))$ must have been emerged as the disjoint union of $l\ge k\ge 2$
  connected components $C_1,\dots,C_l$ of $[\mathscr{R}_{L'}, L']$ in
  response to forbidden triples in $\mathscr{F}_{L'}$.  In particular,
  $C_i\subseteq L(T(w_j))$ for some $1\le j\le k$ for each $1\le i\le k$.
  Consider the series of merging steps that involve sets $C_1,\dots,C_l$
  and unions of these sets. There must be a first merging step of
  two sets $C$ and $C'$ in this series that satisfy $C\subseteq L(T(w_i))$
  and $C'\subseteq L(T(w_j))$ for distinct $w_i,w_j\in \child_T(w)$ in
  response to some triple $ab|c\in \mathscr{F}_{L'}$ such that $a,b\in C$
  and $c\in C'$.  This implies $\lca_{T}(a,b)\preceq_{T}w_i \prec_{T} w
  =\lca_{T}(\{a,b,c\})$, and thus $T$ displays $ab|c\in
  \mathscr{F}_{L'}\subseteq \mathscr{F}$; a contradiction.  Hence, every
  inner edge in $T$ is distinguished by some triple in $\mathscr{R}$.
  
  Let $T'$ be a tree obtained from $T$ by a non-empty series of edge
  contractions, say of edges $e_1,\dots, e_m$. By the arguments above,
  $e_1$ is distinguished by a triple $ab|c\in \mathscr{R}$.  Therefore, we
  have $\lca_{T_{e_1}}(a,b)=\lca_{T_{e_1}}(\{a,b,c\})$ after contraction of
  $e_1$, and thus, $ab|c$ is not displayed by $T_{e_1}$.  Since further
  contraction of edges does not introduce newly displayed triples, 
  see~\cite[Theorem~1]{Bryant:95}. It follows that $T'$ does not display
  $\mathscr{R}$ and hence $T$ is least resolved.
\end{proof}

Finally, we point out that increasing the number of colors by splitting a
color class preserves the qBMG property.
\begin{proposition}
  If $(G,\sigma)$ is a $\ell$-qBMG with $\sigma(L)=\ell<|L|$ colors, then
  there is a proper coloring $\sigma'$ such that $(G,\sigma')$ is an
  $(\ell+1)$-qBMG.
  \label{prop:numcol}
\end{proposition}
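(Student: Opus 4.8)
The plan is to invoke the triple-based characterization of qBMGs (Theorem~\ref{thm:qBMG-via-triples}): it will be enough to exhibit a proper coloring $\sigma'$ with $\ell+1$ color classes for which the pair $(\mathscr{R}(G,\sigma'),\mathscr{F}(G,\sigma'))$ remains consistent, and I aim to obtain this for free by choosing $\sigma'$ so that both triple sets only shrink. Concretely, since $|\sigma(L)|=\ell<|L|$, the pigeonhole principle yields a color $s$ with $|\sigma^{-1}(s)|\ge 2$; I fix a vertex $v\in\sigma^{-1}(s)$ and a fresh symbol $s^*\notin\sigma(L)$, and define $\sigma'(v)\coloneqq s^*$ and $\sigma'(x)\coloneqq\sigma(x)$ for $x\ne v$. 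Then $\sigma'$ is again proper --- two vertices sharing a $\sigma'$-color must both differ from $v$, hence already shared a $\sigma$-color and are non-adjacent in $G$ --- and $|\sigma'(L)|=\ell+1$, because $\sigma^{-1}(s)\setminus\{v\}\ne\emptyset$ keeps $s$ in use while $s^*$ is brand new.

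The heart of the proof is then the two inclusions $\mathscr{R}(G,\sigma')\subseteq\mathscr{R}(G,\sigma)$ and $\mathscr{F}(G,\sigma')\subseteq\mathscr{F}(G,\sigma)$. Take any triple $ab|b'$ in one of the $\sigma'$-sets. Its three leaves are pairwise distinct and $\sigma'(b)=\sigma'(b')$; since $v$ is the only vertex of color $s^*$, this forces $b\ne v$ and $b'\ne v$, whence $\sigma(b)=\sigma(b')$. It remains to check $\sigma(a)\ne\sigma(b)$: if $a\ne v$ this is inherited from $\sigma'(a)\ne\sigma'(b)$, and if $a=v$ the defining conditions of both triple sets include $vb\in E(G)$, so properness of the original coloring $\sigma$ gives $\sigma(v)\ne\sigma(b)$. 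As the remaining defining conditions ($ab\in E$ and $ab'\notin E$ for informative, $ab,ab'\in E$ for forbidden) involve only edges and not the coloring, $ab|b'$ lies in the corresponding $\sigma$-set. I expect the case $a=v$ to be the only genuine subtlety --- it is exactly where properness of $\sigma$ enters --- and the rest is bookkeeping about which leaf can carry the new color.

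With the inclusions established, the conclusion is short: as $(G,\sigma)$ is a qBMG, Theorem~\ref{thm:qBMG-via-triples} provides a tree $T$ on $L$ agreeing with $(\mathscr{R}(G,\sigma),\mathscr{F}(G,\sigma))$; by the inclusions the same $T$ agrees with $(\mathscr{R}(G,\sigma'),\mathscr{F}(G,\sigma'))$, so this pair is consistent, and since $\sigma'$ is proper another application of Theorem~\ref{thm:qBMG-via-triples} shows that $(G,\sigma')$ is a qBMG --- an $(\ell+1)$-qBMG by the color count above. (An alternative would be to recolor $v$ directly in an explanation $(T,\sigma,u)$ of $(G,\sigma)$ and repair the truncation map --- e.g.\ $u'(x,s^*)\coloneqq\lca_T(x,v)$ if $xv\in E(G)$ and $u'(x,s^*)\coloneqq x$ otherwise, together with $u'(x,s)\coloneqq x$ whenever $N_G(x,s)=\{v\}$ --- but ruling out spurious new edges that way requires a fussier case analysis than the triple-set route, so I would keep the argument above.)
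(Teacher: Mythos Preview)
Your proof is correct and takes a genuinely different route from the paper's. The paper proceeds by direct construction: it partitions a non-singleton color class $L_s=L_{s'}\cupdot L_{s''}$, keeps the same explaining tree $T$, and hand-builds a new truncation map $u'$ by setting $u'(x,s')=x$ or $u'(x,s'')=x$ whenever the corresponding piece of $N_s(x)$ happens to be empty, and $u'(x,\cdot)=u(x,s)$ otherwise. It then checks that the resulting $\qbmg(T,\sigma',u')$ reproduces $G$. Your argument instead isolates a single vertex $v$, gives it a fresh color, and exploits the triple characterization (Theorem~\ref{thm:qBMG-via-triples}): the key observation that only $v$ carries the new color forces $b,b'\ne v$ in any $\sigma'$-triple, so both $\mathscr{R}$ and $\mathscr{F}$ can only shrink, and consistency is inherited. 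This is cleaner and avoids the case analysis on truncation values---essentially the ``fussier case analysis'' you yourself flag in the parenthetical alternative, which is in fact exactly what the paper does. The paper's approach has the minor advantage of being self-contained (it does not invoke Theorem~\ref{thm:qBMG-via-triples}) and of handling an arbitrary split of the color class in one stroke, but for the stated proposition your argument is shorter and more transparent.
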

\begin{proof}
  Write $L_s\coloneqq\{x\in L\mid\sigma(x)=s\}$. If $\ell<|L|$, there is a
  color $s$ with $|L_s|\ge2$. Let $\sigma'$ be a coloring of $G$ obtained
  by arbitrarily partitioning $L_s=L_{s'}\cupdot L_{s''}$ into two
  non-empty subsets with new colors $s'$ and $s''$.  Let $(T,\sigma,u)$ be
  an explanation of $(G,\sigma)$. For the leaf-colored tree $(T,\sigma')$,
  we construct the truncation map $u'$ as follows: For $x\in L_s$, we set
  $u'(x,s')=u'(x,s'')=x$.  For $x\notin L_s$, we set $u'(x,s')=x$ if
  $N_s(x)\cap L_{s'}=\emptyset$ and $u'(x,s'')=x$ if
  $N_s(x)\cap L_{s''}=\emptyset$; otherwise $u'(x,s')=u(x,s)$ and
  $u'(x,s'')=u(x,s)$, respectively. Finally, for $x\in V$ and
  $t\notin\{s',s''\}$, we set $u'(x,t)=u(x,t)$. It is not difficult to
  check that, by construction, $N_G(x,s)=N_G(x,s')\cup N_G(x,s'')$ for all
  $x\in V$, while the out-neighborhoods of all vertices and all other
  colors remain unchanged. Thus $(G,\sigma')=\qbmg(T,\sigma',u')$.  Since
  $\sigma'$ is a proper $(\ell+1)$-coloring, $(G,\sigma')$ is an
  $(\ell+1)-qBMG$.
\end{proof}

\section{Least Resolved Trees for qBMGs}
\label{sec:lrt}

In general, many trees explain a given qBMG $(G,\sigma)$. Among these
trees, the least resolved ones are of particular interest since they
describe the phylogenetic information implicit in $(G,\sigma)$ without
adding internal vertices and, thus, evolutionary events that are not
implied by the available best matches. \emph{Least-resolved trees} (LRTs)
thus are the most parsimonious explanations.  It is of particular
interest whether there is a unique LRT and thus unambiguous phylogenetic
information or whether there are conflicting explanations in the form of
mutually inconsistent trees.  BMGs are explained by a unique LRT
$\widehat{T}$~\cite{Geiss:19a}. As we shall see below, this is no
longer true for qBMGs.  Although we lose uniqueness, the LRTs of qBMGs
still have some convenient properties. In the following paragraphs, we
briefly summarize the situation.

\begin{definition}
  Let $(T,\sigma,u)$ be a leaf-colored tree with truncation map
  $u$. An edge $e\in E(T)$ is \emph{redundant} with respect to
  the explained qBMGs if there is a truncation map $u'$ such that
  $\qbmg(T_e,\sigma,u')=\qbmg(T,\sigma,u)$, i.e., $(T,\sigma,u)$ and
  $(T_e,\sigma,u')$ explain the same qBMG.  An edge $e\in E(T)$ that
  is not redundant is \emph{essential} with respect to the
  explained qBMGs.  Moreover, $(T,\sigma,u)$ is \emph{least resolved} if
  there is no tree $T'<T$ and truncation map $u'$ such that
  $(T',\sigma,u')$ explains $\qbmg(T,\sigma,u)$.
\end{definition}

\begin{figure}[tb]
  \centering
  \includegraphics[width=0.5\linewidth]{./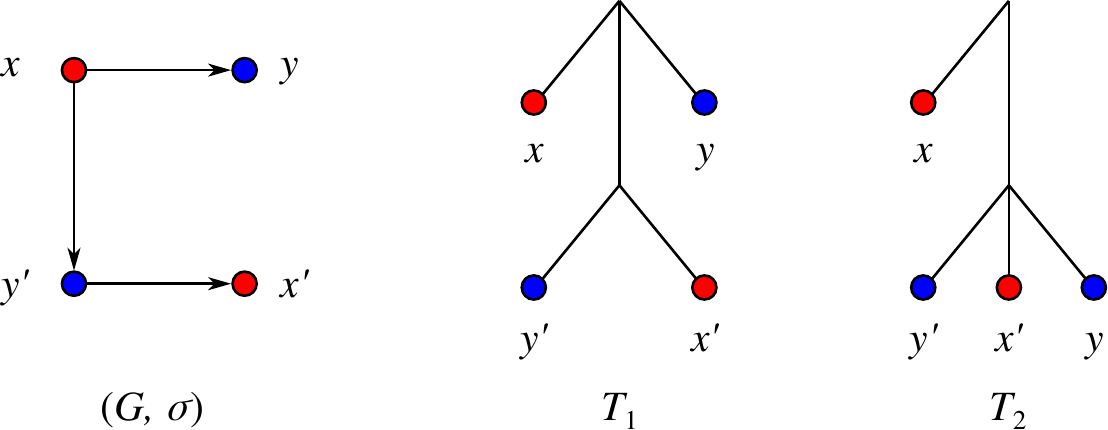}
  \caption{A connected 2-colored qBMG $(G,\sigma)$ with least resolved
    trees (with truncation maps) $(T_1,\sigma,u_1)$ and
    $(T_2,\sigma,u_2)$. For $i\in\{1,2\}$, the truncation maps can be
    chosen such that $u_i(v, r)=\rho_{T_i}$ for all $v\in V(G)$ and
    $r\in S\setminus\{\sigma(v)\}$ if $N(x,r)\ne\emptyset$, and
    $u_i(v, r)=v$ otherwise.}
  \label{fig:LRTs-not-unique}
\end{figure}

\begin{lemma}
  \label{lem:essential-edges}
  Let $(G,\sigma)$ be a qBMG explained by $(T,\sigma,u)$ and let
  $e=vw\in E(T)$ with $w\prec_T v$. Then $e$ is essential if and only if
  $w\in L(T)$ or there are $x,y,y'\in L(T)$ such that $w=\lca_T(x,y)$,
  $xy\in E(G)$, $v=\lca_T(x,y')$ and $\sigma(y')=\sigma(y)$.
\end{lemma}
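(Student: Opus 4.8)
The plan is to prove both implications by contraposition, turning ``$e$ essential'' into a statement about which trees obtained by contracting $e$ fail to explain $(G,\sigma)$. Write $\widehat{T} \coloneqq T_e$ for the tree obtained by contracting $e=vw$, identify the merged vertex again with $v$, and note that the ancestor order $\preceq_{\widehat{T}}$ agrees with $\preceq_T$ except that $w$ has disappeared and everything in $L(T(w))$ now has $v$ (rather than $w$) as its ancestor where $w$ used to sit. The key elementary fact is: for $a,b\in L$, $\lca_{\widehat{T}}(a,b)=\lca_T(a,b)$ unless $\lca_T(a,b)=w$, in which case $\lca_{\widehat{T}}(a,b)=v$; moreover best-match status in $\widehat{T}$ can only change for pairs whose $\lca$ was $w$. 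So the only candidate edges of $(G,\sigma)$ that a truncation map on $\widehat{T}$ might be unable to reproduce (or would be forced to add) are those $xy\in E(G)$ with $\lca_T(x,y)=w$, together with the ``competitor'' pairs $xy'$ with $\sigma(y')=\sigma(y)$ and $\lca_T(x,y')=v$.

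For the ``if'' direction, suppose first $w\in L(T)$: then $w=\lca_T(w,w)$ is a leaf, the edge $vw$ is outer, and contracting it is not even an admissible operation in a phylogenetic tree (it would delete the leaf $w$), so $e$ is trivially essential; more precisely there is no tree $T'<T$ on the \emph{same} leaf set with $vw$ contracted, so the condition of the lemma is vacuously forcing essentiality. (I expect the paper really intends this case to capture outer edges, so I would argue: any $T'<T$ that contracts the outer edge $vw$ has fewer leaves, hence cannot explain a qBMG on leaf set $L$.) Now suppose there are $x,y,y'$ with $w=\lca_T(x,y)$, $xy\in E(G)$, $v=\lca_T(x,y')$, $\sigma(y')=\sigma(y)$. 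In $\widehat{T}$ we have $\lca_{\widehat{T}}(x,y)=\lca_{\widehat{T}}(x,y')=v$, so $y$ is \emph{not} a best match of $x$ in $\widehat{T}$ (it is tied-or-beaten by $y'$, and in fact $y'$ is at least as good); by Definition~\ref{def:qbm}(i) no truncation map on $\widehat{T}$ can put $xy$ into the edge set, yet $xy\in E(G)$. Hence $\widehat{T}$ with any $u'$ fails to explain $(G,\sigma)$, so $e$ is essential. (The same argument works verbatim if the hypothesis is stated with the roles of inner/outer handled as above.)

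For the ``only if'' direction — which I expect to be the main work — assume $w$ is an inner vertex and that \emph{no} triple $(x,y,y')$ as in the statement exists; I must produce a truncation map $u'$ on $\widehat{T}=T_e$ with $\qbmg(\widehat{T},\sigma,u')=(G,\sigma)$, proving $e$ redundant. The natural choice is $u'(x,s)\coloneqq x$ if $N_G(x,s)=\emptyset$ and $u'(x,s)\coloneqq\rho_{\widehat{T}}$ otherwise, exactly as in the proof of Theorem~\ref{thm:qBMG-via-triples}. Then I must check $N_{\widehat G}(x,s)=N_G(x,s)$ for all $x\in L$, $s\in\sigma(L)$, where $\widehat G\coloneqq\qbmg(\widehat{T},\sigma,u')$. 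By the $\lca$-fact above, for pairs with $\lca_T(x,y)\neq w$ nothing changes: such $y$ is a best match in $T$ iff in $\widehat T$, and since $(T,\sigma,u)$ explains $(G,\sigma)$, one checks $y\in N_{\widehat G}(x,s)\iff y\in N_G(x,s)$ using the same $\mathscr{R}/\mathscr{F}$-agreement reasoning as in Theorem~\ref{thm:qBMG-via-triples} (note $T$, hence $\widehat T$ — via Lemma~\ref{lem:MTT-minimal}-style monotonicity under contraction, \cite[Theorem~1]{Bryant:95} — still displays $\mathscr{R}(G,\sigma)$ and avoids $\mathscr{F}(G,\sigma)$; here one uses that the contracted edge was not distinguished by any informative or forbidden triple, which is exactly what the ``no $(x,y,y')$'' hypothesis guarantees). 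The only delicate pairs are those $y$ with $\lca_T(x,y)=w$: the hypothesis says that for every $x$ with some out-neighbor $y$ at $\lca=w$, there is \emph{no} $y'$ of the same color with $\lca_T(x,y')=v$ — so contracting $e$ does not create a new, strictly-better or equally-good competitor for $y$, i.e. $y$ remains a best match of $x$ in $\widehat T$, hence stays in $N_{\widehat G}(x,s)$; and conversely contraction never removes best matches. One also has to rule out that contraction turns some previous \emph{non}-edge $xy'$ into an edge: that would require $y'$ to become a best match in $\widehat T$ while it was not one in $T$, which again forces $\lca_T(x,y')=v$ and some $y$ with $\lca_T(x,y)=w$, $xy\in E(G)$ — precisely the forbidden configuration. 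Assembling these cases gives $\widehat G=G$.

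The main obstacle, as flagged, is the ``only if'' bookkeeping: one must simultaneously argue (a) no existing edge of $G$ with $\lca=w$ is lost, (b) no new edge is created, and (c) all unaffected pairs behave as before, and the cleanest way to get (a) and (b) is to translate the hypothesis into the statement ``$e$ is distinguished by neither an informative nor a forbidden triple of $(G,\sigma)$'' and then invoke that $\widehat T$ still agrees with $(\mathscr{R}(G,\sigma),\mathscr{F}(G,\sigma))$, so Theorem~\ref{thm:qBMG-via-triples} (applied with $\widehat T$ in the role of the agreeing tree) directly yields a truncation map explaining $(G,\sigma)$ on $\widehat T$. That reduction is, I expect, the slick route the authors take.
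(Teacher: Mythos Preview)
Your ``only if'' direction is exactly the paper's route: assuming $w$ is inner and no triple $(x,y,y')$ exists, one shows that $T_e$ still agrees with $(\mathscr{R}(G,\sigma),\mathscr{F}(G,\sigma))$ (forbidden triples are automatic since $T_e<T$ displays only a subset of the triples $T$ displays; an informative triple $ab|b'$ can only be lost if $\lca_T(a,b)=w$ and $\lca_T(a,b')=v$, which is precisely the excluded configuration), and then Theorem~\ref{thm:qBMG-via-triples} applied to $T_e$ yields the truncation map. Your final paragraph identifies this reduction correctly.

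Your ``if'' direction, however, contains a genuine error. After contraction you have $\lca_{\widehat T}(x,y)=\lca_{\widehat T}(x,y')=v$, and you conclude that $y$ is \emph{not} a best match of $x$ in $\widehat T$ because it is ``tied-or-beaten by $y'$''. That is false: best matches in Definition~\ref{def:bm} are defined by $\lca(x,y)\preceq\lca(x,y'')$ for all $y''$ of the same color, so ties are permitted, and contraction never creates a strictly closer competitor. Thus $y$ \emph{remains} a best match of $x$ in $\widehat T$; what changes is that $y'$ becomes one too. The correct argument (and the paper's) is that since $\lca_{\widehat T}(x,y)=\lca_{\widehat T}(x,y')$, for every truncation map $u'$ we have $xy\in E(\qbmg(\widehat T,\sigma,u'))$ if and only if $xy'\in E(\qbmg(\widehat T,\sigma,u'))$. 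But $xy\in E(G)$ while $xy'\notin E(G)$ (the latter because $\lca_T(x,y)=w\prec_T v=\lca_T(x,y')$ in $T$), so no $u'$ can reproduce $(G,\sigma)$. Your sentence ``no truncation map on $\widehat T$ can put $xy$ into the edge set'' is therefore wrong as stated; the obstruction is that $xy'$ would be forced in alongside $xy$.
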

\begin{proof}
  We start with the \emph{if}-direction.  Clearly, if $w\in L(T)$, then
  $L(T)\ne L(T_e)$ and thus, there is no truncation map $u$ such that
  $(T_e,\sigma,u')$ still explains $(G,\sigma)$.  Now suppose there are
  $x,y\in L(T)$ such that $w=\lca_T(x,y)$ and $xy\in E(G)$, and
  $y'\in L(T)$ with $\lca_T(x,y')=v$ and $\sigma(y')=\sigma(y)$.  Since
  $\lca_T(x,y)=w\prec_T v=\lca_T(x,y')$, we have $xy'\notin E(G)$.  After
  contraction of $e$, we have $\lca_{T_e}(x,y)=\lca_{T_e}(x,y')$.  Hence,
  we have $xy\in E(\qbmg(T_e,\sigma,u'))$ if and only if
  $xy'\in E(\qbmg(T_e,\sigma,u'))$ for any truncation map $u'$. Therefore,
  $(G,\sigma)$ cannot be explained by any tree of the form
  $(T_e,\sigma,u')$ and thus $e$ is essential.
  
  We continue with the \emph{only-if}-direction.  To this end suppose, for
  contraposition, that $e=vw$ is an inner edge and that there are no
  $x,y, y'\in L(T)$ such that $w=\lca_T(x,y)$, $xy\in E(G)$,
  $v=\lca_T(x,y')$ and $\sigma(y')=\sigma(y)$. By
  Lemma~\ref{lem:qBMG-inf-forb-triples}, $T$ agrees with
  $(\mathscr{R}(G,\sigma), \mathscr{F}(G,\sigma))$.  Since $T_e<T$, every
  triple that is displayed by $T_e$ is also displayed by $T$, 
  see~\cite[Theorem~6.4.1]{Semple:03}. Therefore, $T_e$ does not display any
  of the forbidden triples in $\mathscr{F}(G,\sigma)$.  Now suppose that
  $T_e$ does not display some triple $ab|b'\in\mathscr{R}(G,\sigma)$. By
  definition, we have $ab\in E(G)$, $ab'\notin E(G)$ and
  $\sigma(a)\ne\sigma(b)=\sigma(b')$.  By
  Lemma~\ref{lem:qBMG-inf-forb-triples}, $T$ displays $ab|b'$, i.e.,
  $\lca_{T}(a,b)\prec_{T} \lca_{T}(a,b')=\lca_{T}(b,b')$.  Since $T$ and
  $T_e$ differ only by contraction of $e$ and $T_e$ does not display
  $ab|b'$, it must hold that $\lca_{T}(a,b)=w$ and $\lca_{T}(a,b')=v$.
  Together with $\sigma(b)=\sigma(b')$ and $ab\in E(G)$, this contradicts
  the assumption. Hence, $T_e$ displays all triples in
  $\mathscr{R}(G,\sigma)$.  In summary, $T_e$ agrees with
  $(\mathscr{R}(G,\sigma), \mathscr{F}(G,\sigma))$. By
  Theorem~\ref{thm:qBMG-via-triples}, there is a truncation map $u'$ such
  that $(T_e,\sigma,u')$ explains the qBMG $(G,\sigma)$. Hence, $e$ is
  redundant and thus not essential.
\end{proof}
The conditions $w=\lca_T(x,y)$, $v=\lca_T(x,y')$, and $\sigma(y')=\sigma(y)$
in Lemma~\ref{lem:essential-edges} imply that $xy'\notin E(G)$. Together
with $xy\in E(G)$, we obtain
\begin{corollary}
  Let $(G,\sigma)$ be a qBMG explained by $(T,\sigma,u)$. An inner edge $vw\in 
  E(T)$ is essential if and only if it is distinguished by an informative 
  triple $xy|y'\in\mathscr{R}(G,\sigma)$.
\end{corollary}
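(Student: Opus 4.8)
The plan is to show that the corollary follows almost immediately from Lemma~\ref{lem:essential-edges} together with the definition of ``distinguished by a triple'' and the definition of informative triples $\mathscr{R}(G,\sigma)$. First I would restrict attention to inner edges $vw$ (with $w\prec_T v$), since that is the hypothesis of the corollary. For such an edge, Lemma~\ref{lem:essential-edges} tells us that $vw$ is essential if and only if there exist $x,y,y'\in L(T)$ with $w=\lca_T(x,y)$, $xy\in E(G)$, $v=\lca_T(x,y')$, and $\sigma(y')=\sigma(y)$. So the entire task is to recognize that this condition is literally a restatement of ``$vw$ is distinguished by some triple $xy|y'\in\mathscr{R}(G,\sigma)$.''

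The key step is to unwind the two definitions and match them up. Recall that an edge $uv$ is distinguished by a triple $ab|c$ iff $\lca_T(a,b)=v$ and $\lca_T(\{a,b,c\})=u$. So $vw$ (with $w\prec_T v$) is distinguished by $xy|y'$ precisely when $\lca_T(x,y)=w$ and $\lca_T(\{x,y,y'\})=v$; the latter, combined with $\lca_T(x,y)=w\prec_T v$, is equivalent to $\lca_T(x,y')=v$ (equivalently $\lca_T(y,y')=v$). On the other side, $xy|y'\in\mathscr{R}(G,\sigma)$ means $\sigma(x)\ne\sigma(y)=\sigma(y')$, $xy\in E(G)$, and $xy'\notin E(G)$. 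The forward direction is then clean: if $vw$ is distinguished by $xy|y'\in\mathscr{R}(G,\sigma)$, we read off $w=\lca_T(x,y)$, $v=\lca_T(x,y')$, $\sigma(y')=\sigma(y)$ and $xy\in E(G)$, so Lemma~\ref{lem:essential-edges} gives that $vw$ is essential.

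For the converse, suppose the inner edge $vw$ is essential. By Lemma~\ref{lem:essential-edges} (the case $w\in L(T)$ is excluded since $vw$ is an inner edge) there are $x,y,y'\in L(T)$ with $w=\lca_T(x,y)$, $xy\in E(G)$, $v=\lca_T(x,y')$, $\sigma(y')=\sigma(y)$. I would then invoke the remark just before the corollary: since $\lca_T(x,y)=w\prec_T v=\lca_T(x,y')$ and $\sigma(y)=\sigma(y')$, the fact that $(T,\sigma,u)$ explains $(G,\sigma)$ forces $xy'\notin E(G)$ --- indeed $y$ is a strictly closer leaf of color $\sigma(y')$ to $x$ than $y'$ is, so $y'$ is not even a best match of $x$. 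Also $\sigma(x)\ne\sigma(y)$ because $xy\in E(G)$ and $(G,\sigma)$ is properly colored. Hence $xy|y'$ satisfies all three conditions defining $\mathscr{R}(G,\sigma)$, i.e. $xy|y'\in\mathscr{R}(G,\sigma)$, and the conditions $w=\lca_T(x,y)$, $v=\lca_T(x,y')$ say exactly that $vw$ is distinguished by this triple.

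I do not expect a genuine obstacle here; the only thing requiring a little care is the bookkeeping translating ``$v=\lca_T(x,y')$ with $w\prec_T v$'' into ``$\lca_T(\{x,y,y'\})=v$'' as required by the definition of ``distinguished'', and checking that $x$, $y$, $y'$ are pairwise distinct (which is automatic: $y\ne y'$ since $xy\in E(G)$ but $xy'\notin E(G)$, and $x\notin\{y,y'\}$ by proper coloring). Everything else is a direct substitution of definitions, so the proof is essentially a two-line deduction from Lemma~\ref{lem:essential-edges} plus the displayed remark preceding the corollary.
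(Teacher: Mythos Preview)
Your proposal is correct and follows essentially the same approach as the paper: the corollary is stated immediately after the observation that the conditions $w=\lca_T(x,y)$, $v=\lca_T(x,y')$, and $\sigma(y')=\sigma(y)$ in Lemma~\ref{lem:essential-edges} force $xy'\notin E(G)$, and together with $xy\in E(G)$ this is precisely the statement that $xy|y'\in\mathscr{R}(G,\sigma)$ distinguishes $vw$. Your write-up is, if anything, more careful about the bookkeeping (pairwise distinctness, translating $v=\lca_T(x,y')$ into $\lca_T(\{x,y,y'\})=v$) than the paper, which leaves these as implicit.
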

The following result shows that it is possible to characterize least
resolved trees for qBMGs by the absence of redundant edges:
\begin{lemma}
  \label{lem:LRT}
  A tree $(T,\sigma,u)$ is least resolved if and only if it does not
  contain a redundant edge.
\end{lemma}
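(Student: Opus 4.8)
The plan is to prove the two directions separately, noting that one direction is essentially immediate from the definition and the other requires the characterization of essential edges from Lemma~\ref{lem:essential-edges}. For the \emph{only-if}-direction, suppose $(T,\sigma,u)$ is least resolved and, for contraposition, assume it contains a redundant edge $e$. By the definition of redundant, there is a truncation map $u'$ with $\qbmg(T_e,\sigma,u')=\qbmg(T,\sigma,u)=(G,\sigma)$; since $e$ is an inner edge (outer edges are never redundant, as contracting them would delete a leaf), we have $T_e<T$, and $(T_e,\sigma,u')$ explains $(G,\sigma)$. This directly contradicts the definition of $(T,\sigma,u)$ being least resolved. Hence a least resolved tree has no redundant edge.

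For the \emph{if}-direction, suppose $(T,\sigma,u)$ contains no redundant edge; equivalently, by Lemma~\ref{lem:essential-edges}, every edge of $T$ is essential. I want to show $(T,\sigma,u)$ is least resolved, i.e., there is no $T'<T$ and truncation map $u'$ with $(T',\sigma,u')$ explaining $(G,\sigma)$. Assume such a $T'$ exists. Since $T'<T$, there is a sequence of edge contractions taking (a restriction of) $T$ to $T'$; as $T'$ and $T$ have the same leaf set $L$, no leaf is removed and $T'$ is obtained from $T$ by contracting a nonempty set of inner edges $e_1,\dots,e_m$. Consider $e_1=vw$. By hypothesis $e_1$ is essential, so by Lemma~\ref{lem:essential-edges} (the inner-edge case) there are $x,y,y'\in L$ with $w=\lca_T(x,y)$, $xy\in E(G)$, $v=\lca_T(x,y')$, and $\sigma(y')=\sigma(y)$. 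As noted after that lemma, $xy'\notin E(G)$, so $xy|y'\in\mathscr{R}(G,\sigma)$, and $e_1$ is distinguished by this informative triple. After contracting $e_1$ we get $\lca_{T_{e_1}}(x,y)=\lca_{T_{e_1}}(x,y')$, so $T_{e_1}$ no longer displays $xy|y'$; and by monotonicity of displayed triples under further contraction (e.g.\ \cite[Theorem~6.4.1]{Semple:03}), $T'$ does not display $xy|y'$ either. By Lemma~\ref{lem:qBMG-inf-forb-triples} applied to the explanation $(T',\sigma,u')$, however, $T'$ must display every triple in $\mathscr{R}(G,\sigma)$ — a contradiction. Therefore no such $T'$ exists and $(T,\sigma,u)$ is least resolved.

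The main obstacle — and the reason this proof is short rather than trivial — is precisely the fact that the definition of least resolved for qBMGs quantifies over \emph{arbitrary} truncation maps $u'$ on $T'$, so one cannot simply track out-neighborhoods as one would for BMGs. The mechanism that handles this is Lemma~\ref{lem:qBMG-inf-forb-triples}, which says that \emph{any} explaining tree must display all informative triples regardless of the truncation map; combined with the characterization of essential edges as exactly those distinguished by an informative triple, this forces every contraction to destroy some required triple. One small point to be careful about is the bookkeeping when $T'$ is formally obtained via a \emph{restriction} followed by contractions: since $L(T')=L(T)$, the restriction step is vacuous (it restricts to all of $L$ and suppresses nothing, because $T$ is phylogenetic), so $T'$ genuinely arises from $T$ by inner-edge contractions alone, which is what the argument above uses.
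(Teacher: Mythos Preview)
Your proof is correct and follows essentially the same approach as the paper. The only minor difference is in how the \emph{if}-direction is finished: you invoke Lemma~\ref{lem:qBMG-inf-forb-triples} to conclude that any explaining tree must display the informative triple $xy|y'$, whereas the paper argues directly via the hierarchy $\mathcal{H}(T_{e_1})\supset\mathcal{H}(T')$ that $\lca_{T'}(x,y)=\lca_{T'}(x,y')$ and hence $y$ and $y'$ must be simultaneously in or out of $N_G(x,\sigma(y))$---but this is the same argument unpacked, so the two proofs are equivalent in substance.
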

\begin{proof}
  Let $(G,\sigma)$ be the qBMG explained by $(T,\sigma,u)$.
  If $e\in E(T)$ is redundant, there is a truncation map $u'$ such that
  $(T_e,\sigma,u')$ explains $(G,\sigma)$. Since $T_e<T$,
  $(T,\sigma,u)$ is not least resolved.
  
  For the converse, suppose $(T,\sigma,u)$ does not contain redundant edges
  and, for contradiction, assume that there is a tree $T'<T$ and a
  truncation map $u'$ such that $(T',\sigma,u')$ explains $(G,\sigma)$.
  Therefore, we have $L(T)=L(T')$, and thus, $T'$ is obtained from $T$ by a
  series of at inner-edge contractions $e_1,\dots, e_k$ with $k\ge 2$ since
  otherwise $e_1$ would be redundant by definition.  Since $e_1=vw$ with
  $w\prec_T v$ is essential, we have, by Lemma~\ref{lem:essential-edges},
  $x,y\in L(T)$ such that $w=\lca_T(x,y)$ and $y\in N_{G}(x,\sigma(y))$ and
  $y'\in L(T)$ with $\lca_T(x,y')=v$ and $\sigma(y')=\sigma(y)$.  In
  particular, therefore, $y'\notin N_{G}(x,\sigma(y))$.  After contraction
  of $e_1$ in $T$, we have $\lca_{T_{e_1}}(x,y)=\lca_{T_{e_1}}(x,y')$.
  Hence, there is no set $A\in \mathcal{H}(T_{e_1})$ such that (a)
  $x,y\in A$ and $y'\notin A$ or (b) $x,y'\in A$ and $y\notin A$.  Since
  $T'<T_{e_1}$ and $L(T)=L(T_{e_1})$, we have
  $\mathcal{H}(T')\subset\mathcal{H}(T_{e_1})$.  Therefore, there is also
  no set $A\in \mathcal{H}(T')$ such that (a) or (b) holds.  It follows
  that $\lca_{T'}(x,y)=\lca_{T'}(x,y')$.  Since $(T',\sigma,u')$ explains
  $(G,\sigma)$, this immediately implies that either
  $y,y'\in N_{G}(x,\sigma(y))$ or $y,y'\notin N_{G}(x,\sigma(y))$; a
  contradiction.
\end{proof}

Even though Lemma~\ref{lem:LRT} generalizes the situation in BMGs, we no
longer have uniqueness of the least resolved
tree. Figure~\ref{fig:LRTs-not-unique} gives a simple counterexample with
only two colors. Nevertheless, as a consequence of
Theorem~\ref{thm:qBMG-via-triples} and Lemma~\ref{lem:MTT-minimal}, we
obtain
\begin{proposition}
  \label{prop:MTT-qBMG-least-resolved}
  Let $(G,\sigma)$ be a qBMG with vertex set $L$. Then $(T,\sigma,u)$, where 
  $T$ is the tree produced by algorithm \texttt{MTT} with input 
  $(\mathscr{R}(G,\sigma), \mathscr{F}(G,\sigma))$ and $L$ and $u$ is a 
  suitable truncation map is least resolved and explains $(G,\sigma)$.
\end{proposition}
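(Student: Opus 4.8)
The plan is to combine Theorem~\ref{thm:qBMG-via-triples}, Lemma~\ref{lem:MTT-minimal}, and Lemma~\ref{lem:LRT}. Since $(G,\sigma)$ is a qBMG, Theorem~\ref{thm:qBMG-via-triples} guarantees that the pair $(\mathscr{R}(G,\sigma),\mathscr{F}(G,\sigma))$ is consistent, so \texttt{MTT} with input $(\mathscr{R}(G,\sigma),\mathscr{F}(G,\sigma))$ and $L$ indeed returns a tree $T$ on $L$ that agrees with this pair. By the second part of Theorem~\ref{thm:qBMG-via-triples}, there is a truncation map $u$ on $T$ such that $(T,\sigma,u)$ explains $(G,\sigma)$; fix this $u$. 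It therefore only remains to verify that $(T,\sigma,u)$ is least resolved.

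To establish least-resolvedness, I would appeal to Lemma~\ref{lem:LRT}: it suffices to show that $T$ has no redundant edge with respect to the explained qBMG. Suppose, for contradiction, that some edge $e=vw$ of $T$ is redundant. Since the leaf set is fixed, $e$ must be an inner edge. By Lemma~\ref{lem:essential-edges} (or its corollary), an inner edge is essential precisely when it is distinguished by some informative triple $xy|y'\in\mathscr{R}(G,\sigma)$; equivalently, a redundant inner edge is distinguished by no triple of $\mathscr{R}(G,\sigma)$. On the other hand, Lemma~\ref{lem:MTT-minimal} tells us that the tree produced by \texttt{MTT} is least resolved among all trees on $L$ agreeing with $(\mathscr{R}(G,\sigma),\mathscr{F}(G,\sigma))$, and — more importantly for us — its proof shows that \emph{every inner edge of $T$ is distinguished by some triple in $\mathscr{R}(G,\sigma)$}. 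These two facts are in direct contradiction: the purported redundant edge $e$ would have to be both distinguished and not distinguished by a triple of $\mathscr{R}(G,\sigma)$. Hence $T$ has no redundant edge, and by Lemma~\ref{lem:LRT} the triple $(T,\sigma,u)$ is least resolved.

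I do not expect a serious obstacle here; the statement is essentially an assembly of three previously established results. The one point that needs a little care is the interface between the two notions of ``least resolved'': Lemma~\ref{lem:MTT-minimal} is phrased in terms of trees agreeing with a triple pair, whereas the present statement is about qBMG-least-resolvedness in the sense of the definition preceding Lemma~\ref{lem:essential-edges}. The bridge is precisely the characterization ``inner edge essential $\iff$ distinguished by an informative triple'' from the corollary to Lemma~\ref{lem:essential-edges}, together with the fact — extracted from the proof of Lemma~\ref{lem:MTT-minimal} rather than from its statement — that \texttt{MTT} produces a tree each of whose inner edges is distinguished by a triple of $\mathscr{R}$. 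If one prefers to use only the \emph{statement} of Lemma~\ref{lem:MTT-minimal}, one can instead argue contrapositively: if $(T,\sigma,u)$ were not qBMG-least-resolved, there would be $T'<T$ and $u'$ with $\qbmg(T',\sigma,u')=(G,\sigma)$; by Lemma~\ref{lem:qBMG-inf-forb-triples} applied to $(T',\sigma,u')$, the tree $T'$ would agree with $(\mathscr{R}(G,\sigma),\mathscr{F}(G,\sigma))$, contradicting the minimality of the \texttt{MTT} tree $T$ given by Lemma~\ref{lem:MTT-minimal}. Either route closes the argument.
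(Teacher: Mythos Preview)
Your proposal is correct and matches the paper's approach: the paper simply states the proposition ``as a consequence of Theorem~\ref{thm:qBMG-via-triples} and Lemma~\ref{lem:MTT-minimal}'' without a formal proof, and your second route (using Lemma~\ref{lem:qBMG-inf-forb-triples} to show that any $T'<T$ explaining $(G,\sigma)$ would still agree with $(\mathscr{R},\mathscr{F})$, contradicting Lemma~\ref{lem:MTT-minimal}) is exactly the intended bridge. Your first route also works but relies on an intermediate claim from inside the proof of Lemma~\ref{lem:MTT-minimal}; the second route is cleaner and self-contained at the level of stated results.
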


\section{Binary-explainable qBMGs}
\label{sect:be-qBMG}

Binary Trees play a prominent role in phylogenetics because
evolutionary events are usually assumed to give rise to only two
descendant genes or species. Polytomies, i.e., vertices in $T$ with three
or more children, are, therefore, in most cases interpreted as the
consequence of insufficient phylogenetic information (soft polytomies)
rather than as true multifurcations (hard polytomies). The latter appear 
occasionally but are considered very rare by most 
authors~\cite{Gorecki:14,Sayyari:18}. It is of interest, therefore, to determine
whether a qBMG can be explained by a binary tree $T$. BMGs that can be
explained by binary trees have been studied in~\cite{Schaller:21c}.  In
this section, we derive analogous results for qBMGs.

\begin{definition}
  A vertex-colored digraph $(G,\sigma)$ is a binary-explainable qBMG if there 
  is a binary tree $T$ and a truncation map $u$ such that $(T,\sigma,u)$ 
  explains $(G,\sigma)$.
\end{definition}

\begin{theorem}
  \label{thm:be-qBMG}
  A properly-colored digraph $(G,\sigma)$ with vertex set $L$ is a
  binary-explainable qBMG if and only if $\Rbin\coloneqq \Rbin(G,\sigma)$
  is consistent.
  In this case, for every refinement $T$ of the tree $\build(\Rbin, L)$, there 
  is a truncation map $u$ such that $(T,\sigma,u)$ explains $(G,\sigma)$.
\end{theorem}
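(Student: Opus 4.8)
The plan is to deduce both directions from the triple characterization of qBMGs already in hand, Theorem~\ref{thm:qBMG-via-triples}, together with the elementary fact that a binary phylogenetic tree restricted to any three leaves is a resolved triple (never a star). No new tree construction is needed; everything is bookkeeping about which triples a tree displays.

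For the \emph{only-if} direction, I would start from a binary tree $(T,\sigma,u)$ explaining $(G,\sigma)$. Lemma~\ref{lem:qBMG-inf-forb-triples} already gives that $T$ displays every triple in $\mathscr{R}(G,\sigma)$ and none in $\mathscr{F}(G,\sigma)$, so it only remains to check that $T$ displays each extra triple $bb'|a$ with $ab|b'\in\mathscr{F}(G,\sigma)$. Since forbidden triples come in pairs, $ab'|b\in\mathscr{F}(G,\sigma)$ as well; as $a,b,b'$ are pairwise distinct and $T$ is binary, the restriction $T_{\{a,b,b'\}}$ is one of the three resolved triples on $\{a,b,b'\}$, and, being neither $ab|b'$ nor $ab'|b$, it must be $bb'|a$. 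Hence $T$ displays all of $\Rbin(G,\sigma)$, which is precisely the statement that $\Rbin(G,\sigma)$ is consistent.

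For the \emph{if} direction, assume $\Rbin\coloneqq\Rbin(G,\sigma)$ is consistent, so $T_0\coloneqq\build(\Rbin,L)$ is a tree on $L$ displaying all of $\Rbin$, hence all of $\mathscr{R}(G,\sigma)\subseteq\Rbin$. Let $T$ be any refinement of $T_0$. Refinement never destroys a displayed triple (cf.\ \cite[Theorem~6.4.1]{Semple:03}), so $T$ still displays all of $\mathscr{R}(G,\sigma)$. To see that $T$ displays no triple of $\mathscr{F}(G,\sigma)$, take $ab|b'\in\mathscr{F}(G,\sigma)$; then $bb'|a\in\Rbin$, so $T$ displays $bb'|a$, which forces $\lca_T(a,b)=\lca_T(a,b')$ and thereby precludes $T$ from displaying $ab|b'$. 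Thus $T$ agrees with $(\mathscr{R}(G,\sigma),\mathscr{F}(G,\sigma))$, and Theorem~\ref{thm:qBMG-via-triples} supplies a truncation map $u$ with $(T,\sigma,u)$ explaining $(G,\sigma)$. Specializing to a binary refinement of $T_0$, which always exists, shows that $(G,\sigma)$ is a binary-explainable qBMG, and since the argument applies to \emph{every} refinement of $\build(\Rbin,L)$, the final clause of the theorem follows.

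I do not anticipate a genuine obstacle here: this is the qBMG counterpart of the BMG result of~\cite{Schaller:21c} with the color-sink-freeness hypothesis removed, and the only step needing care is preservation under refinement in the \emph{if} direction. That is exactly why encoding each forbidden triple $ab|b'$ as the required ``swap'' triple $bb'|a$ in $\Rbin$ is the right device: a displayed required triple stays displayed under refinement, and $bb'|a$ being displayed is logically incompatible with $ab|b'$ being displayed, so no refinement of $\build(\Rbin,L)$ can reintroduce a forbidden triple.
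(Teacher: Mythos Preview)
Your proposal is correct and follows essentially the same approach as the paper's own proof: both directions rely on Theorem~\ref{thm:qBMG-via-triples} together with the observation that a binary tree resolves every three-leaf subset, so that the forbidden pair $ab|b',\,ab'|b$ forces the displayed triple to be $bb'|a$. The only cosmetic differences are that the paper cites \cite[Theorem~1]{Bryant:95} rather than \cite[Theorem~6.4.1]{Semple:03} for preservation of displayed triples under refinement, and invokes Theorem~\ref{thm:qBMG-via-triples} where you (arguably more precisely) invoke Lemma~\ref{lem:qBMG-inf-forb-triples}.
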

\begin{proof}
  For the \emph{only-if} direction, suppose $(G,\sigma)$ is a
  binary-explainable qBMG, i.e., it is explained by a binary leaf-colored
  tree with truncation map $(T,\sigma,u)$.  By
  Theorem~\ref{thm:qBMG-via-triples}, $T$ agrees with
  $(\mathscr{R}(G,\sigma), \mathscr{F}(G,\sigma))$.  In particular, $T$
  displays all triples in $\mathscr{R}(G,\sigma)\subseteq \Rbin$.  Now
  assume that there is a triple $bb'|a\in \Rbin \setminus
  \mathscr{R}(G,\sigma) = \{bb'|a\colon ab|b'\in \mathscr{F}(G,\sigma),
  \sigma(b)=\sigma(b')\}$.  By definition, $ab|b'\in \mathscr{F}(G,\sigma)$
  implies $ab'|b\in \mathscr{F}(G,\sigma)$.  Therefore, of the three
  possible triples $ab|b'$, $ab'|b$, and $bb'|a$ on $\{a,b,b'\}$, only
  $bb'|a$ may be displayed by $T$. This, together with the hypothesis
  that $T$ is binary, implies that $T$ indeed displays $bb'|a$.  Since this
  is true for any $bb'|a\in \Rbin \setminus \mathscr{R}(G,\sigma)$, $T$
  displays all triples in $\Rbin$, and thus, $\Rbin$ is consistent.
  
  For the \emph{if} direction, suppose that $\Rbin$ is consistent.  Hence,
  the tree $\build(\Rbin, L)$ exists and displays all triples in $\Rbin$.
  Consider an arbitrary refinement $T$ of $\build(\Rbin, L)$ (note that
  $T=\build(\Rbin, L)$ is possible).  By~\cite[Theorem~1]{Bryant:95}, $T$
  also displays all triples in $\Rbin$.  Since
  $\mathscr{R}(G,\sigma)\subseteq \Rbin$, $T$ displays all triples in
  $\mathscr{R}(G,\sigma)$.  Now assume there is a triple
  $ab|b'\in \mathscr{F}(G,\sigma)$ where $\sigma(b)=\sigma(b')$.  By
  definition, we have $bb'|a\in \Rbin$, and thus, $bb'|a$ is displayed by
  $T$. Therefore, $T$ does not display the triple $ab|b'$.  Since
  this is true for any $ab|b'\in \mathscr{F}(G,\sigma)$, $T$ displays none
  of the triples in $\mathscr{F}(G,\sigma)$.  In summary, $T$ agrees with
  $(\mathscr{R}(G,\sigma), \mathscr{F}(G,\sigma))$.  This yields that
  $(\mathscr{R}(G,\sigma), \mathscr{F}(G,\sigma))$ is consistent and,
  together with Theorem~\ref{thm:qBMG-via-triples}, that $(G,\sigma)$ is a
  qBMG.  In particular, $T$ can be chosen to be a binary refinement of
  $\build(\Rbin, L)$. Hence, $(G,\sigma)$ is a binary-explainable qBMG.
\end{proof}
As an immediate consequence of Theorem~\ref{thm:be-qBMG},
Observation~\ref{obs:R-restriction}, and the fact that subsets of consistent
triple sets are again consistent, we obtain that binary-explainable qBMGs
form a hereditary class of colored digraphs.
\begin{corollary}
  \label{cor:be-qBMG-hereditary}
  Every induced subgraph of a binary-explainable qBMG is again a
  binary-explainable qBMG.
\end{corollary}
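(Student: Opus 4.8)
The plan is to reduce the claim to the triple-based characterization of binary-explainable qBMGs in Theorem~\ref{thm:be-qBMG}, in exact parallel to the alternative proof of Corollary~\ref{cor:hereditary} given above. First I would fix a binary-explainable qBMG $(G,\sigma)$ and an arbitrary subset $V'\subseteq V(G)$, and abbreviate $\Rbin\coloneqq\Rbin(G,\sigma)$ and $\Rbin'\coloneqq\Rbin(G[V'],\sigma_{V'})$. By Theorem~\ref{thm:be-qBMG}, binary-explainability of $(G,\sigma)$ is equivalent to consistency of $\Rbin$, so there is a tree $T$ on $V(G)$ that displays every triple in $\Rbin$.

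The key step is to relate $\Rbin'$ to $\Rbin$. By Observation~\ref{obs:R-restriction} applied with $R=\Rbin$, we have $\Rbin'=\Rbin(G,\sigma)_{V'}=\{ab|c\in\Rbin\mid a,b,c\in V'\}\subseteq\Rbin$. Since $T$ displays every triple of $\Rbin$, it in particular displays every triple of the subset $\Rbin'$ (equivalently, $T_{V'}$ displays $\Rbin'$), so $\Rbin'$ is consistent. The induced subgraph $(G[V'],\sigma_{V'})$ is properly colored, being an induced subgraph of a properly colored digraph, so the converse direction of Theorem~\ref{thm:be-qBMG} applies and yields that $(G[V'],\sigma_{V'})$ is a binary-explainable qBMG, which is the assertion.

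There is essentially no obstacle: the only facts one must confirm are that $\Rbin$ is defined purely from the graph and its coloring, so that forming $\Rbin$ commutes with passing to induced subgraphs — this is precisely Observation~\ref{obs:R-restriction} — and that a subset of a consistent triple set is again consistent, which is immediate since any tree witnessing consistency of the larger set also witnesses it for the smaller. As a self-contained alternative that avoids Theorem~\ref{thm:be-qBMG}, one could instead start from a binary explaining tree $(T,\sigma,u)$ for $(G,\sigma)$, iterate the single-vertex deletion of Lemma~\ref{lem:delvert} to obtain a truncation map $u'$ with $\qbmg(T_{V'},\sigma_{V'},u')=(G,\sigma)[V']$, and note that the restriction $T_{V'}$ of a binary phylogenetic tree is again binary, since suppressing the degree-two vertices in the minimal connecting subtree leaves every inner vertex with exactly two children. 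Either route is short; the triple-based one is the natural counterpart to the treatment of Corollary~\ref{cor:hereditary}.
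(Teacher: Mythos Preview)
Your proposal is correct and matches the paper's own argument essentially verbatim: the paper derives Corollary~\ref{cor:be-qBMG-hereditary} as an immediate consequence of Theorem~\ref{thm:be-qBMG}, Observation~\ref{obs:R-restriction}, and the fact that subsets of consistent triple sets are again consistent. Your alternative route via Lemma~\ref{lem:delvert} and the observation that restrictions of binary phylogenetic trees are binary is also valid, though the paper does not spell it out.
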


In~\cite{Schaller:21a}, it was furthermore shown that a simple forbidden
induced subgraph, called \emph{hourglass}, is sufficient to characterize
binary-explainable BMGs among BMGs in general.
\begin{definition}
  An \emph{hourglass} in a properly vertex-colored digraph $(G,\sigma)$,
  denoted by $[xy \hourglass x'y']$, is a subgraph $(G[Q],\sigma_{Q})$
  induced by a set of four pairwise distinct vertices
  $Q=\{x, x', y, y'\}\subseteq V(G)$ such that (i)
  $\sigma(x)=\sigma(x')\ne\sigma(y)=\sigma(y')$, (ii) $xy,yx,x'y',y'x'\in 
  E(G)$, (iii) $xy',yx'\in E(G)$, and (iv)
  $y'x,x'y\notin E(G)$.\\
  A properly vertex-colored digraph is \emph{hourglass-free} if it does not 
  have an hourglass as an induced subgraph.
\end{definition}
\noindent 
The definition of hourglasses is illustrated in Figure~\ref{fig:hourglass}
(leftmost digraph).  We will use the following technical result to link
hourglasses to the inconsistency of the triple set $\Rbin(G,\sigma)$.
\begin{proposition}[{\cite[Theorem~2]{Bryant:95}}]
  \label{prop:consistency-aho-graph}
  A set of triples $\mathscr{R}$ defined on a leaf set $L$ is consistent if and 
  only if $[\mathscr{R}_{L'}, L']$ is disconnected for every subset 
  $L'\subseteq L$ with $|L'|\ge 3$.
\end{proposition}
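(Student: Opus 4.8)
The statement is the classical correctness criterion for the \texttt{BUILD} algorithm of Aho et al., and I would prove the two implications separately: the \emph{only-if} direction by restricting an explaining tree, and the \emph{if} direction by showing that the \texttt{BUILD} recursion succeeds, by induction on the size of the leaf set.

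For the \emph{only-if} direction, suppose $\mathscr{R}$ is consistent and let $T$ be a tree on $L$ that displays all of $\mathscr{R}$. Fix $L'\subseteq L$ with $|L'|\ge 3$ and pass to the restriction $T_{L'}$, whose root $\rho'=\lca_T(L')$ has $k\ge 2$ children; their subtrees partition $L'$ into blocks $B_1,\dots,B_k$. The point to check is that no edge of $[\mathscr{R}_{L'},L']$ joins two distinct blocks: if $ab|c\in\mathscr{R}_{L'}$ with $a\in B_i$ and $b\in B_j$, $i\ne j$, then $\lca_{T_{L'}}(a,b)=\rho'$, so neither $T_{L'}$ nor $T$ displays $ab|c$, contradicting $ab|c\in\mathscr{R}$. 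Hence every edge of $[\mathscr{R}_{L'},L']$ stays inside one of the $\ge 2$ nonempty blocks, so the graph is disconnected.

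For the \emph{if} direction I would define a tree $\mathrm{tree}(L')$ recursively over leaf subsets $L'$ of the ground set: if $|L'|=1$ return that leaf; if $|L'|\ge 2$ take the connected components $C_1,\dots,C_k$ of $[\mathscr{R}_{L'},L']$ — there are at least two, since for $|L'|\ge 3$ this is exactly the hypothesis and for $|L'|=2$ the Aho graph is edgeless — recurse to build $\mathrm{tree}(C_i)$ on each component, and attach these as the children of a fresh root. The recursion terminates because each $C_i$ is a proper subset, and every inner vertex ends up with $\ge 2$ children, so the output is phylogenetic. A prerequisite is that the hypothesis is hereditary along the recursion: for $L''\subseteq C_i$ one has $(\mathscr{R}_{C_i})_{L''}=\mathscr{R}_{L''}=(\mathscr{R}_{L'})_{L''}$, so $[(\mathscr{R}_{C_i})_{L''},L'']$ coincides with $[(\mathscr{R}_{L'})_{L''},L'']$ and is disconnected whenever $|L''|\ge 3$. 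Then, by induction on $|L'|$, I would verify that $\mathrm{tree}(L')$ displays every $ab|c\in\mathscr{R}_{L'}$: the edge $ab$ of $[\mathscr{R}_{L'},L']$ puts $a$ and $b$ in a common component $C_i$; if also $c\in C_i$, then $ab|c\in\mathscr{R}_{C_i}$ is displayed by $\mathrm{tree}(C_i)$ and hence by $\mathrm{tree}(L')$, while if $c\notin C_i$ then $\lca(a,b)$ lies strictly inside $\mathrm{tree}(C_i)$, below the new root $=\lca(a,c)=\lca(b,c)$, so $ab|c$ is again displayed. Taking $L'=L$ (so $\mathscr{R}_{L}=\mathscr{R}$) yields a tree displaying $\mathscr{R}$, i.e.\ $\mathscr{R}$ is consistent.

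The two displayed-triple checks are routine; the part that needs care is the bookkeeping in the recursion. In particular, one must notice that the hypothesis is needed at \emph{every} level — disconnectedness of $[\mathscr{R}_{L'},L']$ for all $L'$, not just for $L$ itself — so that the inductive hypothesis survives passage to the components $C_i$, and one must treat the small cases $|L'|\in\{1,2\}$ so that the recursion bottoms out correctly.
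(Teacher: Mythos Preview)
The paper does not prove this proposition; it is stated with a citation to \cite[Theorem~2]{Bryant:95} and used as a black box. Your argument is the standard proof of the \texttt{BUILD} correctness criterion and is correct as written: the \emph{only-if} direction via the block partition induced by the children of the root of $T_{L'}$, and the \emph{if} direction by running \texttt{BUILD} and verifying by induction that each informative triple is displayed, with the hereditary observation $(\mathscr{R}_{C_i})_{L''}=\mathscr{R}_{L''}$ ensuring the hypothesis survives the recursion. There is nothing to compare against in the present paper.
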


\begin{figure}[tb]
  \centering
  \includegraphics[width=0.8\linewidth]{./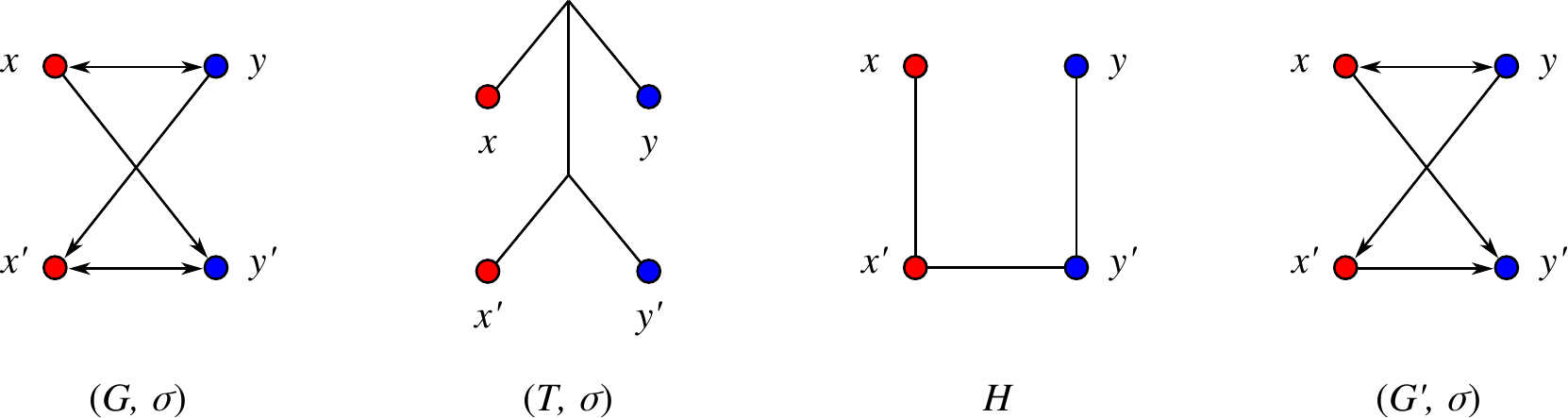}
  \caption{An hourglass $(G,\sigma)$ is itself a BMG (and thus qBMG) since
    it is explained by the tree $(T,\sigma)$.  The Aho et al.\ graph
    $H=[\Rbin(G,\sigma), V(G)]$ is connected, and thus, $(G,\sigma)$ is not
    binary-explainable, see Theorem~\ref{thm:be-qBMG} and
    Proposition~\ref{prop:consistency-aho-graph}). Since
    $H=[\Rbin(G',\sigma), V(G')]$, the qBMG $(G',\sigma)$ is also not
    binary-explainable.}
  \label{fig:hourglass}
\end{figure}

\begin{lemma}
  \label{lem:be-implies-hourglass-free}
  Every binary-explainable qBMG $(G,\sigma)$ is hourglass-free.
\end{lemma}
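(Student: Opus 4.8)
The plan is to argue by contradiction, combining the triple characterization of binary-explainable qBMGs with heredity. Suppose $(G,\sigma)$ is a binary-explainable qBMG that nevertheless contains an induced hourglass $[xy\hourglass x'y']$ on a vertex set $Q=\{x,x',y,y'\}$. By Corollary~\ref{cor:be-qBMG-hereditary}, the induced subgraph $(G[Q],\sigma_Q)$ is again a binary-explainable qBMG, so by Theorem~\ref{thm:be-qBMG} the triple set $\Rbin(G[Q],\sigma_Q)$ is consistent. I would then derive a contradiction by showing that the Aho et al.\ graph $[\Rbin(G[Q],\sigma_Q),Q]$ is connected, which, via Proposition~\ref{prop:consistency-aho-graph} applied with $L'=Q$ (note $|Q|=4\ge 3$), contradicts consistency.

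For the computation, from the definition of an hourglass one first reads off the out-neighborhoods restricted to $Q$: $N(x)\cap Q=\{y,y'\}$, $N(y)\cap Q=\{x,x'\}$, $N(x')\cap Q=\{y'\}$, and $N(y')\cap Q=\{x'\}$. Plugging these into Definition~\ref{def:informative_triples} gives $\mathscr{R}(G[Q],\sigma_Q)=\{x'y'|y,\;y'x'|x\}$ and $\mathscr{F}(G[Q],\sigma_Q)=\{xy|y',\;xy'|y,\;yx|x',\;yx'|x\}$, hence $\Rbin(G[Q],\sigma_Q)=\mathscr{R}(G[Q],\sigma_Q)\cup\{yy'|x,\;xx'|y\}$. (Equivalently, Observation~\ref{obs:R-restriction} lets one obtain these as the restrictions to $Q$ of the corresponding triple sets of $(G,\sigma)$.) The Aho et al.\ graph on $Q$ therefore has the edges $\{x',y'\}$ (from $x'y'|y$ or $y'x'|x$), $\{y,y'\}$ (from $yy'|x$), and $\{x,x'\}$ (from $xx'|y$); these form the path $x-x'-y'-y$, so the graph is connected, and the contradiction follows.

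There is no deep obstacle here; the only delicate point is the bookkeeping in passing from $\mathscr{F}(G[Q],\sigma_Q)$ to $\Rbin(G[Q],\sigma_Q)$. One must keep in mind that forbidden triples occur in pairs $ab|b',\,ab'|b$, that $\Rbin$ adjoins the reversed triple $bb'|a$ for each such pair, and that the two reversed triples arising from a pair coincide, so that $\Rbin(G[Q],\sigma_Q)$ ends up containing exactly the four triples listed above and its Aho et al.\ graph has exactly three edges forming a path rather than a disconnected graph. Once this is verified, Proposition~\ref{prop:consistency-aho-graph} yields the contradiction, so $(G,\sigma)$ must be hourglass-free. This is precisely the observation recorded in Figure~\ref{fig:hourglass} — the hourglass is itself a (q)BMG whose $\Rbin$-Aho graph is connected — propagated to all binary-explainable qBMGs by heredity.
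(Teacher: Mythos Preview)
Your proof is correct and follows essentially the same route as the paper: both compute the triples contributed by the hourglass, observe that the Aho et al.\ graph on $\{x,x',y,y'\}$ is the path $x\text{--}x'\text{--}y'\text{--}y$, and invoke Proposition~\ref{prop:consistency-aho-graph} together with Theorem~\ref{thm:be-qBMG}. The only cosmetic difference is that the paper works directly with $\Rbin(G,\sigma)$ and its restriction $\Rbin_{L'}$ (so it does not need to pass through Corollary~\ref{cor:be-qBMG-hereditary}), whereas you first restrict to the induced hourglass via heredity; as you yourself note, Observation~\ref{obs:R-restriction} makes the two viewpoints equivalent.
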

\begin{proof}
  Suppose that $(G,\sigma)$ has an hourglass $[xy \hourglass x'y']$ as a
  subgraph induced by $L'\coloneqq \{x,x',y,y'\}$ (where
  $\sigma(x)=\sigma(x')\ne\sigma(y)=\sigma(y')$).  By the definition of
  hourglasses, informative triples, and forbidden triples, we have
  $x'y'|y, y'x'|x \in \mathscr{R}(G,\sigma)$ and
  $xy|y',xy'|y,yx|x', yx'|x\in\mathscr{F}(G,\sigma)$.  This in turn yields
  $x'y'|y, y'x'|x, yy'|x, xx'|y \in \Rbin(G,\sigma)\eqqcolon\Rbin$.  Hence,
  $[\Rbin_{L'}, L']$ has (undirected) edges $x'y'$, $yy'$, and $xx'$, and
  thus, it is connected.  By Proposition~\ref{prop:consistency-aho-graph},
  this implies that $\Rbin$ is not consistent, and by
  Theorem~\ref{thm:be-qBMG}, $(G,\sigma)$ is not a binary-explainable qBMG.
\end{proof}
In contrast to BMGs, see~\cite[Proposition~8]{Schaller:21a}, the
converse of Lemma~\ref{lem:be-implies-hourglass-free} is not true for
qBMGs. Figure~\ref{fig:hourglass} shows an example of an hourglass-free
qBMG $(G',\sigma)$ that is not binary-explainable.
\begin{figure}[tb]
  \centering
  \includegraphics[width=0.85\linewidth]{./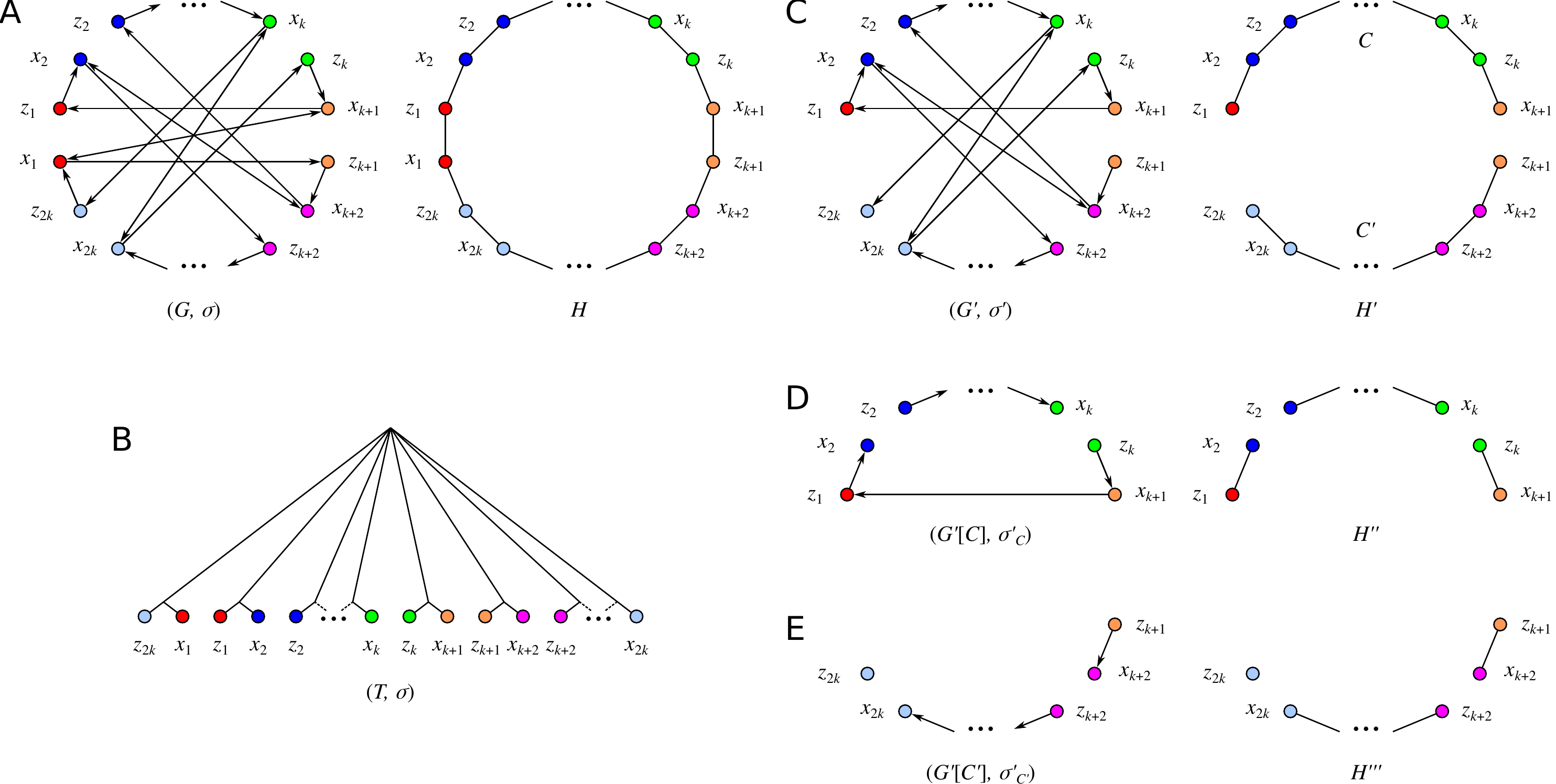}
  \caption{The vertex-colored digraph $(G,\sigma)$ is a qBMG since it is
    associated with the BMG explained by $(T,\sigma)$. However, it is not
    binary-explainable since $H=[\Rbin(G, \sigma), V(G)]$ is connected and
    thus $\Rbin(G, \sigma)$ is inconsistent, see
    Theorem~\ref{thm:be-qBMG} and Proposition~\ref{prop:consistency-aho-graph}. 
    The graph $(G',\sigma')$ obtained from $(G,\sigma)$ by deleting vertex 
    $x_1$ and its incident edges is a binary-explainable qBMG. To see this, 
    note that $H'=[\Rbin(G', \sigma'), V(G')]$ is disconnected. Hence, 
    \texttt{BUILD} with input $\Rbin(G', \sigma')$ and $V(G')$ recurses on the 
    connected components $C$ and $C'$. In these recursion steps, the Aho 
    et al.\ graphs $H''$ and $H'''$, respectively, are disconnected and their
    connected components have at most two vertices. Hence, $\Rbin(G',
    \sigma')$ is consistent.}
  \label{fig:large-non-be-qBMG}
\end{figure}
In general, no finite set of forbidden induced subgraphs characterizes
binary-explainable qBMGs among qBMGs. Figure~\ref{fig:large-non-be-qBMG}A, 
arising from a geometric configuration, provides a counterexample. For an 
integer $k\ge 2$, consider a regular $4k$-gon $x_1z_1\cdots x_{2k}z_{2k}$ in 
the Euclidean plane. Its vertices are colored with $2k$ colors such that $x_i$ 
and $z_i$ receive the same color for all $1\leq i\leq 2k$. Then, we insert
directed edges $x_{1}z_{k+1}$, $x_{k+1}z_1$, $z_1x_2$ and the symmetric
edge $x_1x_{k+1}$. Corresponding arcs are inserted for each of the $2k-1$
rotation of the polygon by an angle of $\pi i/k$ for $1 \leq i\leq k-1$
around its center; see Figure~\ref{fig:large-non-be-qBMG}A. A formal
presentation of this graph is given in the following lemma, in which we use 
brackets to denote indices that are taken modulo $2k$.

\begin{lemma}
  \label{lem:be-qBMG-not-finite}
  For an integer $k \geq 2,$ let $(G,\sigma)$ be a vertex-colored
  digraph with vertex set $L$ and edge set $E$ where
  \begin{description}[noitemsep,nolistsep]
    \item[(i)] $L=\{x_1,z_1, x_2, z_2, \dots, x_{2k},
    z_{2k}\}$,
    \item[(ii)] $\sigma(L)\coloneqq \{c_1,c_2,\dots,c_{2k}\}$
    comprises $2k$ pairwise distinct colors such that
    $\sigma(x_i)=\sigma(z_i)=c_i$ for all $1\le i\le 2k$, and
    \item[(iii)] $E= \bigcup_{i=1}^{2k} \{ x_{i} x_{\MOD{i+k}},\; x_{i}
    z_{\MOD{i+k}},\; z_{i} x_{\MOD{i+1}}\}$
    where $[n]\coloneqq n\ \mathrm{mod}\ 2k$.
  \end{description}
  Then $(G,\sigma)$ is a qBMG that is not binary-explainable.  Moreover,
  every induced subgraph of $(G,\sigma)$ with at most $|L|-2$ vertices is a
  binary-explainable qBMG.
\end{lemma}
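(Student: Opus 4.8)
The plan is to reduce the three claims to the triple characterizations of qBMGs and binary-explainable qBMGs (Theorems~\ref{thm:qBMG-via-triples} and~\ref{thm:be-qBMG}), the restriction identity for triple sets (Observation~\ref{obs:R-restriction}), and the Aho-graph criterion for consistency (Proposition~\ref{prop:consistency-aho-graph}). The first step is to read off the triple sets from the edge set in~(iii). A one-line inspection of out-neighbourhoods gives $N_G(x_i)=\{x_{\MOD{i+k}},z_{\MOD{i+k}}\}$, a complete colour class, and $N_G(z_i)=\{x_{\MOD{i+1}}\}$; hence $\mathscr{R}(G,\sigma)=\{z_ix_{\MOD{i+1}}|z_{\MOD{i+1}}\colon 1\le i\le2k\}$, $\mathscr{F}(G,\sigma)=\{x_ix_{\MOD{i+k}}|z_{\MOD{i+k}},\,x_iz_{\MOD{i+k}}|x_{\MOD{i+k}}\colon 1\le i\le2k\}$, and therefore, after re-indexing the contributions of $\mathscr{F}$ via $j=\MOD{i+k}$, $\Rbin\coloneqq\Rbin(G,\sigma)=\{z_ix_{\MOD{i+1}}|z_{\MOD{i+1}}\colon i\}\cup\{x_iz_i|x_{\MOD{i+k}}\colon i\}$. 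One also records that $(G,\sigma)$ is properly coloured, which only uses $i\not\equiv i+k$ and $i\not\equiv i+1\pmod{2k}$.

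To see that $(G,\sigma)$ is a qBMG I would exhibit an explaining tree directly: let $T$ be the ``spider'' with root $\rho$ whose $2k$ children $p_1,\dots,p_{2k}$ each carry exactly two leaves, with $p_m$ adjacent to $z_{\MOD{m-1}}$ and $x_m$. Computing lowest common ancestors in $T$ gives $\lca_T(z_i,x_{\MOD{i+1}})=p_{\MOD{i+1}}\prec_T\rho=\lca_T(z_i,z_{\MOD{i+1}})$ and $\lca_T(x_i,x_{\MOD{i+k}})=\lca_T(x_i,z_{\MOD{i+k}})=\rho$, where $k\ge2$ guarantees that $i$, $\MOD{i+k}$ and $\MOD{i+k+1}$ are pairwise distinct; hence $T$ displays every triple of $\mathscr{R}(G,\sigma)$ and none of $\mathscr{F}(G,\sigma)$. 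Thus $(\mathscr{R}(G,\sigma),\mathscr{F}(G,\sigma))$ is consistent and Theorem~\ref{thm:qBMG-via-triples} gives that $(G,\sigma)$ is a qBMG; equivalently, one checks that $\bmg(T,\sigma)$ is the associated BMG of Figure~\ref{fig:large-non-be-qBMG}. To show that $(G,\sigma)$ is not binary-explainable I would form the Aho et al.\ graph $\Gamma\coloneqq[\Rbin,L]$; from the explicit form of $\Rbin$ its edges are precisely $\{x_i,z_i\}$ and $\{z_i,x_{\MOD{i+1}}\}$ for $1\le i\le2k$, so $\Gamma$ is exactly the $4k$-cycle $x_1-z_1-x_2-z_2-\dots-x_{2k}-z_{2k}-x_1$, which is connected. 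By Proposition~\ref{prop:consistency-aho-graph} (with $L'=L$) the set $\Rbin$ is inconsistent, and Theorem~\ref{thm:be-qBMG} then shows that $(G,\sigma)$ is not a binary-explainable qBMG.

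For the last assertion, fix $L'\subseteq L$ with $|L'|\le 4k-2$. By Observation~\ref{obs:R-restriction} and Theorem~\ref{thm:be-qBMG} it suffices to prove that $\Rbin_{L'}$ is consistent, and by Proposition~\ref{prop:consistency-aho-graph} this reduces to the claim: for every $M\subseteq L$ with $3\le|M|\le 4k-2$, the Aho graph $[\Rbin_M,M]$ is disconnected. Since every edge of $[\Rbin_M,M]$ is also an edge of $\Gamma$, this graph is a spanning subgraph of $\Gamma[M]$; if $M$ is not an arc of $\Gamma$ we are done, and if $M$ is an arc of $s\coloneqq|M|$ consecutive cycle vertices it suffices to find one edge of the path $\Gamma[M]$ that is absent from $[\Rbin_M,M]$, because deleting an edge disconnects a path. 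The key observation is that an edge $\{x_i,z_i\}$ of $\Gamma[M]$ lies in $[\Rbin_M,M]$ exactly when its ``witness'' leaf $x_{\MOD{i+k}}$, the antipode of $x_i$ on the $4k$-cycle, also lies in $M$. If $s\le 2k$, then $M$ is too short to contain a cycle vertex together with its antipode, so every edge $\{x_i,z_i\}$ of $\Gamma[M]$ is missing (and $\Gamma[M]$ contains such an edge, since the edges alternate around the cycle). If $2k<s\le 4k-2$, then $O\coloneqq L\setminus M$ is an arc of length $4k-s$ with $2\le 4k-s\le 2k-1$, so $O$ is disjoint from its antipodal arc $\phi(O)\subseteq M$; locating an entire edge $\{x_j,z_j\}$ inside $O$ then yields the missing edge $\{x_{\MOD{j+k}},z_{\MOD{j+k}}\}\subseteq\phi(O)\subseteq M$, and in the one boundary configuration where $|O|=2$ and $O=\{z_j,x_{\MOD{j+1}}\}$, using $k\ge2$ one checks instead that $\{x_{\MOD{j+1+k}},z_{\MOD{j+1+k}}\}$ is an edge of $\Gamma[M]$ with witness $x_{\MOD{j+1}}\in O$. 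Hence $[\Rbin_M,M]$ is always disconnected, and the proof is complete.

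The step I expect to be the main obstacle is this last one: verifying uniformly in $k\ge2$ and over all arc lengths $3\le s\le 4k-2$ that some edge $\{x_i,z_i\}$ still lies in $\Gamma[M]$ while its antipodal witness $x_{\MOD{i+k}}$ has been deleted. This is a small modular-arithmetic and parity case analysis (on $s\le 2k$ versus $s>2k$, and then on whether the deleted arc $L\setminus M$ has length at least $3$ or exactly $2$), and it is exactly where the bound $4k-2$ is sharp: for $|M|=4k-1$ the deleted arc shrinks to a single vertex and the construction breaks down, consistent with part~(ii), where the full graph is already not binary-explainable. Everything else -- computing $\mathscr{R}$, $\mathscr{F}$ and $\Rbin$, checking that the spider tree agrees with $(\mathscr{R},\mathscr{F})$, and recognising $[\Rbin,L]$ as a single $4k$-cycle -- is routine bookkeeping.
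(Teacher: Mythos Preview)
Your argument is correct. The computation of $\mathscr{R}$, $\mathscr{F}$ and $\Rbin$, the explicit spider tree, and the identification of $[\Rbin,L]$ as a single $4k$-cycle coincide with the paper's proof essentially verbatim.

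Where you diverge is in the final claim. The paper proceeds in two stages: it first shows, by tracing one top-level step of \texttt{BUILD}, that deleting any single vertex $x_i$ makes $\Rbin$ consistent; it then handles the deletion of two vertices by reducing, via Corollary~\ref{cor:be-qBMG-hereditary}, to this one-vertex case (splitting into the subcases ``some $x_i$ is deleted'' versus ``two $z$-vertices are deleted''), and finally invokes heredity once more for smaller subsets. Your route is more uniform: you apply Proposition~\ref{prop:consistency-aho-graph} directly and verify in one sweep that $[\Rbin_M,M]$ is disconnected for every $M$ with $3\le|M|\le 4k-2$, exploiting only that $[\Rbin_M,M]$ is a spanning subgraph of the induced subgraph $\Gamma[M]$ of the $4k$-cycle, and that the witness of each edge $\{x_i,z_i\}$ sits at the antipode $x_{\MOD{i+k}}$. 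Your arc-length case split ($s\le 2k$ versus $s>2k$, the latter refined by whether the complementary arc contains a full $\{x_j,z_j\}$ pair) is clean and covers all configurations; the boundary case $O=\{z_j,x_{\MOD{j+1}}\}$ is exactly where $k\ge2$ is needed, as you note. This buys you a shorter, self-contained argument that never invokes the heredity corollary or the inner workings of \texttt{BUILD}. The paper's approach, by contrast, yields the slightly finer intermediate statement that already $(G,\sigma)[L\setminus\{x_i\}]$ is binary-explainable for every $i$, and illustrates how \texttt{MTT}/\texttt{BUILD} behave on the example.
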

\begin{proof}
  By construction, for all $i\in \{1,\dots, 2k\}$, we have
  $z_{i} x_{\MOD{i+1}}\in E$ but $z_{i} z_{\MOD{i+1}}\not\in E$.  This
  and $\sigma(z_i)\neq \sigma(x_{\MOD{i+1}}) =\sigma(z_{\MOD{i+1}})$
  implies $z_{i}x_{\MOD{i+1}}|z_{\MOD{i+1}}\in \mathscr{R}(G,\sigma)$ for
  all $i\in \{1,\dots, 2k\}$. Therefore,
  \begin{equation*}
    \mathscr{R}\coloneqq 
    \{z_{i}x_{\MOD{i+1}}|z_{\MOD{i+1}} \colon 1\le i \le 
    2k\}\subseteq\mathscr{R}(G,\sigma).
  \end{equation*}
  Assume, for contradiction, that
  $\mathscr{R}(G,\sigma)\setminus \mathscr{R}\neq \emptyset$, i.e.,
  $\mathscr{R}(G,\sigma)$ contains further informative triples $ab|b'$ in
  which case $ab\in E(G)$, $ab'\notin E(G)$ and
  $\sigma(a)\neq \sigma(b)=\sigma(b')$.  By construction, $ab\in E(G)$
  implies that only $a = x_{i}$ and either $b=x_{\MOD{i+k}}$ or
  $b= z_{\MOD{i+k}}$ is possible for some $i\in \{1,\dots, 2k\}$.  In
  either case, $\sigma(b)=\sigma(b')$ implies that
  $b'\in \{x_{\MOD{i+k}},z_{\MOD{i+k}}\}\setminus \{b\}$ and thus
  $ab'\in E(G)$; a contradiction.  Hence, there are no other informative
  triples, i.e., $\mathscr{R}=\mathscr{R}(G,\sigma)$.  Similar arguments
  imply
  \begin{equation*}
    \mathscr{F}\coloneqq \mathscr{F}(G,\sigma)=\bigcup_{i=1}^{2k}\; 
    \{x_{i} x_{\MOD{i+k}}|z_{\MOD{i+k}},\; x_{i} 
    z_{\MOD{i+k}}|x_{\MOD{i+k}}\}
  \end{equation*}
  and thus
  \begin{equation*}
    \Rbin\coloneqq \Rbin(G,\sigma)=\bigcup_{i=1}^{2k}\; 
    \{z_{i}x_{\MOD{i+1}}|z_{\MOD{i+1}},\;
    x_{\MOD{i+k}}z_{\MOD{i+k}} | x_{i}\}.
  \end{equation*}
  Using the tree $(T,\sigma)$ in Figure~\ref{fig:large-non-be-qBMG}B, one
  easily verifies that this tree displays all triples in $\mathscr{R}$ and
  none of the triples in $\mathscr{F}$.  Therefore,
  $(\mathscr{R},\mathscr{F})$ is consistent, and by
  Theorem~\ref{thm:qBMG-via-triples}, $(G,\sigma)$ is a qBMG.  However, the
  graph $H\coloneqq[\Rbin, L]$ is connected (see also
  Figure~\ref{fig:large-non-be-qBMG}A), and thus
  Proposition~\ref{prop:consistency-aho-graph} implies that $\Rbin$ is not
  consistent. By Theorem~\ref{thm:be-qBMG}, this implies that $(G,\sigma)$
  is not binary-explainable.
  
  We next show that $(G,\sigma)[L\setminus\{x_i\}]$ is a binary-explainable
  qBMG for every $1\le i\le 2k$.  By the symmetric roles of $x_i$ in
  $(G,\sigma)$, it suffices to show this claim for $x_1$.  Thus consider
  $L'=L\setminus\{x_1\}$ and $(G',\sigma')\coloneqq (G,\sigma)[L']$. First
  note that the induced subgraph $(G',\sigma')$ is again a qBMG by 
  Corollary~\ref{cor:hereditary}.  By Observation~\ref{obs:R-restriction}, we 
  have
  \begin{equation*}
    \Rbin(G',\sigma')=\Rbin(G,\sigma)_{L'}
    =\left(\bigcup_{i=1}^{2k}\; 
    \{z_{i}x_{\MOD{i+1}}|z_{\MOD{i+1}},\;
    x_{\MOD{i+k}}z_{\MOD{i+k}} | x_{i}\}\right)
    \setminus \{z_{2k} x_1 | z_1, \; x_{k+1} z_{k+1} | x_1\}.
  \end{equation*}
  Since no second triple of the form $x_{k+1} z_{k+1} | y$ exists, the
  edge $x_{k+1} z_{k+1}$ is not present in the Aho et al.\ graph
  $H'=[\Rbin(G',\sigma'), L']$.  In particular, $H'$ is disconnected (see
  Figure~\ref{fig:large-non-be-qBMG}C) and has two connected components
  $C=\{z_1, x_2, z_2\dots, x_k, z_k, x_{k+1}\}$ and
  $C'=\{z_{k+1}, x_{k+2}, z_{k+2}, x_{2k}, z_{2k}\}$.  One easily
  verifies that there is no triple of the form
  $x_{\MOD{i+k}}z_{\MOD{i+k}} | x_{i}$ in $\Rbin(G',\sigma')$
  that is contained in $\Rbin(G',\sigma')_{C}$ or $\Rbin(G',\sigma')_{C'}$.
  In particular, this implies that $\Rbin(G',\sigma')_{C}$ and
  $\Rbin(G',\sigma')_{C'}$ contain only informative triples of
  $(G',\sigma')$.  Since $(G',\sigma')$ is a qBMG,
  $\mathscr{R}(G',\sigma')$ is consistent by
  Theorem~\ref{thm:qBMG-via-triples}. The latter arguments together imply
  that \texttt{BUILD} with input $\Rbin(G',\sigma')$ and $L'$ never
  encounters a connected Aho et al.\ graph with more than two vertices,
  i.e., $\Rbin(G',\sigma')$ is consistent.  By Theorem~\ref{thm:be-qBMG},
  this implies that $(G',\sigma')$ is binary-explainable.
  
  Now consider an induced subgraph
  $(G'',\sigma'')\coloneqq (G,\sigma)[L'']$ where
  $L''=L\setminus\{y,\tilde{y}\}$ for two distinct vertices
  $y,\tilde{y}\in L$.  If there is some $1\le i\le 2k$ such that $y=x_i$ or
  $\tilde{y}=x_i$, then $(G'',\sigma'')$ is the induced subgraph of a
  binary-explainable qBMG $(G,\sigma)[L\setminus \{x_i\}]$ and thus also a
  binary-explainable qBMG by Corollary~\ref{cor:be-qBMG-hereditary}.  Now
  suppose that this is not the case, i.e., $y=z_i$ or $\tilde{y}=z_j$ and
  $1\le i < j \le 2k$.  By construction of the Aho et al.\ graph
  and since $\Rbin(G'',\sigma'')=\Rbin_{L''}$ by
  Observation~\ref{obs:R-restriction}, $[\Rbin(G'',\sigma''), L'']$ is a 
  subgraph
  of $H[L'']$.  Using Figure~\ref{fig:large-non-be-qBMG}A, we therefore
  observe that $H[L'']$ and thus also $[\Rbin(G'',\sigma''), L'']$ has at
  least two connected components.  In particular, \texttt{BUILD} with input
  $\Rbin(G'',\sigma'')$ and $L''$ recurses on these connected components, and 
  none of them can contain both $x_i$ and $x_j$. Let $C$ be such a
  connected component and suppose that $x_i\notin C$.  Recall
  that \texttt{BUILD} recurses on $\Rbin(G'',\sigma'')_{C}$ and $C$.  By
  Observation~\ref{obs:R-restriction}, we have
  $\Rbin(G'',\sigma'')_{C}=\Rbin(G''[C],\sigma''_C)$.  Since
  $(G''[C],\sigma''_C)$ is an induced subgraph of a binary-explainable qBMG
  $(G,\sigma)[L\setminus \{x_i\}]$, it is also a binary-explainable qBMG by 
  Corollary~\ref{cor:be-qBMG-hereditary}.  It follows that
  $\Rbin(G'',\sigma'')_{C}$ must be consistent.  Since this is true for all
  connected components in the top-level recursion step, \texttt{BUILD}
  never encounters a connected Aho et al.\ graph with more than one vertex,
  and thus, $\Rbin(G'',\sigma'')$ is consistent. By
  Theorem~\ref{thm:be-qBMG}, this implies that $(G'',\sigma'')$ is
  binary-explainable.
  
  Finally an induced subgraph $(G^*,\sigma^*)$ of $(G,\sigma)$ with at most 
  $|L|-2$ 
  vertices is in particular, also an induced subgraph of a graph
  $(G,\sigma)[L'']$ with $|L''|=|L|-2$.
  By the arguments above, $(G,\sigma)[L'']$ is a binary-explainable qBMG.
  By Corollary~\ref{cor:be-qBMG-hereditary}, $(G^*,\sigma^*)$ is also a 
  binary-explainable qBMG.
\end{proof}
Since $k\ge2$ in Lemma~\ref{lem:be-qBMG-not-finite} can be chosen
arbitrarily large, we conclude
\begin{corollary}
  \label{cor:be-qBMG-not-finite}
  There is no finite set of forbidden induced vertex-colored subgraphs that 
  characterize the subclass of binary-explainable qBMGs among the class of 
  qBMGs.
\end{corollary}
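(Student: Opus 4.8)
The plan is a short compactness-style argument by contradiction, with all the substance outsourced to Lemma~\ref{lem:be-qBMG-not-finite}. Suppose, towards a contradiction, that there is a finite set $\mathcal{Z}$ of vertex-colored digraphs such that a qBMG is binary-explainable if and only if it contains no member of $\mathcal{Z}$ as an induced subgraph. Since qBMGs form a hereditary class by Corollary~\ref{cor:hereditary}, no digraph that fails to be a qBMG can ever appear as an induced subgraph of a qBMG; we may therefore assume that every $Z\in\mathcal{Z}$ is itself a qBMG (any other member of $\mathcal{Z}$ is vacuous). Put $N\coloneqq\max_{Z\in\mathcal{Z}}|V(Z)|$, which is finite because $\mathcal{Z}$ is finite, and choose an integer $k\ge 2$ large enough that $4k-2\ge N$.

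Next I would feed this $k$ into Lemma~\ref{lem:be-qBMG-not-finite} and use both of its conclusions. Let $(G,\sigma)$ be the $4k$-vertex digraph constructed there, with vertex set $L$, $|L|=4k$. By the first part of the lemma, $(G,\sigma)$ is a qBMG that is not binary-explainable, so the assumed characterization forces $(G,\sigma)$ to contain some $Z\in\mathcal{Z}$ as an induced subgraph, say $Z\cong(G,\sigma)[W]$ for a subset $W\subseteq L$ with $|W|=|V(Z)|\le N\le 4k-2=|L|-2$. By the last part of the lemma, every induced subgraph of $(G,\sigma)$ on at most $|L|-2$ vertices is a binary-explainable qBMG; applying this to $(G,\sigma)[W]$ shows that $Z$ is a binary-explainable qBMG.

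Finally I would close the loop: $Z$ is a qBMG, so the assumed characterization applies to $Z$ as well; since $Z\in\mathcal{Z}$ is (trivially) an induced subgraph of $Z$ itself, the characterization declares $Z$ \emph{not} binary-explainable, contradicting the previous paragraph. Hence no such finite $\mathcal{Z}$ exists. I do not expect any real obstacle here; the delicate construction has already been carried out in Lemma~\ref{lem:be-qBMG-not-finite}, and the only care needed is quantifier bookkeeping — finiteness of $N$, the choice of $k$ with $4k-2\ge N$, and reading ``induced subgraph'' of vertex-colored digraphs up to the natural isomorphism so that the two invocations of the lemma speak about the same object $Z$.
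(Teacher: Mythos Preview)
Your proof is correct and follows essentially the same approach as the paper, which simply states that the corollary follows from Lemma~\ref{lem:be-qBMG-not-finite} because $k\ge 2$ can be chosen arbitrarily large. Your version just makes explicit the standard compactness bookkeeping (bounding $N$, choosing $k$, and observing that any forbidden subgraph $Z$ would itself be binary-explainable yet forbidden) that the paper leaves to the reader.
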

In particular, there are minimal forbidden induced subgraphs with an 
arbitrarily large number of colors.

\section{Two-Colored qBMGs}
\label{sect:2qBMG}

The restrictions of qBMGs to the vertices with two colors in a sense form
``building blocks'' for the more general case. These bipartite graphs
deserve a more detailed investigation. With only two colors, say
$\sigma(L)=\{r,s\}$, we have $N(x)=N(x,s)$ if $\sigma(x)=r$ and
$N(x)=N(x,r)$ if $\sigma(x)=s$. We can, therefore, largely omit explicit
references to the vertex colors $r$ and $s$ and focus entirely on the sets
$N(x)$ and $N^-(x)$. In particular, properly 2-colored digraphs are
color-sink-free precisely if they are sink-free.  To avoid the explicit
treatment of trivial cases, we will consider the monochromatic edge-less
graph, and in particular singleton graphs $K_1$, also as 2-qBMGs.

In~\cite{Schaller:21d} 2-BMGs are characterized as sink-free graphs
satisfying (N1), (N2), and (N3), and in Theorem~\ref{thm:NewChar} we have
identified BMGs as the sink-free qBMGs. This suggests but does not imply,
that 2-qBMGs are characterized by the neighborhood conditions (N1), (N2),
and (N3). In the following, we show that this conjecture is indeed true.

\begin{lemma}\label{lem:n1n2n3_quasi}
  Every 2-qBMG $(G,\sigma)$ satisfies \AX{(N1)}, \AX{(N2)}, and \AX{(N3)}.
\end{lemma}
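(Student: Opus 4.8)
The plan is to verify each of the three neighborhood conditions directly from the definition of a 2-qBMG, exploiting the fact that a 2-qBMG is obtained from a 2-BMG by deleting, for certain vertices $x$, the entire out-neighborhood $N(x)$ (recall that in the two-colored case $N(x,s)$ is all of $N(x)$), as established in Lemma~\ref{2out_neighbour} and Theorem~\ref{thm:removeNs}. So fix a 2-qBMG $(G,\sigma)$ with associated 2-BMG $(\tilde G,\sigma)$, which by Proposition~\ref{prop:char2-BMG} and Lemma~\ref{lem:N1N2N3-old-eq-new} satisfies \AX{(N1)}, \AX{(N2)}, \AX{(N3)}. For every vertex $v$, either $N_G(v)=N_{\tilde G}(v)$ (call such $v$ \emph{full}) or $N_G(v)=\emptyset$ (call such $v$ \emph{empty}).

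First I would dispatch \AX{(N3)}: if $N_G(x)\cap N_G(y)\ne\emptyset$, then both $x$ and $y$ must be full (an empty vertex has empty out-neighborhood), so $N_G(x)=N_{\tilde G}(x)$ and $N_G(y)=N_{\tilde G}(y)$, and \AX{(N3)} for $(\tilde G,\sigma)$ gives one of the two inclusions, which transfers verbatim. Next, \AX{(N2)}: suppose $x_1y_1,\,y_1x_2,\,x_2y_2\in E(G)$. Since $x_1,y_1,x_2$ each have a nonempty out-edge, all three are full; hence these three edges are also edges of $\tilde G$, and \AX{(N2)} for $\tilde G$ yields $x_1y_2\in E(\tilde G)$. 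Because $x_1$ is full, $N_G(x_1)=N_{\tilde G}(x_1)\ni y_2$, so $x_1y_2\in E(G)$. The same ``promotion to full'' trick handles \AX{(N1)}: if $x,y$ are independent in $G$ and there were $w,t$ with $xt,yw,tw\in E(G)$, then $x,y,t$ are all full, so $xt,yw,tw$ are edges of $\tilde G$; moreover $x$ and $y$ are still independent in $\tilde G$ — here one must check that independence is preserved, i.e.\ that no edge between $x$ and $y$ was deleted. But an edge $xy\in E(\tilde G)\setminus E(G)$ would force $x$ to be empty in $G$, contradicting that $x$ is full (it has the out-edge $xt$); symmetrically for $yx$. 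Hence $x,y$ are independent in $\tilde G$, contradicting \AX{(N1)} for $\tilde G$.

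The one subtlety to watch for — and the place where the argument could go wrong if stated carelessly — is the case $x=y$ in \AX{(N1)}: we must rule out vertices $w,t$ with $xt,xw,tw\in E(G)$ for a single vertex $x$. This follows exactly as above (now $x=y$ is trivially full since $xt\in E(G)$, hence $xt,xw,tw\in E(\tilde G)$, contradicting \AX{(N1)} for $\tilde G$ with $x=y$), using that \AX{(N1)} in Proposition~\ref{prop:char2-BMG} is stated for all pairs including $x=y$. So no separate bipartiteness argument is needed here; $(G,\sigma)$ inherits bipartiteness as a subgraph of the bipartite graph $(\tilde G,\sigma)$. I do not expect a serious obstacle: the whole proof is a routine ``pull back the edges to the associated BMG, apply the known axioms, push the conclusion forward'' argument, and the only real bookkeeping is the observation that any vertex incident to an out-edge in $G$ is full, together with the remark that deletions only remove entire out-neighborhoods and hence cannot destroy independence of two vertices both of which retain some out-edge.
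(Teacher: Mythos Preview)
Your proof is correct and follows essentially the same route as the paper: both start from an associated 2-BMG $(\tilde G,\sigma)$ (using Lemma~\ref{2out_neighbour}/Theorem~\ref{thm:removeNs}) and transfer each of \AX{(N1)}, \AX{(N2)}, \AX{(N3)} from $\tilde G$ back to $G$ via the dichotomy $N_G(v)\in\{N_{\tilde G}(v),\emptyset\}$. Your ``full/empty'' terminology packages the key observation---that any vertex with an out-edge in $G$ has its full $\tilde G$-out-neighborhood---a bit more crisply than the paper's case analysis (which, for \AX{(N1)}, splits instead on whether $x,y$ are already independent in $\tilde G$), but the logical content is the same.
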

\begin{proof}
  Let $(\tilde{G},\sigma)$ be a 2-BMG associated with the qBMG $(G,\sigma)$
  and $L$ the common vertex set of $(G,\sigma)$ and
  $(\tilde{G},\sigma)$. By Proposition~\ref{prop:char2-BMG} and
  Lemma~\ref{lem:N1N2N3-old-eq-new}, $(\tilde{G},\sigma)$ satisfies
  \AX{(N1)}, \AX{(N2)}, and \AX{(N3)}.
  
  In order to show that $(G,\sigma)$ satisfies \AX{(N1)}, we first consider
  a pair of independent vertices $x$ and $y$ in $\tilde{G}$.  Since
  $G\subseteq \tilde G$, $x$ and $y$ are also independent in $G$. Using
  $E(G)\subseteq E(\tilde{G})$, the conclusion of property \AX{(N1)}
  remains true for $G$ whenever $x$ and $y$ are independent in $\tilde G$.
  Now consider two vertices $x$ and $y$ independent in $G$ but not
  in $\tilde G$ and assume that $xy\in E(\tilde{G})$. Thus,
  $N_{G}(x)\neq N_{\tilde G}(x)$ and Lemma~\ref{2out_neighbour} implies
  that $N_{G}(x) = \emptyset$.  Hence, $(G,\sigma)$ trivially satisfies
  \AX{(N1)} since there cannot be a vertex $t$ with $xt\in E(G)$.
  
  To see that $(G,\sigma)$ satisfies \AX{(N2)} assume that
  $x_1 y_1, y_1 x_2, x_2 y_2 \in E(G)$.  Recall that
  $E(G)\subseteq E(\tilde G)$ and $(\tilde G,\sigma)$ satisfies \AX{(N2)}
  and thus, $x_1 y_2 \notin E(\tilde G)$.  Hence, if $x_1 y_2 \notin E(G)$,
  we have $N_{G}(x)\neq N_{\tilde G}(x)$ and Lemma~\ref{2out_neighbour}
  implies that $N_{G}(x) = \emptyset$, contradicting the assumption
  $x_1 y_1 \in E(G)$. Thus we have $x_1 y_2 \in E(G)$, and hence
  $(G,\sigma)$ satisfies \AX{(N2)}.
  
  It remains to show that $(G,\sigma)$ satisfies \AX{(N3)}.  Consider
  $x,y\in L$ with a common out-neighbor $z$ in $G$.  By
  Lemma~\ref{2out_neighbour}, this implies that $N_{G}(x)=N_{\tilde G}(x)$
  and $N_{G}(y)=N_{\tilde G}(y)$. In particular, $z$ is also a common
  out-neighbor of $x$ and $y$ in $\tilde G$ since $G\subseteq \tilde G$.
  Now assume, for contradiction, that neither $N_{G}(x)\subseteq N_{G}(y)$
  nor $N_{G}(y)\subseteq N_{G}(x)$.  Hence, there are vertices $v$ such
  that $xv\in E(G)$ and $yv\notin E(G)$, and $w$ such that $yw\in E(G)$ and
  $xw\notin E(G)$.  Since $E(G)\subseteq E(\tilde{G})$, we have
  $v\in N_{\tilde G}(x)$ and $w\in N_{\tilde G}(y)$.  Since
  $(\tilde{G},\sigma)$ satisfies \AX{(N3)} and $x$ and $y$ have a common
  out-neighbor $z$ in $\tilde{G}$, it follows that
  $N_{\tilde G}(x) \subseteq N_{\tilde G}(y)$ or
  $N_{\tilde G}(y) \subseteq N_{\tilde G}(x)$.  If
  $N_{\tilde G}(x) \subseteq N_{\tilde G}(y)$, we have
  $v\in N_{\tilde G}(x) \subseteq N_{\tilde G}(y) = N_{G}(y)$, i.e.,
  $yv\in E(G)$; a contradiction.  If
  $N_{\tilde G}(y) \subseteq N_{\tilde G}(x)$, we similarly obtain the
  contradiction that $xw\in E(G)$.  Therefore, we conclude that
  $(G,\sigma)$ satisfies \AX{(N3)}.
\end{proof}

The hierarchy-like structure of the out-neighborhood, i.e., property
\AX{(N3)}, suggests that the out-neighborhoods contain information on the
structure on the tree(s) explaining a 2-colored qBMG.  This connection,
however, is less straightforward than one might expect.  
Following~\cite{Geiss:19a}, we consider the \emph{reachable sets}
\begin{equation}
  R(x)\coloneqq N(x) \cup N(N(x)) \cup N(N(N(x))) \cup \cdots
\end{equation}
and the corresponding isotonic map $R:2^V\to 2^V$,
$A\mapsto R(A)\coloneqq \bigcup_{x\in A} R(x)$. As a direct consequence of
\AX{(N2)}, we have $R(x) = N(x)\cup N(N(x))$.  Furthermore, we observe
$N(R(A))=N(N(A))\cup N(N(N(A))) \subseteq N(N(A))\cup N(A)=R(A)$ and thus
also $N(N(R(A)))\subseteq N(R(A))\subseteq R(A)$, which implies
$R(R(A))\subseteq R(A)$.

\begin{lemma}
  \label{lem:N-empty-NN-empty}
  Let $(G,\sigma)$ be a properly 2-colored graph satisfying \AX{(N1)} and
  \AX{(N2)}. If $N(x)\cap N(y)=\emptyset$, then
  $N(N(x))\cap N(N(y))=\emptyset$.
\end{lemma}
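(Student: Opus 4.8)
The plan is a proof by contradiction. Suppose there is a vertex $z\in N(N(x))\cap N(N(y))$. First I would dispose of the case $\sigma(x)\neq\sigma(y)$: since $(G,\sigma)$ is properly $2$-colored, every vertex of $N(N(v))$ has color $\sigma(v)$, so $N(N(x))$ and $N(N(y))$ lie in disjoint color classes and the conclusion holds vacuously. Hence I may assume $\sigma(x)=\sigma(y)$. I then fix witnesses: a vertex $a\in N(x)$ with $z\in N(a)$ and a vertex $b\in N(y)$ with $z\in N(b)$; properness forces $\sigma(a)=\sigma(b)\neq\sigma(x)=\sigma(y)$. I would also record the elementary inequalities needed later: $x\neq y$ and $a\neq b$ (both follow from $a\in N(x)$ together with $N(x)\cap N(y)=\emptyset$), and $a\neq z$, $b\neq z$ (no loops), so that all the edges written below are genuine.

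The heart of the argument is to deduce, using \AX{(N1)}, that the ``reverse'' arcs $bx$ and $ay$ must be present. Consider the pair $\{x,b\}$: they are distinct (different colors), and if they were independent then, with $t\coloneqq a$ and $w\coloneqq z$, the edges $xa,bz,az\in E(G)$ would form exactly the configuration forbidden by \AX{(N1)}; hence $x$ and $b$ are adjacent. Since $b\in N(y)$ and $N(x)\cap N(y)=\emptyset$, the arc $xb$ is impossible, so $bx\in E(G)$. Applying the symmetric argument to the pair $\{y,a\}$ — using the witness $t\coloneqq b$, $w\coloneqq z$ for \AX{(N1)}, and ruling out $ya\in E(G)$ because $a\in N(x)$ — gives $ay\in E(G)$.

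Finally I would invoke bi-transitivity \AX{(N2)}: from $xa,\,ay,\,yb\in E(G)$ it yields $xb\in E(G)$, i.e.\ $b\in N(x)$; but $b\in N(y)$, so $N(x)\cap N(y)\neq\emptyset$, contradicting the hypothesis. Therefore no such $z$ exists and $N(N(x))\cap N(N(y))=\emptyset$.

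The main obstacle — the step I expect a reader to find least obvious — is recognizing that \AX{(N1)} should be applied to the \emph{mixed-color} pairs $\{x,b\}$ and $\{y,a\}$ rather than to the same-color pair $\{x,y\}$; this is precisely what forces the reverse arcs $bx,ay$, after which the length-three path $x\to a\to y\to b$ lets \AX{(N2)} close the argument. The only other thing to be careful about is the coloring bookkeeping that makes $N(N(\cdot))$ monochromatic (trivializing the case $\sigma(x)\neq\sigma(y)$) and that guarantees the relevant vertices are distinct.
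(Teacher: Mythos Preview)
Your proof is correct and follows essentially the same approach as the paper: fix witnesses $a\in N(x)$, $b\in N(y)$ with a common out-neighbor, apply \AX{(N1)} to a mixed-color pair to force a ``reverse'' arc, and then use \AX{(N2)} along a length-three directed path to produce an element of $N(x)\cap N(y)$. One small remark: your derivation of $bx\in E(G)$ is never used---your \AX{(N2)} step only needs $ay$, so you could drop the first \AX{(N1)} application and mirror the paper's single-sided argument (the paper derives only $vx$ and applies \AX{(N2)} to $yv,vx,xu$); conversely, the paper's separate treatment of $x=w$ and $y=w$ is unnecessary for the same reason that your argument goes through without it.
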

\begin{proof}
  Assume, for contradiction, that $N(x)\cap N(y)=\emptyset$ and there is a
  vertex $w\in N(N(x))\cap N(N(y))$. Then neither $x$ nor $y$ is a sink,
  $x$ and $y$ are distinct, and, since $(G,\sigma)$ is properly colored, we
  have $\sigma(x)=\sigma(y)=\sigma(w)$.  Moreover, there are vertices
  $u\in N(x)$ and $v\in N(y)$ such that $xu, uw, yv, vw \in E(G)$.  In
  particular, $u\ne v$ and $xv,yu\notin E(G)$ because
  $N(x)\cap N(y)=\emptyset$.  Therefore, $x,y,u$, and $v$ must all be
  pairwise distinct.  If $x=w$, then $yv, vw=vx, xu\in E(G)$ and
  $yu\notin E(G)$ contradicts \AX{(N2)}. The case $y=w$ yields an analogous
  contradiction.  Since $(G,\sigma)$ is properly colored, it remains to
  consider the case when all five vertices $x$, $y$, $u$, $v$, and $w$ are
  all pairwise distinct.  Suppose that $x$ and $v$ are
  independent. Together with $xu,vw,uw$ this contradicts \AX{(N1)}.  Hence,
  we must have $vx\in E(G)$.  But then $yv,vx,xu\in E(G)$ and \AX{(N2)}
  imply $yu\in E(G)$; a contradiction.  Therefore, we conclude that a
  vertex $w\in N(N(x))\cap N(N(y))$ cannot exist.
\end{proof}

We are now in the position to generalize~\cite[Lemma~9]{Geiss:19a}, which is 
equivalent to the next statement if one assumes $(G,\sigma)$ to be connected.
\begin{lemma}
  \label{lem:L9}
  Let $(G,\sigma)$ be a properly 2-colored graph satisfying \AX{(N1)},
  \AX{(N2)}, and \AX{(N3)}. Then the set of reachable sets
  $\mathcal{R}\coloneqq \left\{R(x)\mid x \in V(G)\right\}$ forms a
  hierarchy-like set system.
\end{lemma}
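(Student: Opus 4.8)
The plan is to establish the equivalent statement that $R(x)\cap R(y)\in\{\emptyset,R(x),R(y)\}$ for all $x,y\in V(G)$. All the tools I would use are already on the table: the identity $R(x)=N(x)\cup N(N(x))$ (a consequence of \AX{(N2)}), the closure property $R(R(A))\subseteq R(A)$ together with the isotonicity of $R$ (both recorded just above the lemma), and the elementary observation that a properly $2$-colored graph is bipartite, with the two color classes $V_r$ and $V_s$; in particular, for a vertex $x$ of color $c\in\{r,s\}$ we have $N(x)\subseteq V_{\bar c}$ and $N(N(x))\subseteq V_c$, where $\bar c$ is the other color. A first consequence I would note is that $z\in R(x)$ implies $R(z)\subseteq R(x)$, since $R$ is isotonic and $R(R(x))\subseteq R(x)$.

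I would then distinguish two cases according to whether $x$ and $y$ have the same color. If $\sigma(x)=\sigma(y)$, say $x,y\in V_r$, the bipartite structure kills the ``mixed'' intersections and leaves $R(x)\cap R(y)=\bigl(N(x)\cap N(y)\bigr)\cup\bigl(N(N(x))\cap N(N(y))\bigr)$. If $N(x)\cap N(y)=\emptyset$, then Lemma~\ref{lem:N-empty-NN-empty} gives $N(N(x))\cap N(N(y))=\emptyset$ as well, so the intersection is empty. Otherwise \AX{(N3)} allows us to assume, without loss of generality, $N(x)\subseteq N(y)$; isotonicity of $N$ then yields $N(N(x))\subseteq N(N(y))$, hence $R(x)\subseteq R(y)$ and $R(x)\cap R(y)=R(x)$.

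The heterochromatic case is where \AX{(N1)} carries the argument. Assume $x\in V_r$ and $y\in V_s$; the bipartite structure now gives $R(x)\cap R(y)=\bigl(N(x)\cap N(N(y))\bigr)\cup\bigl(N(N(x))\cap N(y)\bigr)$. If $xy\in E(G)$, then $y\in N(x)\subseteq R(x)$, so $R(y)\subseteq R(x)$ and $R(x)\cap R(y)=R(y)$; symmetrically if $yx\in E(G)$. If instead $x$ and $y$ are independent, then \AX{(N1)} asserts exactly that $N(x)\cap N(N(y))=\emptyset$ and $N(N(x))\cap N(y)=\emptyset$, so $R(x)\cap R(y)=\emptyset$. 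In every sub-case the intersection is one of $\emptyset$, $R(x)$, $R(y)$, which is the claim.

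I do not anticipate a genuine obstacle; the one point that needs to be spotted is that, in the heterochromatic case, the two intersections surviving the color bookkeeping are precisely the two that \AX{(N1)} forces to be empty once $x$ and $y$ are independent, so that no heterochromatic sub-case requires further work, and the homochromatic case is routine given Lemma~\ref{lem:N-empty-NN-empty} and \AX{(N3)}. (No sink-freeness is used: if $N(x)=\emptyset$ then $R(x)=\emptyset$, which is already covered by the cases above.)
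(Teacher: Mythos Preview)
Your proof is correct and follows essentially the same route as the paper's: a case split by whether $\sigma(x)=\sigma(y)$, handling the homochromatic case via \AX{(N3)} and Lemma~\ref{lem:N-empty-NN-empty}, and the heterochromatic case via \AX{(N1)}. Your organization is slightly cleaner---by using the bipartite decomposition of $R(x)\cap R(y)$ upfront you apply \AX{(N1)} directly in the independent sub-case, whereas the paper assumes a nonempty intersection and argues by contradiction---but the underlying argument is the same.
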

\begin{proof}
  Let $x,y\in V(G)$. If $x=y$, we trivially have $R(x)\cap
  R(y)=R(x)=R(y)$. If $x$ or $y$ is a sink, we have
  $R(x)\cap R(y)=\emptyset$. Now assume that $R(x)\cap R(y)\ne
  \emptyset$. By Lemma~\ref{lem:N-empty-NN-empty}, this implies either (i)
  $N(x)\cap N(y)\ne \emptyset$, or (ii) $N(x)\cap N(y)=\emptyset$ and
  $N(x)\cap N(N(y))\ne \emptyset$ or $N(N(x))\cap N(y)\ne \emptyset$.
  Since $(G,\sigma)$ is properly colored, Case~(i) implies
  $\sigma(x)=\sigma(y)$ while Case~(ii) can only occur if
  $\sigma(x)\ne\sigma(y)$.  In Case~(i), \AX{(N3)} implies
  $N(x)\subseteq N(y)$ or $N(y)\subseteq N(x)$, which by isotony implies
  $R(x)\subseteq R(y)$ or $R(y)\subseteq R(x)$, respectively.
  
  For Case~(ii), assume $\sigma(x)\ne\sigma(y)$ and
  $N(x)\cap N(N(y))\ne \emptyset$ and let $v\in N(x)\cap N(N(y))$. From
  $\sigma(x)\ne\sigma(y)$, we infer $\sigma(v)=\sigma(y)$.  If $x\in R(y)$,
  then isotony of the map $R$ and the arguments above imply
  $R(x)\subseteq R(R(y))\subseteq R(y)$.  Similarly, $y\in R(x)$ implies
  $R(y)\subseteq R(x)$.  Now assume $x\notin R(y)$ and $y\notin R(x)$ and
  thus in particular $x\notin N(y)$ and $y\notin N(x)$.  Together with
  $v\in N(x)\cap N(N(y))$, this implies that $x,y,v$ are pairwise distinct.
  Moreover, there must be a vertex $w\in N(y)$ with $yw, wv\in E(G)$ and
  $w\notin\{x,y,v\}$. Thus, we have $xv,yw,wv\in E(G)$ and $x$ and $y$ are
  independent vertices $x$ and $y$; a contradiction to \AX{(N1)}.
  Analogous arguments show that only $R(x)\subseteq R(y)$ or
  $R(y)\subseteq R(x)$ are possible if $N(N(x))\cap N(y)\ne \emptyset$.
  Therefore, we have $R(x)\cap R(y)\in\{\emptyset, R(x), R(y)\}$ for all
  $x,y\in V$.
\end{proof}
We note that the proof of Lemma~\ref{lem:L9} in~\cite{Geiss:19a} relies on
the assumption that $(G,\sigma)$ is sink-free~\cite{Schaller:21d}. The
proof above shows that this additional assumption is not
necessary.

As in the special case of BMGs, however, the reachable sets $\mathcal{R}$
do not coincide with the hierarchy $\mathcal{H}(T)$ of a tree explaining
$(G,\sigma)$. This is easily seen by considering
\begin{equation}
  \bigcup_{x\in V(G)} R(x) = \{ y\in V(G)\mid  N^-(y)\ne\emptyset \}
  = V\setminus V_{\text{source}}
\end{equation}
since a source vertex $y$ with $N^-(y)=\emptyset$ is never contained any
set $N(x)$ and thus also not in any reachable set $R(x)$.  As discussed 
in~\cite{Geiss:19a} for the special case of BMGs, it is not sufficient to
simply add $\{x\}$ to $R(x)$, however. Instead, larger sets are required to
handle source vertices as well as vertices that are indistinguishable in
terms of their in-neighborhood. Here, we consider a slightly modified
construction that also accommodates sinks. To this end, we define for any
digraph $G$ and every vertex $x\in V(G)$ the set
\begin{equation}
  Q(x)\coloneqq \left\{y\in V(G) \mid
  N^-(y)= N^-(x) \text{ and }
  \emptyset \ne N(y)\subseteq N(x)\right\}.
\end{equation}
This definition differs from the specification in~\cite{Geiss:19a} in two
aspects: (i) it adds the conditions ``$\emptyset\ne N(y)$'', which is
always satisfied in sink-free graphs and thus in BMGs. (ii) Each vertex is
considered separately here instead of being aggregated into so-called
thinness classes. We collect several simple properties of $Q(x)$ in
Lemma~\ref{lem:L10} below. The straightforward arguments are essentially
the same as in~\cite[Lemma~10]{Geiss:19a}.

\begin{lemma}
  \label{lem:L10}
  Let $(G,\sigma)$ be a properly 2-colored digraph.
  \begin{itemize}\setlength{\itemsep}{0pt}
    \item[(o)]   If $x$ is a sink, i.e., $N(x)=\emptyset$, then
    $Q(x)=\emptyset$; otherwise $N(x)\neq \emptyset$ implies $x\in 
    Q(x)$.
    \item[(i)]   $y\in Q(x)$ implies $\sigma(x)=\sigma(y)$.
    \item[(ii)]  $y\in Q(x)$ implies $Q(y)\subseteq Q(x)$.
    \item[(iii)] $y\in Q(x)$ implies $N(y)\subseteq N(x)$.
    \item[(iv)]  If $x\notin N(y)$, then $Q(x)\cap N(y)=\emptyset$.
    \item[(v)]   If $x\notin N(N(y))$, then $Q(x)\cap N(N(y))=\emptyset$.
    \item[(vi)] $N(x)\cap N(y)=\emptyset$ implies $Q(x)\cap Q(y)=\emptyset$.
  \end{itemize}
\end{lemma}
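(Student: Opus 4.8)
The plan is to verify each of the seven claims by directly unwinding the definition of $Q(x)$; the proper $2$-coloring hypothesis will be needed only for part~(i), and everything else is set-theoretic bookkeeping on the two defining conditions ``$N^-(y)=N^-(x)$'' and ``$\emptyset\neq N(y)\subseteq N(x)$''. I would treat (o), then (i), then group (ii)--(iii), then (iv)--(v), and finally (vi).

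\textbf{Parts (o) and (i).} For (o): if $N(x)=\emptyset$ then no $y$ can satisfy $\emptyset\neq N(y)\subseteq N(x)$, so $Q(x)=\emptyset$; and if $N(x)\neq\emptyset$ then $N^-(x)=N^-(x)$ and $\emptyset\neq N(x)\subseteq N(x)$ hold trivially, so $x\in Q(x)$. For (i), given $y\in Q(x)$ we have $\emptyset\neq N(y)\subseteq N(x)$, so we may pick some $z\in N(y)\subseteq N(x)$; then $xz,yz\in E(G)$, and since $(G,\sigma)$ is properly colored with only two colors, $\sigma(z)\neq\sigma(x)$ and $\sigma(z)\neq\sigma(y)$ force $\sigma(x)=\sigma(y)$. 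This is the single place where the number of colors is used, and producing this common out-neighbor $z$ is essentially the only step in the lemma that is not a one-line rewriting of the definition.

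\textbf{Parts (ii)--(v).} Part~(iii) is immediate from the definition. For (ii), take $z\in Q(y)$, so $N^-(z)=N^-(y)$ and $\emptyset\neq N(z)\subseteq N(y)$; combining this with $N^-(y)=N^-(x)$ and $N(y)\subseteq N(x)$ (from $y\in Q(x)$) gives $N^-(z)=N^-(x)$ and $\emptyset\neq N(z)\subseteq N(x)$, i.e., $z\in Q(x)$. Parts~(iv) and~(v) both exploit the in-neighborhood equality for members of $Q(x)$: if $z\in Q(x)\cap N(y)$ then $y\in N^-(z)=N^-(x)$, hence $yx\in E(G)$, i.e., $x\in N(y)$, contradicting the hypothesis of~(iv); similarly, if $z\in Q(x)\cap N(N(y))$, pick $w\in N(y)$ with $wz\in E(G)$, so $w\in N^-(z)=N^-(x)$, hence $x\in N(w)\subseteq N(N(y))$, contradicting the hypothesis of~(v).

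\textbf{Part (vi).} If $z\in Q(x)\cap Q(y)$, then $\emptyset\neq N(z)\subseteq N(x)$ and $N(z)\subseteq N(y)$, so $N(z)\subseteq N(x)\cap N(y)=\emptyset$, a contradiction. I anticipate no real obstacle here: the statement is a collection of elementary ``warm-up'' facts, and the only mildly substantive observation is the one in part~(i) noted above.
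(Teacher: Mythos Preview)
Your proof is correct and follows essentially the same approach as the paper's own proof: each item is verified by directly unfolding the definition of $Q(x)$, with the proper $2$-coloring used only in part~(i) via a shared out-neighbor. The arguments for all seven parts match the paper's almost line by line.
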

\begin{proof}
  (o) If $x$ is a sink, the condition becomes
  $\emptyset\ne N(y)\subseteq\emptyset$ and thus no such $y$
  exists. Otherwise, both conditions are trivially true for $y=x$.\\
  (i) Since $y\in Q(x)$ implies that $x$ and $y$ share at least one
  out-neighbor in the bipartite graph $G$, $x$ and $y$ are in the same
  color class.\\
  (ii) Let $y\in Q(x)$ and $z\in Q(y)$. Then by definition of $Q$,
  $ N^-(z)= N^-(y)= N^-(x)$ and
  $\emptyset\ne N(z)\subseteq N(y)\subseteq N(x)$, which implies
  $z\in Q(x)$ for all $z\in Q(y)$ and thus $Q(y)\subseteq Q(x)$.\\
  (iii) follows immediately from the definition of $Q$.\\
  (iv) Suppose $x\notin N(y)$ but there is $z\in Q(x)\cap N(y)$. Then
  $y\in N^-(z)= N^-(x)$ and thus $x\in N(y)$; a
  contradiction.\\
  (v) Suppose $x\notin N(N(y))$ but there is $z\in Q(x)\cap N(N(y))$.
  Thus there is $w\in N(y)$ such that $w\in
  N^-(z)= N^-(x)$ and therefore $x\in N(N(y))$; a
  contradiction.\\
  (vi) Let $N(x)\cap N(y)=\emptyset$ and assume, for contradiction, that  
  $Q(x)\cap Q(y)\neq \emptyset$. 
  Thus, there is a $z\in Q(x)\cap Q(y)$. By definition of $Q$, $\emptyset \neq 
  N(z) \subseteq N(x),N(y)$ and thus  $N(x)\cap N(y)\neq\emptyset$; a 
  contradiction.
\end{proof}

With the help of $Q$, we are now in the position to define the extended
reachable set for any digraph $G$ and every vertex $x\in V(G)$ as follows:
\begin{equation}
  R'(x) \coloneqq R(x)\cup Q(x)
\end{equation}
by analogy to the construction of 2-BMGs. For BMGs, the $R'(x)$ reduces to
the corresponding sets in~\cite{Geiss:19a} since BMGs have no sinks.  Note,
if $x$ is a sink, i.e., $N(x)=\emptyset$, then also $N(N(x))=\emptyset$ and
thus $R'(x) = \emptyset$. Conversely, $N(x)\subseteq R'(x) = \emptyset$
implies that $x$ is a sink. Therefore, we have
\begin{fact}\label{fact:R'empty}
  $R'(x) = \emptyset$ if and only if $x$ is a sink.
\end{fact}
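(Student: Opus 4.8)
The plan is to prove both implications directly from the definitions, with the only external input being Lemma~\ref{lem:L10}(o); no neighborhood axioms \AX{(N1)}--\AX{(N3)} are needed here.

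For the forward direction I would assume $x$ is a sink, i.e.\ $N(x)=\emptyset$. Since $N$ applied to the empty set is empty, $N(N(x))=N(\emptyset)=\emptyset$, and by an immediate induction every iterate appearing in the series $R(x)=N(x)\cup N(N(x))\cup N(N(N(x)))\cup\cdots$ is empty, so $R(x)=\emptyset$. By Lemma~\ref{lem:L10}(o), a sink $x$ has $Q(x)=\emptyset$ as well. Hence $R'(x)=R(x)\cup Q(x)=\emptyset$.

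For the converse I would assume $R'(x)=\emptyset$. Since $N(x)\subseteq R(x)\subseteq R'(x)$ by construction of $R$ and $R'$, this forces $N(x)=\emptyset$, i.e.\ $x$ is a sink, completing the equivalence.

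I do not anticipate any genuine obstacle: the statement is a direct bookkeeping consequence of the definitions of $R$, $Q$, and $R'$ together with the already-recorded fact that $Q(x)=\emptyset$ precisely when $x$ is a sink. The only point worth stating explicitly in the write-up is that the collapse $R(x)=\emptyset$ does \emph{not} rely on the simplification $R(x)=N(x)\cup N(N(x))$ (which needs \AX{(N2)}); it holds termwise as soon as $N(x)=\emptyset$.
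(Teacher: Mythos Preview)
Your proposal is correct and follows essentially the same reasoning as the paper: the paper argues in the text immediately preceding the observation that a sink has $N(x)=\emptyset$, hence $N(N(x))=\emptyset$ and $R'(x)=\emptyset$, and conversely that $N(x)\subseteq R'(x)=\emptyset$ forces $x$ to be a sink. Your write-up is slightly more explicit (in particular your remark that the collapse of $R(x)$ does not require \AX{(N2)} is a nice clarification), but the underlying argument is the same.
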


\begin{fact}\label{fact:R'distinctColor}
  For a properly 2-colored digraph $(G,\sigma)$, we have $y\in N(x)$ if and
  only if $y\in R'(x)$ and $\sigma(x)\neq \sigma(y)$.
\end{fact}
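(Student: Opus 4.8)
The plan is to prove the two implications separately, since only the converse carries any content. For the forward implication, suppose $y\in N(x)$. Because $\sigma$ is a proper coloring, the edge $xy$ forces $\sigma(x)\neq\sigma(y)$ (equal colors would forbid the edge), and by construction $N(x)\subseteq R(x)\subseteq R(x)\cup Q(x)=R'(x)$, so $y\in R'(x)$ as well. This half needs no further hypotheses beyond properness.

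For the converse, suppose $y\in R'(x)$ with $\sigma(x)\neq\sigma(y)$; the idea is to show $y\in N(x)$ by eliminating the other pieces of $R'(x)$ via a coloring argument. Recall that the reachable set collapses to $R(x)=N(x)\cup N(N(x))$ (the consequence of \AX{(N2)} noted earlier), so $R'(x)=N(x)\cup N(N(x))\cup Q(x)$. If $y\in Q(x)$, then Lemma~\ref{lem:L10}(i) gives $\sigma(x)=\sigma(y)$, contradicting our assumption. If $y\in N(N(x))$, choose $z$ with $xz,zy\in E(G)$; since $(G,\sigma)$ is properly $2$-colored, the edge $xz$ gives $\sigma(z)\neq\sigma(x)$ and then the edge $zy$ gives $\sigma(y)\neq\sigma(z)$, so with only two colors available $\sigma(y)=\sigma(x)$, again a contradiction. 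The only remaining possibility is $y\in N(x)$, which finishes the argument.

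I do not expect a genuine obstacle here — the proof is only a few lines — but the step that deserves attention is the collapse $R(x)=N(x)\cup N(N(x))$, which rests on \AX{(N2)}: without it, $R(x)$ would also contain the higher odd iterates $N(N(N(x))),\,N(N(N(N(N(x))))),\dots$, all of which carry the color opposite to $\sigma(x)$ (hence equal to $\sigma(y)$) while not necessarily lying in $N(x)$, so the converse would fail. Thus the proof relies on exactly three ingredients: properness of $\sigma$ to pin down the colors along directed walks of length at most two (the bipartite alternation of color classes), the \AX{(N2)}-consequence $R(x)=N(x)\cup N(N(x))$, and Lemma~\ref{lem:L10}(i) to dispose of the $Q$-part; the rest is bookkeeping.
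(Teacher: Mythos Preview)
Your argument is correct and matches the paper's one-line proof, which simply notes that all vertices in $N(N(x))$ and $Q(x)$ have the same color as $x$. Your observation that the collapse $R(x)=N(x)\cup N(N(x))$ requires \AX{(N2)} is well taken---the paper's proof tacitly relies on the same decomposition, and the Observation is only ever invoked in contexts where \AX{(N2)} holds.
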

\begin{proof}
  While the \emph{only-if}-direction is trivial, the \emph{if}-direction
  follows from the fact that all vertices in $N(N(x))$ and $Q(x)$ must have
  the same color as $x$.
\end{proof}

\par\noindent
The following result mirrors~\cite[Lemma~11]{Geiss:19a}.
\begin{lemma}
  \label{lem:ext-R-hierarchy}
  If $(G,\sigma)$ is properly two-colored graph satisfying \AX{(N1)},
  \AX{(N2)}, and \AX{(N3)}, then
  $\mathcal{R}'\coloneqq \{R'(x) \mid x\in V(G)\} \setminus \{\emptyset\}$
  forms a hierarchy-like set system.
\end{lemma}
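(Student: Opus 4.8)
The plan is to prove the hierarchy-like property one pair at a time, i.e.\ to show that $R'(x)\cap R'(y)\in\{\emptyset,R'(x),R'(y)\}$ for all $x,y\in V(G)$. By Fact~\ref{fact:R'empty} the non-empty members of $\mathcal{R}'$ are exactly the sets $R'(x)$ with $x$ a non-sink, and for such $x$ we have $x\in Q(x)\subseteq R'(x)$ by Lemma~\ref{lem:L10}(o). I will isolate two auxiliary statements: \textbf{(M)} if $y\in R'(x)$ then $R'(y)\subseteq R'(x)$; and \textbf{(C)} if $R'(x)\cap R'(y)\neq\emptyset$ then $x\in R'(y)$ or $y\in R'(x)$. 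These finish the proof at once: if $R'(x)\cap R'(y)\neq\emptyset$, then by \textbf{(C)}, say $x\in R'(y)$, and then \textbf{(M)} gives $R'(x)\subseteq R'(y)$, so $R'(x)\cap R'(y)=R'(x)$; the case $y\in R'(x)$ is symmetric.

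For \textbf{(M)} I use $R(x)=N(x)\cup N(N(x))$ (a consequence of \AX{(N2)}), so that $R'(x)=N(x)\cup N(N(x))\cup Q(x)$, and I treat three cases for $y\in R'(x)$. If $y\in Q(x)$, then $Q(y)\subseteq Q(x)$ and $N(y)\subseteq N(x)$ by Lemma~\ref{lem:L10}(ii),(iii), hence $R(y)\subseteq R(x)$ by isotony of $N$. If $y\in N(x)$, then $N(y)\subseteq N(N(x))$ and $N(N(y))\subseteq N(N(N(x)))\subseteq N(x)$ by \AX{(N2)}, so $R(y)\subseteq R(x)$; moreover $x\in N^-(y)$, so every $w\in Q(y)$ has $x\in N^-(w)$ and thus $w\in N(x)\subseteq R(x)$. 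The case $y\in N(N(x))$ is analogous using an intermediate $v\in N(x)$ with $y\in N(v)$: then $N(y)\subseteq N(N(v))\subseteq N(x)$, $N(N(y))\subseteq N(N(x))$, and $v\in N^-(w)$ for $w\in Q(y)$ forces $w\in N(v)\subseteq N(N(x))$. In all three cases $R'(y)=R(y)\cup Q(y)\subseteq R(x)\subseteq R'(x)$.

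For \textbf{(C)} I write $R'(x)\cap R'(y)$ as the union of $R(x)\cap R(y)$, $R(x)\cap Q(y)$, $Q(x)\cap R(y)$, and $Q(x)\cap Q(y)$; by Lemma~\ref{lem:L10}(vi) a non-empty last set already forces $N(x)\cap N(y)\neq\emptyset$, hence $R(x)\cap R(y)\neq\emptyset$. So, up to swapping $x$ and $y$, it suffices to treat $R(x)\cap Q(y)\neq\emptyset$ and $R(x)\cap R(y)\neq\emptyset$. In the first case a common element $z$ lies in $N(x)$ or $N(N(x))$ and satisfies $N^-(z)=N^-(y)$; tracing the edge into $z$ (either $z\in N(x)$, so $x\in N^-(z)=N^-(y)$ and $y\in N(x)\subseteq R'(x)$, or $z\in N(v)$ for some $v\in N(x)$, so $v\in N^-(z)=N^-(y)$ and $y\in N(v)\subseteq N(N(x))\subseteq R'(x)$) yields $y\in R'(x)$. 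In the second case, by Lemma~\ref{lem:L9} I may assume $R(x)\subseteq R(y)$; if $x\in R(y)$ we are done, so assume $x\notin R(y)$, which forces $x$ to be a non-sink with $N(x)\neq\emptyset$ and $N(x)\subseteq R(y)$. If $N(x)\cap N(y)\neq\emptyset$, then $\sigma(x)=\sigma(y)$, so $x$ and $y$ are independent and $x\notin N(N(y))$; either $y\in N(N(x))\subseteq R'(x)$, or else $y\notin N(N(x))$ and \AX{(N3')} (available since $(G,\sigma)$ satisfies \AX{(N1)},\AX{(N2)},\AX{(N3)}, cf.\ Lemma~\ref{lem:N1N2N3-old-eq-new}) gives $N^-(x)=N^-(y)$ together with $N(x)\subseteq N(y)$ or $N(y)\subseteq N(x)$, placing $x\in Q(y)\subseteq R'(y)$ resp.\ $y\in Q(x)\subseteq R'(x)$. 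If instead $N(x)\cap N(y)=\emptyset$, then $N(x)\subseteq N(N(y))$; comparing colours (only two are available) forces $\sigma(x)\neq\sigma(y)$, and then $R(x)\subseteq R(y)$ also gives $N(N(x))\subseteq N(y)$.

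It remains to show $xy\in E$ in this last configuration, which gives $y\in N(x)\subseteq R'(x)$; this is the main obstacle, and it is exactly the point at which the ``$\emptyset\neq N(y)$'' refinement in the definition of $Q$ and the accommodation of sinks matter, whereas everything else above is bookkeeping with isotony, \AX{(N2)}, Lemma~\ref{lem:L10}, and Lemma~\ref{lem:L9}. Suppose $xy\notin E$; then $yx\notin E$ as well (otherwise $x\in N(y)\subseteq R(y)$), so $x$ and $y$ are independent. If some $t\in N(x)$ is not a sink, then $\emptyset\neq N(t)\subseteq N(N(x))\subseteq N(y)$, and any $w\in N(t)$ produces edges $xt$, $yw$, $tw$, contradicting \AX{(N1)}. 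Otherwise $N(N(x))=\emptyset$; choose $q\in N(x)$ and $v\in N(y)$ with $q\in N(v)$, so $\sigma(v)=\sigma(x)$, $v\neq x$ (as $v\in R(y)\not\ni x$), and $x,v$ are independent with $q\in N(x)\cap N(v)$. Here $x\notin N(N(v))$ (else $x\in N(N(v))\subseteq N(N(N(y)))\subseteq N(y)$ by \AX{(N2)}) and $v\notin N(N(x))=\emptyset$, so \AX{(N3')} gives $N^-(x)=N^-(v)$; but $v\in N(y)$ then gives $y\in N^-(v)=N^-(x)$, i.e.\ $x\in N(y)\subseteq R(y)$, a contradiction. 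Hence $xy\in E$, which establishes \textbf{(C)} and thus the lemma.
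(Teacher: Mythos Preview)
Your argument is correct and shares the paper's overall architecture: first establish the monotonicity claim \textbf{(M)} (identical to the paper's first step), then the dichotomy \textbf{(C)}. The difference lies in how \textbf{(C)} is attacked. The paper takes the contrapositive, assuming $x\notin R'(y)$ and $y\notin R'(x)$; then independence of $x$ and $y$ plus \AX{(N1)} immediately kills the cross terms $N(x)\cap N(N(y))$ and $N(N(x))\cap N(y)$, and in the sub-case $N(x)\cap N(y)=\emptyset$ it simply invokes Lemma~\ref{lem:N-empty-NN-empty} and Lemma~\ref{lem:L10}(vi) to finish. You instead work in the direct direction, leaning on Lemma~\ref{lem:L9} to get $R(x)\subseteq R(y)$ up front; this is convenient for the $Q$-pieces and the same-colour sub-case, but in the sub-case $\sigma(x)\ne\sigma(y)$ with $N(x)\cap N(y)=\emptyset$ it forces you to manufacture the edge $xy$ by hand via a further case split on whether $N(N(x))=\emptyset$, with a second application of \AX{(N3')}. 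That last paragraph is sound, but it is precisely the extra work that the contrapositive formulation lets the paper sidestep.
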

\begin{proof}
  Let $R'(x), R'(y)\in \mathcal{R}'$ for two distinct vertices
  $x,y\in V(G)$.  By definition of $\mathcal{R}'$, neither of $R'(x)$ and
  $R'(y)$ is empty.  This and Observation~\ref{fact:R'empty} implies that 
  neither $x$ nor $y$ is a sink.
  
  We first show that $y\in R'(x)$ implies $R'(y)\subseteq R'(x)$. Let
  $y\in R'(x)$.  If $y\in Q(x)$, then Lemma~\ref{lem:L10} yields
  $Q(y)\subseteq Q(x)$ and $N(y)\subseteq N(x)$, and thus also
  $R(y)\subseteq R(x)$ and finally $R'(y)\subseteq R'(x)$. If $y\in R(x)$
  then $R(y)\subseteq R(x)\subseteq R'(x)$ since $y\in R(x)$ implies that
  everything that is reachable from $y$ is also reachable from $x$. By
  definition of $Q$, $N^-(z)= N^-(y)$ for all $z\in Q(y)$.  Thus
  $y\in R(x)=N(x)\cup N(N(x))$ implies
  $z\in N(x)\cup N(N(x))=R(x)\subseteq R'(x)$.  Hence, we have
  $Q(y)\subseteq R'(x)$ and, in summary,
  $R'(y)=R(y)\cup Q(y)\subseteq R'(x)$.  By analogous arguments,
  $x\in R'(y)$ implies $R'(x)\subseteq R'(y)$.
  
  Now suppose $y\notin R'(x)$ and $x\notin R'(y)$. We will show that this
  implies $R'(x)\cap R'(y)=\emptyset$. Since $x\notin R'(y)$, we have
  $x\notin N(y)$ and $x\notin N(N(y))$. This together with
  Lemma~\ref{lem:L10}(iv)-(v) implies $Q(x)\cap N(y)=\emptyset$ and
  $Q(x)\cap N(N(y))=\emptyset$, respectively. Thus
  $Q(x)\cap R(y)=\emptyset$. By similar arguments, we obtain
  $R(x)\cap Q(y)=\emptyset$ from $y\notin R'(x)$.  From $y\notin R'(x)$ and
  $x\notin R'(y)$, we have $y\notin N(x)$ and $x\notin N(y)$, which
  together with \AX{(N1)} implies
  $N(N(x)) \cap N(y) = N(x) \cap N(N(y))=\emptyset$.  Taken together, the
  latter arguments imply
  $R'(x)\cap R'(y)=(N(x)\cap N(y))\cup (N(N(x))\cap N(N(y))) \cup (Q(x)\cap
  Q(y))$. If $N(x)\cap N(y)=\emptyset$, Lemma~\ref{lem:N-empty-NN-empty}
  implies $N(N(x))\cap N(N(y))=\emptyset$ and Lemma~\ref{lem:L10}(vi)
  implies $Q(x)\cap Q(y)=\emptyset$. Therefore,
  $R'(x)\cap R'(y)=\emptyset$. Now assume $N(x)\cap N(y)\ne
  \emptyset$. From \AX{(N3)} the two cases $N(y)\subseteq N(x)$ or
  $N(x)\subseteq N(y)$ arise. In the former case, \AX{(N3')}, which is
  satisfied as a consequence of Lemma~\ref{lem:N1N2N3-old-eq-new}, together
  with $x\notin N(y)$, $y\notin N(x)$ and $N(x)\cap N(y)\ne \emptyset$
  implies $N^-(x)= N^-(y)$ and thus $Q(y)\subseteq Q(x)$.  Furthermore, we
  have $N(N(y))\subseteq N(N(x))$ by isotony of $N$, and thus
  $R'(y)\subseteq R'(x)$. Since $y\in R'(y)$, this contradicts
  $y\notin R'(x)$. In the latter case, analogous arguments yield
  $R'(x)\subseteq R'(y)$, contradicting $x\notin R'(y)$. Therefore, the
  case $N(x)\cap N(y)\neq \emptyset$ cannot occur and we indeed have
  $R'(x)\cap R'(y)=\emptyset$.
  
  In summary, $\mathcal{R}'$ forms a hierarchy-like set system.
\end{proof}

It should be stressed that we have made no assumptions about the connectedness
of $(G,\sigma)$, which has not been the case in the discussion 
in~\cite{Geiss:19a}.  The Hasse diagram $T(\mathcal{R}')$ therefore will in
general be a forest rather than a tree.  Still, the hierarchy-like set
system $\mathcal{R}'$ can easily be extended to a hierarchy by adding all
singletons as minimal elements and $V(G)$ as maximal element:
\begin{proposition}
  \label{prop:hierarchy}
  Let $(G,\sigma)$ be a properly 2-colored digraph satisfying \AX{(N1)},
  \AX{(N2)}, and \AX{(N3)}. Then
  \begin{equation}
    \mathcal{H}(G,\sigma) \coloneqq \mathcal{R}' \cup \left\{ \{x\}\mid x\in
    V(G)\right\} \cup \{ V(G) \}
  \end{equation}
  is a hierarchy on $V(G)$.
\end{proposition}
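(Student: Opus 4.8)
The plan is to verify the three defining properties of a hierarchy from the \textbf{Hierarchies} paragraph directly, reducing the only substantial case to Lemma~\ref{lem:ext-R-hierarchy}. First I would observe that every member of $\mathcal{H}(G,\sigma)$ is non-empty: the singletons $\{x\}$ and the ground set $V(G)$ are non-empty, and $\emptyset$ is explicitly removed from $\mathcal{R}'$. Properties (ii) and (iii) then hold by construction, since $\{x\}\in\mathcal{H}(G,\sigma)$ for every $x\in V(G)$ and $V(G)\in\mathcal{H}(G,\sigma)$.

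It remains to check property (i), i.e., $U\cap W\in\{\emptyset,U,W\}$ for all $U,W\in\mathcal{H}(G,\sigma)$. I would split according to which of the three ``layers'' $\mathcal{R}'$, $\{\{x\}\mid x\in V(G)\}$, and $\{V(G)\}$ the sets $U$ and $W$ belong to. If $U=V(G)$ (or $W=V(G)$), then $U\cap W=W$ (resp.\ $U$), which lies in $\{\emptyset,U,W\}$. If $U=\{x\}$ is a singleton, then $U\cap W$ equals $\{x\}=U$ when $x\in W$ and $\emptyset$ otherwise, so again $U\cap W\in\{\emptyset,U,W\}$; the case where $W$ is a singleton is symmetric. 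The only remaining case is $U,W\in\mathcal{R}'$, and this is precisely the content of Lemma~\ref{lem:ext-R-hierarchy}, namely that $\mathcal{R}'$ is hierarchy-like and hence satisfies (i).

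I do not expect a genuine obstacle here: the proposition merely packages Lemma~\ref{lem:ext-R-hierarchy} together with the elementary fact that adjoining all singletons and the top set $V(G)$ to a hierarchy-like system of non-empty subsets of $V(G)$ turns it into a hierarchy. The only points worth a brief remark are that $V(G)\neq\emptyset$ (guaranteed since we always consider graphs with at least one vertex, e.g.\ $K_1$), so that property (iii) is meaningful, and that $R'$ produces subsets of $V(G)$, so that $\mathcal{H}(G,\sigma)\subseteq 2^{V(G)}$ as required.
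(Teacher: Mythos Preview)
Your proposal is correct and matches the paper's approach: the paper states the proposition without an explicit proof, treating it as an immediate consequence of Lemma~\ref{lem:ext-R-hierarchy} together with the trivial observation that adjoining all singletons and $V(G)$ to a hierarchy-like system yields a hierarchy. You have simply spelled out the routine case analysis that the paper leaves implicit.
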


The following result parallels part of the proof of~\cite[Theorem~4]{Geiss:19a}.
\begin{lemma}
  \label{lem:n1n2n3-qBMG}
  Let $(G,\sigma)$ be a properly 2-colored digraph satisfying \AX{(N1)},
  \AX{(N2)}, and \AX{(N3)}, let $T$ be the tree with hierarchy 
  $ \mathcal{H}(T)  = \mathcal{H}(G,\sigma)$ and
  define for all $x\in V(G)$ and $r\ne\sigma(x)$ the truncation map by 
  $u(x,r)=\rho_T$ if
  $N(x)\ne\emptyset$ and $u(x,r)=x$ if $N(x)=\emptyset$. Then
  $(G,\sigma)=\qbmg(T,\sigma,u)$.
\end{lemma}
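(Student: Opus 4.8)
The plan is to show $(G,\sigma)=(\hat G,\sigma)$, where $(\hat G,\sigma)\coloneqq\qbmg(T,\sigma,u)$, by proving $N_{\hat G}(x)=N_G(x)$ for every $x\in V(G)$; since both graphs are properly colored by $\sigma$ on $V(G)=L(T)$, this determines the graph. Fix $x$ and write $r\ne\sigma(x)$ for the other color, so that $N(x)=N(x,r)$; note that $u$ is a genuine truncation map because $u(x,\sigma(x))=x$ and $u(x,r)\in\{\rho_T,x\}$ both lie on the path from $\rho_T$ to $x$. If $N_G(x)=\emptyset$, then $u(x,r)=x$, and condition (ii) of Definition~\ref{def:qbm} forces $\lca_T(x,y)\preceq_T x$ for any out-neighbor $y$ of $x$, which is impossible for a leaf $x$; hence $N_{\hat G}(x)=\emptyset$ as well.

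So assume $N_G(x)\ne\emptyset$. Then $u(x,r)=\rho_T$, condition (ii) becomes vacuous, and $N_{\hat G}(x)$ is precisely the set of best matches of $x$ in $(T,\sigma)$. By Observation~\ref{fact:R'empty}, $R'(x)\ne\emptyset$, so $R'(x)\in\mathcal{R}'\subseteq\mathcal{H}(G,\sigma)=\mathcal{H}(T)$; let $v^\ast$ be the vertex of $T$ with $L(T(v^\ast))=R'(x)$. The key claim I would establish is: $\lca_T(x,y)=v^\ast$ for every $y\in N_G(x)$, and $\lca_T(x,y')\succeq_T v^\ast$ for every $y'\in V(G)\setminus\{x\}$.

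To prove the claim, recall that in a hierarchy the sets containing a fixed element form a chain, so $\lca_T(a,b)$ corresponds to the $\subseteq$-minimal set of $\mathcal{H}(T)$ containing $\{a,b\}$. Since $x\in Q(x)\subseteq R'(x)$ and $\emptyset\ne N(x)\subseteq R'(x)$ with $N(x)$ avoiding the color $\sigma(x)$, we have $|R'(x)|\ge2$, and for $y\in N_G(x)\subseteq R(x)\subseteq R'(x)$ we get $\lca_T(x,y)\preceq_T v^\ast$. The crucial observation is that no $A\in\mathcal{H}(T)$ with $|A|\ge2$ satisfies $\{x\}\subseteq A\subsetneq R'(x)$: such $A$ is not a singleton, and not $V(G)$ because $A\subsetneq R'(x)\subseteq V(G)$, hence $A=R'(z)$ for some $z$; but then $x\in R'(z)$, and the nesting property ``$x\in R'(z)\Rightarrow R'(x)\subseteq R'(z)$'' (established in the first step of the proof of Lemma~\ref{lem:ext-R-hierarchy}) forces $R'(x)\subseteq R'(z)=A$, contradicting $A\subsetneq R'(x)$. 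Applying this with $A$ the set corresponding to $\lca_T(x,y)$ upgrades $\lca_T(x,y)\preceq_T v^\ast$ to equality. For $y'\ne x$, the set $A$ corresponding to $\lca_T(x,y')$ contains $x\in R'(x)$, so by the hierarchy property of $\mathcal{H}(T)$ (Proposition~\ref{prop:hierarchy}) it is $\subseteq$-comparable with $R'(x)$; the preceding observation rules out $A\subsetneq R'(x)$, so $R'(x)\subseteq A$, i.e. $\lca_T(x,y')\succeq_T v^\ast$.

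Finally I would combine the claim with Observation~\ref{fact:R'distinctColor}. If $y\in N_G(x)$, then $\lca_T(x,y)=v^\ast\preceq_T\lca_T(x,y')$ for every $y'$ of color $r$, so $y$ is a best match of $x$, i.e. $y\in N_{\hat G}(x)$. Conversely, if $y$ is a best match of $x$, pick any $\tilde y\in N_G(x)$; then $\lca_T(x,y)\preceq_T\lca_T(x,\tilde y)=v^\ast$, while the claim also gives $\lca_T(x,y)\succeq_T v^\ast$, so $\lca_T(x,y)=v^\ast$, whence $y\in L(T(v^\ast))=R'(x)$, and since $\sigma(y)=r\ne\sigma(x)$, Observation~\ref{fact:R'distinctColor} yields $y\in N_G(x)$. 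Hence $N_{\hat G}(x)=N_G(x)$ in every case, and $(G,\sigma)=\qbmg(T,\sigma,u)$. The main obstacle is the claim, and within it the verification that no set $A$ can slip strictly between $\{x\}$ and $R'(x)$ in $\mathcal{H}(T)$; this is exactly where the nesting property extracted from the proof of Lemma~\ref{lem:ext-R-hierarchy} does the real work, and one has to keep careful track that the intermediate set $A$ is neither a singleton nor all of $V(G)$.
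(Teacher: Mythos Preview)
Your proof is correct. The overall strategy matches the paper's---both use that the non-singleton proper clusters of $\mathcal{H}(G,\sigma)$ are exactly the sets $R'(z)$, together with Observation~\ref{fact:R'distinctColor}---but you organize the argument differently and more economically. You isolate a single structural fact, namely that $R'(x)$ is the $\subseteq$-minimal non-singleton set in $\mathcal{H}(T)$ containing $x$ (equivalently, $v^\ast=\lca_T(x,y')$ for every $y'\ne x$ with $y'\in R'(x)$, and $v^\ast\preceq_T\lca_T(x,y')$ otherwise), and derive both inclusions $N_{\hat G}(x)\subseteq N_G(x)$ and $N_G(x)\subseteq N_{\hat G}(x)$ from it. The paper instead argues each inclusion separately by contradiction; in particular, for $N_G(x)\subseteq N_{\hat G}(x)$ it assumes a closer competitor $y'$, writes $L(T(\lca_T(x,y')))=R'(z)$, and then runs a three-way case split on whether $x\in N(z)$, $x\in N(N(z))$, or $x\in Q(z)$ to force $y\in R'(z)$. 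Your route sidesteps this case analysis by invoking the nesting property ``$x\in R'(z)\Rightarrow R'(x)\subseteq R'(z)$'' that was already established in the proof of Lemma~\ref{lem:ext-R-hierarchy}; that property encapsulates exactly the same case split, so you are not really doing less work, just reusing it rather than repeating it. One stylistic point: citing an intermediate step inside another proof is acceptable here since the paper states it explicitly, but in a self-contained write-up you might prefer to promote that nesting property to its own lemma.
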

\begin{proof}
  By Proposition~\ref{prop:hierarchy}, $\mathcal{H}(G,\sigma)$ is a
  hierarchy on $V(G)$ and thus $L(T)=V(G)$.  In particular, $(G,\sigma)$
  and $\qbmg(T,\sigma,u)$ have the same vertex set.  Denote by
  $\tilde N(x)$ the out-neighbors of $x\in V(G)$ in $\qbmg(T,\sigma,u)$ and
  write $N(x)$ for the out-neighbors in $(G,\sigma)$.  We prove that
  $y\in \tilde N(x)$ if and only if $y\in N(x)$ for all $x,y\in V(G)$.  By
  Observation~\ref{fact:R'distinctColor}, $y\in N(x)$ is equivalent to
  $y\in R'(x)$ and $\sigma(y)\ne\sigma(x)$.
  
  First assume $y\in \tilde N(x)$.  By construction, we have
  $\sigma(y)\ne\sigma(x)$ and $N(x)\ne \emptyset$ since otherwise
  $u(x,\sigma(y))=x$ contradicts $y\in\tilde N(x)$. Assume, for
  contradiction, that $y\notin R'(x)$. Since $N(x)\ne\emptyset$, there is a
  vertex $y'\in N(x)\subseteq R'(x)$ such that $y'\ne y$ and
  $\sigma(y')=\sigma(y)$. Since $\mathcal{H}(G,\sigma)$ is a hierarchy, see
  Proposition~\ref{prop:hierarchy}, there is a unique inclusion-minimal set
  $R^*\in\mathcal{H}(G,\sigma)$ with $x,y \in R^*$.  However, $x\in
  R'(x)\cap R^*$ together with $y\notin R'(x)$ and the fact that $R'(x)$
  and $R^*$ are both sets in the hierarchy $\mathcal{H}(G,\sigma)$ implies
  that $R'(x)\subsetneq R^*$. Denoting by $v$ and $v^*$ the vertices of $T$
  satisfying $L(T(v))=R'(x)$ and $L(T(v^*))=R^*$, respectively, we have
  $v\prec_T v^*$. Since $R^*$ is inclusion-minimal with respect to
  the property $x,y \in R^*$, we have $\lca_{T}(x,y)=v^*$.  Similarly,
  $x,y'\in R'(x)$ implies that $\lca_T(x,y')\preceq_T v$. In summary, we
  obtain $\lca_T(x,y')\preceq_T v\prec_T v^*=\lca_{T}(x,y)$ and
  $\sigma(y)=\sigma(y')$. Thus $y$ cannot be a best match of $x$, which
  contradicts $y\in\tilde N(x)$. Therefore, we have $y\in R'(x)$ and thus
  $y\in N(x)$.
  
  For the converse, assume $y\in N(x)$, i.e., $y\in R'(x)$ and
  $\sigma(y)\ne\sigma(x)$.  In particular, therefore, the truncation map
  $u(x,\sigma(y))=\rho_T$ imposes no further constraint and
  $y\in \tilde{N}(x)$ if and only if $y$ is a best match of $x$.  Now
  assume, for contradiction, that $y$ is not a best match of $x$, i.e.,
  there is some vertex $y'\in V(G)$ of color $\sigma(y')=\sigma(y)$ such
  that $v\coloneqq\lca_{T}(x,y')\prec_T \lca_T(x,y)$.  Thus, we have
  $x,y'\in L(T(v))$ and $y\notin L(T(v))$.  In particular, by construction
  of $\mathcal{R}'$, there is a vertex $z\in V(G)$ such that
  $L(T(v))=R'(z)$.  Since $y\in R'(x)$ and $y\notin R'(z)$, we have
  $x\ne z$.  We have to consider three cases according to the constituents
  of $R'(z)=N(z)\cup N(N(z))\cup Q(z)$: If $x\in N(z)$ then
  $y\in N(N(z))\subseteq R'(z)$; a contradiction. If $x\in N(N(z))$, then
  there is $w\notin \{x,y,z\}$ such that $zw,wx,xy\in E(G)$ which, together
  with \AX{(N2)}, implies $y\in N(z)\subseteq R'(z)$; a contradiction.
  Finally, if $x\in Q(z)$, then by definition
  $y\in N(x)\subseteq N(z)\subseteq R'(z)$; a contradiction.  Since none of
  these cases is possible, we conclude that no vertex $y'$ as specified can
  exists and thus $y$ is a best match of $x$, i.e., $y\in \tilde{N}(x)$.
  
  Now, $y\in \tilde N(x)$ if and only if $y\in N(x)$ for all $x,y\in V(G)$
  immediately implies $\tilde N(x)=N(x)$ for all $x\in V(G)$, and thus the
  two 2-colored graphs coincide.
\end{proof}

Combining Lemma~\ref{lem:n1n2n3_quasi} and Lemma~\ref{lem:n1n2n3-qBMG} we
obtain the desired characterization of qBMGs:
\begin{theorem}\label{thm:char2qBMG}
  A properly 2-colored graph is a qBMG if and only if it satisfies
  \AX{(N1)}, \AX{(N2)}, and \AX{(N3)}.
\end{theorem}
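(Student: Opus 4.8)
The plan is to assemble the theorem directly from the two lemmas immediately preceding it, which between them already carry out all the work; the statement is an equivalence, so I would treat the two directions separately.

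For the \emph{only-if} direction, suppose $(G,\sigma)$ is a properly 2-colored qBMG. I would first observe that $(G,\sigma)$ is then a 2-qBMG in the sense used here: either $|\sigma(V(G))|=2$, or $(G,\sigma)$ is monochromatic and hence edge-less (qBMGs are properly colored), and the monochromatic edge-less graphs were explicitly admitted as 2-qBMGs and trivially satisfy \AX{(N1)}, \AX{(N2)}, \AX{(N3)} because all out-neighborhoods are empty. In the genuine two-color case, Lemma~\ref{lem:n1n2n3_quasi} states precisely that every 2-qBMG satisfies \AX{(N1)}, \AX{(N2)}, and \AX{(N3)}, so there is nothing further to prove.

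For the \emph{if} direction, suppose $(G,\sigma)$ is a properly 2-colored graph satisfying \AX{(N1)}, \AX{(N2)}, and \AX{(N3)}. By Proposition~\ref{prop:hierarchy}, the set system $\mathcal{H}(G,\sigma)$ is a hierarchy on $V(G)$, so by the 1-1 correspondence between hierarchies and phylogenetic rooted trees there is a tree $T$ with leaf set $V(G)$ and $\mathcal{H}(T)=\mathcal{H}(G,\sigma)$. I would then take the truncation map $u$ from Lemma~\ref{lem:n1n2n3-qBMG}, namely $u(x,r)=\rho_T$ when $N(x)\ne\emptyset$ and $u(x,r)=x$ when $N(x)=\emptyset$ for $r\ne\sigma(x)$ (which is a legitimate truncation map, since $u(x,\sigma(x))=x$ is imposed directly and $\rho_T$ lies on every root-to-leaf path). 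Lemma~\ref{lem:n1n2n3-qBMG} then gives $(G,\sigma)=\qbmg(T,\sigma,u)$, so $(G,\sigma)$ is a qBMG by Definition~\ref{quasibmgDef}. Since both directions are essentially citations of results already established, I do not expect any real obstacle here; the only point requiring a moment of care is checking that the \emph{only-if} argument covers the degenerate monochromatic case, which is excluded from the two-color hypothesis of Lemma~\ref{lem:n1n2n3_quasi} but included in the convention for 2-qBMGs, and this is handled by the remark that such graphs are edge-less.
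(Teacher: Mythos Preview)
Your proposal is correct and matches the paper's own argument exactly: the paper simply states that combining Lemma~\ref{lem:n1n2n3_quasi} and Lemma~\ref{lem:n1n2n3-qBMG} yields the theorem, and your write-up just unpacks that combination in slightly more detail. Your care about the monochromatic edge-case is appropriate given the paper's stated convention but does not constitute a different approach.
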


The axioms \AX{(N1)}, \AX{(N2)}, and \AX{(N3)} are independent of the
coloring $\sigma$. It is of interest, therefore, to consider digraphs
satisfying these three conditions without considering a coloring. If $G$ is
bipartite, then a proper 2-coloring $\sigma$ exists that turns
$(G,\sigma)$ into a qBMG. This begs the question of whether the axioms
already imply that $G$ is bipartite.  We give an affirmative answer in the
following theorem.

\begin{theorem}
  Let $G$ be a digraph satisfying \AX{(N1)} and \AX{(N2)}. Then $G$ is
  bipartite.
  \label{thm:bipartite}
\end{theorem}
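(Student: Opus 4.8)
I would prove this by contradiction, showing that the underlying undirected graph of $G$ has no odd cycle. Suppose it does, and fix a chordless odd cycle $C=v_0v_1\cdots v_{n-1}$ (for instance a shortest odd cycle: a chord would cut $C$ into two cycles whose lengths add up to $n+2$, so one of them would be a shorter odd cycle). Indices are taken modulo $n$, and since $C$ is chordless it is an induced subgraph, so $v_iv_j\notin E(G)$ and $v_jv_i\notin E(G)$ whenever $j\not\equiv i\pm1\pmod n$.

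The only thing I need from \AX{(N2)} is that a directed walk of odd length from $a$ to $b$ forces the arc $ab\in E(G)$: for length $3$ this is exactly \AX{(N2)} applied to $a\to p\to q\to b$, and a longer odd walk is shortened two steps at a time by replacing a length-$3$ initial segment with a single arc. In particular $G$ has no closed directed walk of odd length, since such a walk would give a loop; hence $G$ has no directed cycle of odd length. So it will suffice to show that, once its edges are oriented, $C$ is a directed cycle.

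Call an edge $\{v_i,v_{i+1}\}$ of $C$ \emph{forward} if $v_i\to v_{i+1}\in E(G)$ and \emph{backward} if $v_{i+1}\to v_i\in E(G)$; every edge of $C$ is forward or backward (and possibly both). From \AX{(N1)} I extract two propagation rules: \textbf{(A)} if $v_{i-1}\to v_i$ and $v_i\to v_{i+1}$ are arcs, then $v_{i+2}\to v_{i+1}$ is not an arc, so $\{v_{i+1},v_{i+2}\}$ is forward; and \textbf{(B)} symmetrically, if $v_{i+1}\to v_i$ and $v_i\to v_{i-1}$ are arcs, then $\{v_{i-2},v_{i-1}\}$ is backward. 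To see (A), apply \AX{(N1)} with $x=v_{i-1}$, $t=v_i$, $w=v_{i+1}$, $y=v_{i+2}$; the arcs $xt$, $tw$, $yw$ are all present, so \AX{(N1)} forces $x$ and $y$ to be distinct and adjacent. But in the induced cycle $C$ the vertices $v_{i-1}$ and $v_{i+2}$ are equal when $n=3$ and non-adjacent when $n\ge5$ (their cyclic distance on $C$ is $\min(3,n-3)\ge2$), a contradiction; hence $v_{i+2}\to v_{i+1}\notin E(G)$, and since $\{v_{i+1},v_{i+2}\}$ carries at least one arc it must be forward. Rule (B) follows in the same way from \AX{(N1)} with $x=v_{i+2}$, $t=v_{i+1}$, $w=v_i$, $y=v_{i-1}$. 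Now the numbers of forward and of backward edges of $C$ sum to at least $n$, so one of the two is at least $(n+1)/2$; since the edges of $C$ themselves form an $n$-cycle, whose independence number is $(n-1)/2$, two edges consecutive along $C$ must then have the same orientation. If $\{v_{i-1},v_i\}$ and $\{v_i,v_{i+1}\}$ are both forward, rule (A) propagates forwardness around all of $C$, making $v_0\to v_1\to\cdots\to v_{n-1}\to v_0$ a directed $n$-cycle; likewise two consecutive backward edges make $C$ a directed cycle in the opposite direction. Either way this contradicts the previous paragraph, so $G$ is bipartite.

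The part I expect to require the most care is the application of \AX{(N1)} inside rules (A) and (B): one must be precise about which vertices of the induced cycle $C$ are adjacent, so that the forced conclusion ``$v_{i-1}$ and $v_{i+2}$ distinct and adjacent'' is genuinely impossible for all odd $n\ge5$, and one must handle the triangle case $n=3$ separately through the $x=y$ clause of \AX{(N1)}. A minor additional check is that bidirectional edges on $C$ do not disrupt the propagation — they only help, since they count as both forward and backward, and rules (A) and (B) use only the presence of the relevant arcs.
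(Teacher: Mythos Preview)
Your proof is correct. Both your argument and the paper's reduce to an induced (chordless) odd cycle $C$ and then analyze the orientation pattern of its edges, and both ultimately rest on the same local observation: applying \AX{(N1)} to four consecutive vertices of $C$ shows that the pattern $\to\,\to\,\leftarrow$ is impossible. The organization of the endgame differs, however. The paper first uses \AX{(N2)} to exclude three consecutive arcs in the same direction (which would create a chord), then uses the \AX{(N1)} observation to exclude two consecutive same-direction arcs followed by an opposite one; together these force strict alternation of orientations around $C$, hence even length. You instead read the \AX{(N1)} observation as a propagation rule (two consecutive forward arcs force the next edge forward), use a pigeonhole/independence-number count on $C_n$ to locate two consecutive same-direction arcs, propagate to make $C$ a directed odd cycle, and finish with the global consequence of \AX{(N2)} that odd directed walks collapse to single arcs and hence to a loop. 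The ingredients are the same; your packaging is arguably a bit cleaner in that it avoids the separate ``three consecutive'' case and the minimal-counterexample framework, at the cost of the extra pigeonhole step.
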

\begin{proof}
  A directed graph $G$ is a \emph{minimal counterexample} to the assertion
  of the theorem if it is minimal among the induced subgraphs of
  non-bipartite digraph satisfying \AX{(N1)} and \AX{(N2)}. $G$ is not
  bipartite if and only if its underlying undirected graph contains an
  \emph{odd cycle}. In the setting of directed graphs, an odd cycle is a
  subgraph of $G$ with vertex set $C=\{x_1,x_2,\dots,x_m\}\subseteq V(G)$
  of odd cardinality $m=|C|$ such that $x_{i-1} x_i \in E(G)$ or
  $x_i x_{i-1} \in E(G)$ for $1\le i\le \ell$, where $x_0=x_m$.  Two
  vertices $x_i$ and $x_j$ are consecutive if $j=i\pm1$.  Since \AX{(N1)}
  and \AX{(N2)} are hereditary, in particular $G[C]$ satisfies \AX{(N1)}
  and \AX{(N2)}. A minimal counterexample $G$, therefore, contains only the
  vertices of the odd cycle but possibly additional edges.
  
  If $G$ contains a triangle $\{x,y,z\}$ then, w.l.o.g., there is a
  directed path $xy,yz$. Then $xz\in E(G)$, since otherwise $zx\in E(G)$
  must complete the triangle in which case $x\in N(N(N(x)))$, contradicting
  \AX{(N2)}. However, $xz\in E(G)$ implies $z\in N(x)\cap N(N(x))$, which
  contradicts \AX{(N1)}. Thus $G$ is triangle-free and $|C|\ge 5$. In
  particular, therefore, there are two distinct, non-consecutive vertex $x$
  and $y$.  Denote by $C_{xy}\subsetneq C$ and $C_{yx}\subsetneq C$ the
  vertex sets of the distinct ``underlying undirected'' paths connecting
  $x$ and $y$ in $C$. A pair of vertices $\{x_i,x_j\}$ is a \emph{chord} in
  $C$ if $x_i x_j\in E(G)$ and $x_i$ and $x_j$ are non-consecutive along
  $C$, i.e., $|i-j|\ne 1$ for $1\le i\le m$ with $x_l=x_0$. If $\{x,y\}$ is
  a chord in $C$, then both $G[C_{xy}]$ and $G[C_{yx}]$ are cycles and
  $|C_{xy}|,|C_{yx}|\ge 3$ since $x$ and $y$ are non-consecutive along
  $C$. Since $C_{xy}\cap C_{yx}=\{x,y\}$ we have $|C_{xy}|+|C_{yx}|=|C|+2$
  and thus $|C_{xy}|,|C_{yx}|\le |C|-1$. Therefore, $G[C]$ is composed of
  two shorter cycles, of which one is even, and the other is odd;  say
  $C_{xy}$ is odd. This odd cycle is shorter than $C$. Again, since
  \AX{(N1)} and \AX{(N2)} are hereditary, in particular $G[C_{xy}]$
  satisfies \AX{(N1)} and \AX{(N2)}, and thus $G[C_{xy}]$ is a
  counterexample contradicting minimality of $G[C]$.  Hence, a minimal
  counterexample $G$ is isomorphic to a chordless odd cycle $C$ that is not
  a triangle and thus comprises $|C|\ge 5$ vertices.
  
  Assume that the minimal counterexample $G$ contains a path with three
  consecutive edges with the same orientation, $x\rightarrow a \rightarrow
  b \rightarrow y$. Then $y\in N(N(N(x)))$ and \AX{(N2)} implies $y\in
  N(x)$.  If $x\ne y$, this implies that $G$ is either an even cycle or $G$
  has a chord $\{x,y\}$; a contradiction. If, on the other hand, $x=y$, we
  obtain a contradiction to $G$ being a simple graph.  Thus $G$ may contain
  at most two consecutive edges with the same orientation. If two such
  edges exist, then the next edge must have the opposite
  orientation. More precisely, there exists a (not necessarily induced)
  subgraph $x\rightarrow a \rightarrow z\leftarrow y$ in $G$. Hence, $z\in
  N(N(x))\cap N(y)$, and thus \AX{(N1)} implies $x\in N(y)$ or $y\in
  N(x)$. Therefore, $\{x,y\}$ is a chord whenever $|V(G)|\ge 5$, a
  contradiction to the observation above that a minimum counterexample is
  isomorphic to a chordless cycle $C$ with $|C|\ge 5$. Therefore, a minimal
  counterexample $G[C]$ cannot contain a pair of consecutive edges with the
  same orientation. Hence, any two consecutive edges must have alternating
  orientations. This implies that $|C|$ is even.  Therefore, no minimal
  counterexample exists.
\end{proof}

\begin{corollary}
  Let $G$ be a graph satisfying \AX{(N1)}, \AX{(N2)}, and \AX{(N3)}. Then
  $(G,\sigma)$ is a qBMG for every proper 2-coloring $\sigma$ of $G$.
  \label{cor:twocolor}
\end{corollary}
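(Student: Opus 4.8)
The plan is to obtain this as an immediate consequence of Theorem~\ref{thm:bipartite} and Theorem~\ref{thm:char2qBMG}; essentially all of the work has already been carried out in those two statements, so the proof is just a matter of assembling them. The one conceptual point to record is that the conditions \AX{(N1)}, \AX{(N2)}, and \AX{(N3)} are formulated entirely in terms of the out-neighborhood map $N$ and the induced independence relation of $G$, with no reference to any vertex coloring. Consequently, if $G$ satisfies these three conditions, then $(G,\sigma)$ satisfies them for \emph{every} map $\sigma$ on $V(G)$, and in particular for every proper $2$-coloring.

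Given that observation, I would fix an arbitrary proper $2$-coloring $\sigma$ of $G$. Then $(G,\sigma)$ is a properly $2$-colored graph satisfying \AX{(N1)}, \AX{(N2)}, and \AX{(N3)}, so Theorem~\ref{thm:char2qBMG} applies directly and yields that $(G,\sigma)$ is a qBMG. To be sure that the statement is not vacuous, i.e., that a proper $2$-coloring of $G$ exists at all, one invokes Theorem~\ref{thm:bipartite}: since $G$ already satisfies \AX{(N1)} and \AX{(N2)}, its underlying undirected graph is bipartite and hence admits a proper $2$-coloring (and indeed several, since the two colors may be swapped independently on each connected component, so there is genuine freedom in the choice of $\sigma$).

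There is no genuine obstacle in this argument. The only mild subtlety is the bookkeeping check that the hypothesis ``properly $2$-colored graph'' in Theorem~\ref{thm:char2qBMG} is met, which amounts exactly to $\sigma$ being a proper coloring using at most the two available colors --- precisely what was assumed of $\sigma$ --- together with the paper's convention that monochromatic edgeless graphs (and $K_1$) also count as $2$-qBMGs, which covers the degenerate cases.
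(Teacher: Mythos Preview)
Your argument is correct and in fact cleaner than the paper's. Both proofs rest on Theorem~\ref{thm:char2qBMG}, but the paper takes a more roundabout route: it decomposes $G$ into connected components, invokes heredity of \AX{(N1)}--\AX{(N3)} to apply Theorem~\ref{thm:char2qBMG} to each component with some $2$-coloring, then argues separately that swapping the two colors on a component preserves the qBMG property (by swapping the roles of the colors in the truncation map), and finally reassembles the components via Observation~\ref{fact:components}. Your observation that \AX{(N1)}, \AX{(N2)}, and \AX{(N3)} are formulated purely in terms of $N$ and independence, and hence hold for $(G,\sigma)$ regardless of which proper $2$-coloring $\sigma$ is chosen, makes all of that machinery unnecessary: Theorem~\ref{thm:char2qBMG} applies directly to the whole graph with the given coloring. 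The paper's detour does have the minor expository benefit of exhibiting an explicit explaining tree assembled from component trees, but logically your one-line reduction is all that is needed.
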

\begin{proof}
  Consider the decomposition $G=\bigcupdot_i G_i$ of $G$ into its connected
  components. Theorem~\ref{thm:bipartite} implies that $G$ and thus each of
  its connected components $G_i$ are bipartite. Let $\sigma_i$ be a
  2-coloring of $G_i$. Furthermore, by heredity, the $G_i$ satisfies
  \AX{(N1)}, \AX{(N2)}, and \AX{(N3)}. This together with
  Theorem~\ref{thm:char2qBMG} implies that $(G_i,\sigma_i)$ is a 2-qBMG and
  thus, has an explanation $G_i(T_i,\sigma_i,u_i)$. Now exchange color $r$
  and $s$ in $G_i$, and set $u'_i(x,r)\coloneqq u_i(x,s)$ and
  $u'_i(x,s)\coloneqq u_i(x,r)$ for all $x\in V(G)$. Clearly,
  $G_i(T_i,\sigma'_i,u'_i)$ is an explanation for $(G_i,\sigma'_i)$. As a
  consequence of Observation~\ref{fact:components} there is an explaining
  tree $(T,\sigma,u)$ for every proper 2-coloring of $G$.
\end{proof}

Recall that color-sink-freeness of properly 2-colored digraphs is
equivalent to claiming that they are sink-free.  Another way of expressing
that 2-BMGs are the (color-)sink-free 2-qBMGs is to say that 2-BMGs
are the 2-qBMGs that have an explanation $(T,\sigma,u)$ with
$u(x,s)=\rho_T$ for all $x\in L$ and $s\ne\sigma(x)$, see
Theorem~\ref{thm:NewChar}. This begs the question of whether there are
interesting subclasses of 2-qBMGs that are more general than
2-BMGs.  In practical applications, it is plausible to assume that
sequence similarities can reliably identify homologs, at least within some
range of evolutionary divergence.  This suggests restricting the
truncation map $u$ to be bounded away from the leaves.  In the following,
we will investigate in which cases a 2-qBMG can be explained by a
leaf-colored tree $(T,\sigma,u)$ with truncation map $u$ that satisfies
\begin{itemize}
  \item[\AX{(M)}] $z\prec_T u(z,r)$ for $r\ne\sigma(z)$ and all $z\in V(G)$.
\end{itemize}
In other words, a truncation map $u$ satisfies \AX{(M)} if and only if the
statement ``$u(x,s)=x$ if and only if $s=\sigma(x)$'' is satisfied.  We start 
with the
following simple technical result.
\begin{lemma}
  \label{lem:monochrome-subtree}
  Let $(G,\sigma)$ be a qBMG explained by $(T,\sigma,u)$ and $v\in V(T)$
  such that $|\sigma(L(T(v)))|=1$. Then $N^-(x)=N^-(y)$ holds for all
  $x,y\in L(T(v))$.
\end{lemma}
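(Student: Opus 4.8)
The plan is to fix two leaves $x,y\in L(T(v))$, take an arbitrary $w\in N^-(x)$ in $G=\qbmg(T,\sigma,u)$, and show $w\in N^-(y)$; by the symmetric roles of $x$ and $y$ this yields $N^-(x)=N^-(y)$. Write $c$ for the unique color in $\sigma(L(T(v)))$, so that $\sigma(x)=\sigma(y)=c$. The case $x=y$ is trivial, so assume $x\neq y$; then $v$ is an inner vertex and $x,y\prec_T v$. Since $(G,\sigma)$ is properly colored and $wx\in E(G)$, we have $\sigma(w)\neq c$, hence $w\notin L(T(v))$.

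The key preliminary step is a purely tree-theoretic identity: for every leaf $w\notin L(T(v))$ one has $\lca_T(w,x)=\lca_T(w,v)=\lca_T(w,y)$, and this common vertex lies strictly above $v$. To see this, observe that $\lca_T(w,x)$ and $v$ are both ancestors of $x$ and hence comparable; if $\lca_T(w,x)\preceq_T v$, then $w\preceq_T\lca_T(w,x)\preceq_T v$ would put $w\in L(T(v))$, a contradiction, so $\lca_T(w,x)\succ_T v$. Consequently $\lca_T(w,x)$ is a common ancestor of $w$ and $v$, whereas $\lca_T(w,v)$ is a common ancestor of $w$ and $x$ (because $x\prec_T v$); comparing the two least common ancestors forces $\lca_T(w,x)=\lca_T(w,v)$, and the same argument with $y$ in place of $x$ gives $\lca_T(w,v)=\lca_T(w,y)$.

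Finally I would unwind $wx\in E(G)$ via Definition~\ref{def:qbm}: $x$ is a best match of $w$ in $(T,\sigma)$, and $\lca_T(w,x)\preceq_T u(w,c)$. Condition~(ii) for the pair $wy$ then holds for free, since $\lca_T(w,y)=\lca_T(w,x)\preceq_T u(w,c)$. For condition~(i): if $y$ were not a best match of $w$, there would be a leaf $y'$ of color $c$ with $\lca_T(w,y')\prec_T\lca_T(w,y)=\lca_T(w,x)$, contradicting that $x$ is a best match of $w$; hence $y$ is a best match of $w$ and $wy\in E(G)$, i.e.\ $w\in N^-(y)$. Exchanging the roles of $x$ and $y$ gives $N^-(x)=N^-(y)$. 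I do not anticipate a genuine obstacle here; the only points requiring care are the degenerate case $x=y$ (and the implicit nonemptiness of $L(T(v))$, which follows from $|\sigma(L(T(v)))|=1$) and the bookkeeping of the three $\lca$-equalities in the middle step, after which everything is a direct reading of Definition~\ref{def:qbm}.
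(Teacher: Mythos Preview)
Your proof is correct and follows essentially the same approach as the paper: pick $w\in N^-(x)$, use $\sigma(w)\neq c$ to place $w$ outside $L(T(v))$, deduce $\lca_T(w,x)=\lca_T(w,y)\succ_T v$, and then transfer both conditions of Definition~\ref{def:qbm} from $x$ to $y$. Your version is in fact more explicit than the paper's, which asserts the $\lca$ equality without justification and glosses over verifying the best-match condition~(i) for $y$.
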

\begin{proof}
  Let $x,y\in L(T(v))$. Since $|\sigma(L(T(v)))|=1$, we have
  $\sigma(x)=\sigma(y)$.  Assume there is a vertex $q\in N^-(x)$. Then, we
  have $\sigma(x)\ne \sigma(q)$ and thus $q\notin L(T(v))$.  Hence,
  $v\prec_T \lca_{T}(x,q)=\lca_{T}(y,q)$. Set $w=\lca_{T}(y,q)$.  Now
  $q\in N^-(x)$ implies $w\preceq_T u(q,\sigma(x))$. Since
  $\sigma(x)=\sigma(y)$ we have $u(q,\sigma(x))=u(q,\sigma(y))$ and thus,
  $q\in N^-(y)$. By the same arguments, $q\in N^-(y)$ implies
  $q\in N^-(x)$.  Therefore, we obtain $N^-(x)=N^-(y)$.
\end{proof}

\begin{definition}
  A graph $(G,\sigma)$ satisfies condition \AX{(N4)} if, for all
  $x\in V(G)$ with $N(x)=\emptyset$, there is $y\in V(G)\setminus\{x\}$
  such that $\sigma(y)=\sigma(x)$, and $N^-(x)=N^-(y)$.
\end{definition}

\begin{proposition}
  \label{prop:N4-no-leaf-truncation}
  Let $(G,\sigma)$ be a 2-qBMG. Then the following two statements are
  equivalent.
  \begin{itemize}\setlength{\itemsep}{0pt}
    \item[(i)] $(G,\sigma)$ is explained by a tree $T$ and a truncation map
    $u$ on $T$ satisfying \AX{(M)}.
    \item[(ii)] $(G,\sigma)$ satisfies condition \AX{(N4)}.
  \end{itemize}
\end{proposition}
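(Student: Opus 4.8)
I would prove the two implications separately, with \textrm{(i)}$\Rightarrow$\textrm{(ii)} the easy direction and \textrm{(ii)}$\Rightarrow$\textrm{(i)} the substantial one; the degenerate graph $K_1$ is noted once (it satisfies neither statement, since \AX{(M)} is impossible when the single leaf has no vertex above it).

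\textbf{The direction \textrm{(i)}$\Rightarrow$\textrm{(ii)}.} Assume $(T,\sigma,u)$ explains $(G,\sigma)$ with $u$ satisfying \AX{(M)}, let $x$ be a vertex with $N_G(x)=\emptyset$, write $\sigma(x)=s$, and let $r$ be the other color. If $\sigma(L)=\{s\}$ then $G$ is edgeless and \AX{(M)} forces $|L(T)|\ge 2$, so \AX{(N4)} holds trivially; hence assume both colors occur. Then $x$ has a best match $z_0$ of color $r$ in $(T,\sigma)$, and since $x$ has no $r$-colored out-neighbor in $\qbmg(T,\sigma,u)=G$ while $z_0$ is a best match, condition (ii) of Definition~\ref{def:qbm} must fail for $z_0$, i.e.\ $u(x,r)\prec_T\lca_T(x,z_0)$. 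Together with $x\prec_T u(x,r)$ from \AX{(M)} this gives $\parent_T(x)\preceq_T u(x,r)\prec_T\lca_T(x,z_0)$, so every $r$-colored leaf $z'$ satisfies $\lca_T(x,z')\succeq_T\lca_T(x,z_0)\succ_T\parent_T(x)$ and thus lies outside $L(T(\parent_T(x)))$. Hence $L(T(\parent_T(x)))$ is monochromatic of color $s$; as $T$ is phylogenetic, $\parent_T(x)$ has at least two leaf-descendants, and Lemma~\ref{lem:monochrome-subtree} gives $N^-_G(y)=N^-_G(x)$ for any $y\in L(T(\parent_T(x)))\setminus\{x\}$, which is the witness required by \AX{(N4)}.

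\textbf{The direction \textrm{(ii)}$\Rightarrow$\textrm{(i)}: reduction.} The plan is to build a leaf-colored tree $(T,\sigma)$ with leaf set $V(G)$ such that (a) $\bmg(T,\sigma)$ is associated with $(G,\sigma)$ and (b) $L(T(\parent_T(x)))$ is monochromatic for every $G$-sink $x$. Given such a tree, set $u(x,r)\coloneqq\rho_T$ if $N_G(x,r)\neq\emptyset$, $u(x,r)\coloneqq\parent_T(x)$ if $N_G(x,r)=\emptyset$, and $u(x,\sigma(x))\coloneqq x$. Then \AX{(M)} holds, and $\qbmg(T,\sigma,u)=(G,\sigma)$: for vertices that are not $G$-sinks the ``associated'' condition and Lemma~\ref{2out_neighbour} give the out-neighborhoods exactly, while for a $G$-sink $x$ property (b) ensures every best match of $x$ sits strictly above $\parent_T(x)=u(x,r)$ and is therefore truncated away. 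By Theorem~\ref{thm:char2qBMG}, $(G,\sigma)$ satisfies \AX{(N1)}--\AX{(N3)}, so for sink-free $G$ this is immediate: such $G$ is a BMG by Theorem~\ref{thm:NewChar}, explained by a tree with the trivial truncation map, which satisfies \AX{(M)} once $|V(G)|\ge 2$ (automatic here).

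\textbf{The direction \textrm{(ii)}$\Rightarrow$\textrm{(i)}: induction and main obstacle.} I would now induct on the number of $G$-sinks. The key observation is that each $G$-sink's thinness class $\theta_G(x)=\{y:\sigma(y)=\sigma(x),\ N^-_G(y)=N^-_G(x)\}$ has at least two elements by \AX{(N4)}, and every in-neighbor of $x$ is an in-neighbor of all of $\theta_G(x)$; hence deleting a single sink $x$ never creates a new sink, so $G-x$ is again a $2$-qBMG (Corollary~\ref{cor:hereditary}), and the only way \AX{(N4)} can be lost is when $\theta_G(x)=\{x,x'\}$ with $x'$ also a sink. As long as some sink lies outside all such ``sink pairs'', delete it, apply induction to $G-x$, and graft $x$ back as a new sibling of one of its thinness-partners: a non-sink partner survives unchanged in $G-x$, and a sink partner has a monochromatic parent subtree by the inductive tree. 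One checks that this local surgery preserves \AX{(N1)}--\AX{(N3)} — equivalently, agreement with $(\mathscr{R}(G,\sigma),\mathscr{F}(G,\sigma))$ — because every triple in $\mathscr{R}$ or $\mathscr{F}$ that involves $x$ corresponds to one involving a thinness-partner of $x$, and the only $\preceq$-relations altered by the surgery are those that are forced anyway. When all sinks sit in sink-pair classes one peels such a class $C$ off as a whole: if $N^-_G(C)=\emptyset$ then $C$ is a pair of isolated leaves, inserted as a cherry below a fresh root placed above the tree for $G-C$ (so that these two leaves never become anyone's best match). I expect the last case — $C$ a sink pair with $N^-_G(C)\neq\emptyset$, where deleting $C$ can leave a pendant sink with no \AX{(N4)}-partner — to be the main obstacle: here $C$ must be inserted directly, as a monochromatic cherry at the internal vertex that collects the common best-match ancestor of the vertices feeding into $C$. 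This is where I would use \AX{(N3)} (the out-neighborhoods of the vertices pointing into $C$ are nested, so they all see $C$ at a single tree-level) together with the reachable-set hierarchy of Lemma~\ref{lem:ext-R-hierarchy} and Proposition~\ref{prop:hierarchy}, to guarantee that these vertices acquire exactly $C$, and nothing spurious, as additional best matches.
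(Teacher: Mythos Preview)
Your direction (i)$\Rightarrow$(ii) is correct and essentially the paper's argument: you localize the monochromatic subtree at $\parent_T(x)$, the paper at $u(x,r)\succeq_T\parent_T(x)$, but the conclusion via Lemma~\ref{lem:monochrome-subtree} is identical.

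For (ii)$\Rightarrow$(i) you take a genuinely different route from the paper, and the obstacle you flag is real and unresolved. Your induction is on the number of sinks of $G$; you delete a sink (or a sink pair) and recurse on the smaller graph. The difficulty is exactly that \AX{(N4)} is not hereditary (the paper notes this explicitly right after the proposition). In your final case --- a sink pair $C=\{x,x'\}$ with $N^-_G(C)\neq\emptyset$ --- deleting $C$ may turn a vertex $v$ with $N_G(v)=C$ into a new sink with no \AX{(N4)}-witness, so the inductive hypothesis is unavailable. Your proposed remedy (``insert $C$ directly as a monochromatic cherry at the internal vertex that collects the common best-match ancestor of the vertices feeding into $C$'') is not a proof: you would need to exhibit a tree for all of $G$ without recursion, and the hierarchy tree of Proposition~\ref{prop:hierarchy} does not in general have monochromatic parent-subtrees at sinks, so the appeal to Lemma~\ref{lem:ext-R-hierarchy} does not close the gap.

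The paper sidesteps this entirely by never touching $G$. It starts from \emph{any} explanation $(T_1,\sigma,u_1)$ of $(G,\sigma)$ and produces a finite sequence $(T_i,\sigma,u_i)$, each still explaining $(G,\sigma)$, in which the number of leaves $x$ with $u_i(x,r)=x$ strictly decreases. Given such a leaf $x$, \AX{(N4)} supplies a witness $y$; if $\parent_{T_i}(x)$ already has a monochromatic leaf set one simply raises $u(x,r)$ to that parent, and otherwise one detaches $x$ from $T_i$ and re-attaches it as a new sibling of $y$ (your ``grafting'' move, but applied to the tree rather than obtained via induction on a smaller graph). The verification that the modified tree still explains $(G,\sigma)$ and that no other truncation vertex has been pushed down to a leaf is a direct case check. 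Because $G$ is fixed throughout, \AX{(N4)} is available at every step, and the heredity problem never arises.
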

\begin{proof}
  Set $L=V(G)$ and let $\sigma:L\to S$ be a proper 2-coloring of
  $G$. Suppose first that $(G,\sigma)$ can be explained by $(T,\sigma,u)$
  satisfying \AX{(M)}. Define $x_u\coloneqq u(x,s)$ to be the truncation
  vertex of $x$ for the color $s\ne\sigma(x)$ in $T$, and consider the
  subtree $T(x_u)$.  By the definition of the truncation maps, we have
  $x\preceq_T x_u$ and thus, $x\in L(T(x_u))$.  Suppose
  $|\sigma(L(T(x_u)))| = 2$. Hence, there is at least one $z\in L(T(x_u))$
  of color $\sigma(z)\neq \sigma(x)$ such that $z$ is a best match of $x$
  and $\lca(x,z)\preceq x_u$. Hence, $z\in N(x)$; contradicting the
  assumption that $N(x)=\emptyset$. Hence, $|\sigma(L(T(u_x)))|=1$.  Since
  $x_u$ is not a leaf (because $(T,\sigma,u)$ satisfies \AX{(M)}) and since
  $T$ is phylogenetic, there is a leaf $y\in L(T(x_u))$ with $y\ne x$ and
  $|\sigma(L(T(u_x)))|=1$ implies $\sigma(x)=\sigma(y)$. Together with
  Lemma~\ref{lem:monochrome-subtree} this yields $N^-(x)=N^-(y)$. We
  emphasize that $N^-(x)=N^-(y)=\emptyset$ is still possible.
  
  For the converse, assume that the 2-qBMG $(G,\sigma)$ satisfies
  \AX{(N4)}. By assumption, $|\sigma(L)|=2$ and thus $(G,\sigma)$ contains
  at least one vertex for each of two distinct colors and at least two
  vertices.  As a consequence of the latter, every leaf of an
  explaining tree has a parent.  We will prove the statement by
  constructing a finite sequence of trees $(T_i,\sigma,u_i)$ with
  $1\leq i \leq k$, each of which explains $(G,\sigma)$ and such that such
  that $(T_i,\sigma,u_i)$ differs from its predecessor and successor, and
  $(T_k,\sigma,u_k)$ satisfies the desired property \AX{(M)}.
  
  Since $(G,\sigma)$ is a qBMG, there is always a tree explaining
  $(G,\sigma)$. Denote this tree by $(T_1,\sigma,u_1)$. If
  $(T_1,\sigma,u_1)$ satisfies \AX{(M)}, $k=1$ and the proposition
  follows. Otherwise, we construct $(T_2,\sigma,u_2)$ in the following way:
  Since $(T_1,\sigma,u_1)$ does not satisfy \AX{(M)}, there is at least one
  vertex $x\in L$ such that $x_{u_1}=x$ for the color $r\ne \sigma(x)$ and
  thus $N(x)=\emptyset$. Therefore, \AX{(N4)} ensures that there is a
  vertex $y\in L'\coloneqq L\setminus\{x\}$ such that $\sigma(y)=\sigma(x)$
  and $N^-(x)=N^-(y)$. Set $w=\parent_{T_1}(x)$.  We distinguish two cases:
  (a) $\sigma(L(T_1(w)))=\{\sigma(x)\}$ and (b)
  $\{\sigma(x)\}\subsetneq \sigma(L(T_1(w)))$.
  
  In Case~(a) set $T_2=T_1$, $x_{u_2}=w$ and $z_{u_2}=z_{u_1}$ for all
  $z\in V(G)\setminus\{x\}$, and put $(G_2, \sigma)=\qbmg(T_2,\sigma,u_2)$.
  We observe that $x$ is still a sink in $G_2$.  By construction, the
  out-neighbors of all $z\in V(G)\setminus\{x\}$ also have not changed.
  Therefore, $\qbmg(T_2,\sigma,u_2)=(G,\sigma)$.
  
  In Case~(b) $|L|\ge 3$ and thus $|L'|\ge 2$ because $G$ contains vertices
  $x$ and $y$ and at least one vertex of the opposite color.  In order to
  construct $(T_2,\sigma,u_2)$, we first restrict the tree $T_1$ to $L'$ say
  $T'$, and then obtain $T_2$ from $T'$ by splitting the edge
  $\parent_{T'}(y) y$, i.e., we replace the edge by a newly-created vertex
  $p$ together with the edges $\parent_{T'}(y) p$ and $py$, and attaching
  $x$ as the second child of $p$. Recall from the proof of
  Lemma~\ref{lem:delvert} that $w'\in V(T_1)\setminus V(T')=V(T_1)\setminus
  (V(T_2)\setminus\{x\})$ is only possible if either $w'=x$ or $w'=w$ and
  $\child_{T_1}(w)=\{x,v^*\}$.  Now take $z_{u_2}=z_{u_1}$ if $z_{u_1}\in
  V(T_2)$ (that is if either $z_{u_1}\ne w$ or $w$ has more than two
  children in $T_1$) and $z_{u_2}=v^*$ otherwise. Finally, set $x_u=p$.
  Our notation identifies corresponding vertices of $T_1$ and
  $T_2$. Moreover, observe that the ancestor order $\preceq_{T_2}$ is
  preserved with respect to $\preceq_{T_1}$ restricted to
  $V(T_2)\setminus \{p\}$. Now we show that $(T_2,\sigma,u_2)$ also explains
  $(G,\sigma)$. Denote by $(G',\sigma)$ the digraph explained by
  $(T_2,\sigma,u_2)$. Since $\sigma(x)=\sigma(y)$, we have by construction
  $|\sigma(L(T_2(p)))|=1$. Together with Lemma~\ref{lem:monochrome-subtree},
  this implies $N^-_{G'}(x)=N^-_{G'}(y)$. Moreover,
  $N_{G'}(x)=\emptyset=N_{G}(x)$ follows from $x_u=p$. Therefore, we may
  restrict ourselves to vertices $x',y'\in L'$ with
  $\sigma(x')\ne\sigma(y')$.  Observe that $\lca_{T_1}(x',y')\notin V(T_2)$
  implies that $\lca_{T_1}(x',y')=w$ and that $w$ has only two children
  $\{x,v^*\}$, which yields $x\in\{x',y'\}$; a contradiction.  Thus,
  $\lca_{T_1}(x',y')\in V(T_2)$.
  
  Now suppose that $y'\in N_{G}(x')$.  Therefore,
  $\lca_{T_1}(x',y')\preceq_{T_1} x'_{u_1}$.  If $x'_{u_1}\in V(T_2)$, we
  have $x'_{u_2}=x'_{u_1}$. Otherwise, $x'_{u_1}=w$ and
  $\child_{T_1}(w)=\{x,v^*\}$. This implies
  $\lca_{T_1}(x',y')\preceq_{T_1} v^* \prec_{T_1} w$ because
  $\lca_{T_1}(x',y')=w$ occurs only if $x\in\{x',y'\}$. Therefore,
  $\lca_{T_2}(x',y') \preceq_{T_2} x'_{u_2}$ in both cases. Now assume, for
  contradiction, that $y'\notin N_{G'}(x')$. Since
  $\lca_{T_2}(x',y') \preceq_{T_2} x'_{u_2}$, this implies that there is
  $y''\in L\cap N_{G'}(x')$ of color $\sigma(y'')=\sigma(y')$ such that
  $\lca_{T_2}(x',y'')\prec_{T_2}\lca_{T_2}(x',y')$. In this case, we must
  have $y''=x$; otherwise $\lca_{T_2}(x',y'')\in V(T_1)$ and
  $\lca_{T_2}(x',y')\in V(T_1)$ implies
  $\lca_{T_1}(x',y'')\prec_{T_1}\lca_{T_1}(x',y')$, contradicting
  $y'\in N_{G}(x')$. Now $x=y''\in N_{G'}(x')$ and
  $N^-_{G'}(x)=N^-_{G'}(y)$ imply $y\in N_{G'}(x')$ and thus
  $\lca_{T_2}(x',y)=\lca_{T_2}(x',x)\prec_{T_2}\lca_{T_2}(x',y')$. Since
  $x', y\in L'$, $\lca_{T_2}(x',y)\in V(T_1)$ and thus
  $\lca_{T_1}(x',y)\prec_{T_1}\lca_{T_1}(x',y')$, which together with
  $\sigma(y)=\sigma(x)=\sigma(y')$ contradicts $y'\in N_{G}(x')$.  Hence,
  $y'\in N_{G}(x')$.
  
  Now assume $y'\notin N_{G}(x')$. Thus either
  $x'_{u_1}\prec_{T_1} \lca_{T_1}(x',y')$ or there is a $y''\in L$ with
  $\sigma(y'')=\sigma(y')$ such that
  $\lca_{T_1}(x',y'')\prec_{T_1} \lca_{T_1}(x',y')$. Suppose
  $x'_{u_1}\prec_{T_1} \lca_{T_1}(x',y')$. Then either
  $x'_{u_1},\lca_{T_1}(x',y')\in V(T_2)$, or $x'_{u_1}\not\in V(T_2)$. \ In
  the first case, $x'_{u_2}=x'_{u_1}\prec_{T_2}\lca_{T_2}(x',y')$ implies
  $y'\notin N_{G'}(x')$.  Alternatively,
  $x'_{u_2}=v^*\prec_{T_1} w=x'_{u_1}\prec_{T_1} \lca_{T_1}(x',y')$.  Since
  $v^*, \lca_{T_1}(x',y')\in V(T_2)$, we obtain
  $x'_{u_2}\prec_{T_2}\lca_{T_2}(x',y')$, and thus, $y'\notin
  N_{G'}(x')$. Finally, assume there is $y''\in N_{G}(x')$ with
  $y''\neq y'$.  If $y''\ne x$, then
  $\lca_{T_1}(x',y''),\lca_{T_1}(x',y')\in V(T_2)$ implies
  $\lca_{T_2}(x',y'')\prec_{T_2} \lca_{T_2}(x',y')$, and thus
  $y'\notin N_{G'}(x')$. On the other hand, if $y''=x$, then
  $N^-_{G}(x)=N^-_{G}(y)$ implies $y\in N_{G}(x')$, and thus
  $\lca_{T_2}(x',y)=\lca_{T_2}(x',y'')$. Since
  $\lca_{T_2}(x',y)\in V(T_2)$, we obtain
  $\lca_{T_2}(x',y)=\lca_{T_2}(x',y'')\prec_{T_2} \lca_{T_2}(x',y')$ which,
  together with $\sigma(y)=\sigma(x)$, yields $y'\notin N_{G'}(x')$.
  
  In summary, we have shown that $(G,\sigma)=(G',\sigma)$, and hence
  $(T_2,\sigma,u_2)$ also explains $(G,\sigma)$.
  
  e show that, for every $z\in L$, $z_{u_2}$ cannot be a leaf whenever
  $z_{u_1}$ is not a leaf.  Observe that $z\prec_{T_1} z_{u_1}$ implies
  $z\ne x$. If $z_{u_1}\in V(T_2)$, then $z_{u_2}=z_{u_1}$ and $z_{u_2}$ is
  clearly still an inner vertex. On the other hand, $z_{u_1}\notin V(T_2)$
  ensures $z_{u_1}=w$ and $\child_{T_1}(w)=\{x,v^*\}$. Since $z\ne x$ we
  have $z\preceq_{T_1} v^*$. If $\sigma(z)=\sigma(x)$, then
  $\{\sigma(x)\}\neq \sigma(L(T_1(w)))$ implies that there exists
  $z'\in L(T_1(v^*))$ such that $\sigma(z)\ne\sigma(z')$ and thus
  $z\neq z'$. Hence, $v^*=z_{u_2}$ is not a leaf.  On the other hand, if
  $\sigma(z)\ne\sigma(x)$, then there is some $x''\preceq_{T_1} v^*$ with
  $\sigma(x'')=\sigma(x)$ implying that $v^*$ is an inner vertex or, if
  not, $x\in N_{G}(z)$ as $\lca_{T_1}(z,x)=w=x_{u_1}$. In the latter case,
  $N^-_{G}(x)=N^-_{G}(y)$ yields $y\in N_{G}(z)$, and thus
  $w=\lca_{T_1}(z,x)=\lca_{T_1}(z,y)$. However, $x\ne y$ and
  $w=\parent_{T_1}(x)$ imply that there is
  $v^{**}\in \child_{T_1}(w)\setminus\{x,v^*\}$; a contradiction.  Hence,
  the case $z_{u_1}\notin V(T_2)$ and $\sigma(z)\ne\sigma(x)$ cannot occur.
  
  In both Cases~(a) and~(b), therefore, $(T_2,\sigma,u_2)$ explains
  $(G,\sigma)$ and $z_{u_2}$ can only be a leaf if $z_{u_1}$ is a
  leaf. Furthermore, $x_{u_2}$ is not a leaf, while $x_{u_1}=x$, i.e., a
  leaf. Therefore we have decreased the number of truncation vertices that
  are leaves.  Since the number of leaves is finite, we can repeat this
  procedure and eventually obtain a tree $(T_k,\sigma,u_k)$ for which
  $z_{u_k}\notin L$ for all leaves $z$. Thus the tree $(T_k,\sigma,u_k)$
  satisfies property~\AX{(M)}.
\end{proof}

As a simple example, consider the digraph of two vertices with a single
edge. It satisfies \AX{(N1)}, \AX{(N2)}, and \AX{(N3)}, and thus is a qBMG,
but it violates condition \AX{(N4)} and thus has no explanation without a
truncation at a leaf. Proposition~\ref{prop:N4-no-leaf-truncation} implies
that, for a 2-qBMG $(G,\sigma)$ that satisfies \AX{(N4)}, we always find an
explaining leaf-colored tree with truncation map $(T,\sigma,u)$ that
satisfies \AX{(M)}. However, a 2-qBMG $(G,\sigma)$ satisfying \AX{(N4)} may
also have explanations that do not satisfy \AX{(N4)}, as shown by the
example in Figure~\ref{fig:N4-non-hereditary}.

\begin{figure}[tb]
  \centering
  \includegraphics[width=0.85\linewidth]{./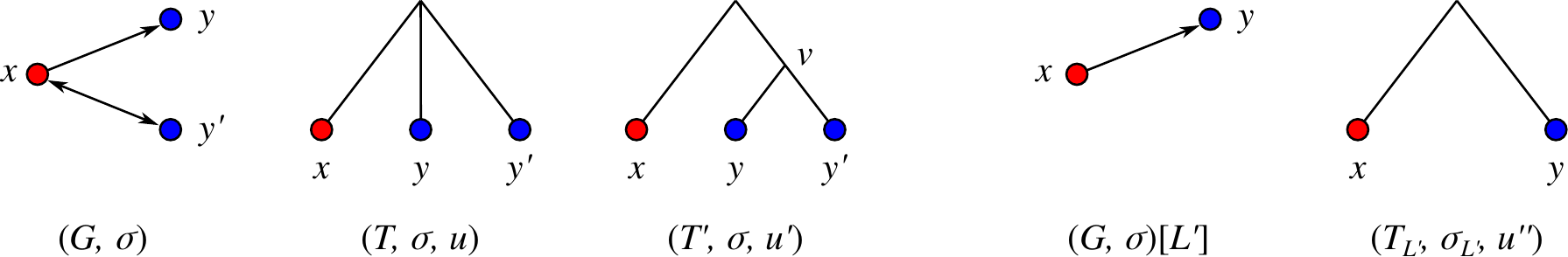}
  \caption{The qBMG $(G,\sigma)$ can be explained by both $(T,\sigma, u)$
    and $(T',\sigma,u')$ for suitable choices of $u$ and $u'$,
    respectively.  However, since the root is the only non-leaf vertex in
    $T$, there is no explanation $(T',\sigma,u')$ that satisfies
    \AX{(M)}. The tree $T'$ has an additional inner vertex $v$. Therefore,
    we can set $u'(y,\sigma(x))=v$.\hfill\break The induced subgraph
    $(G,\sigma)[L']$ with $L=\{x,y\}$ has a sink $y$ but no
    $y'\in L'\setminus\{x\}$ such that $\sigma(y)=\sigma(y')$, and
    $N^-(y)=N^-(y')$. Hence, Property \AX{(N4)} is not hereditary.}
  \label{fig:N4-non-hereditary}
\end{figure}

Moreover, like sink-freeness, Property \AX{(N4)} is not
hereditary. To see this, let $x$ be a sink and suppose that
$\hat x \in L'\coloneqq L\setminus \{x\}$ is the only vertex with
$\sigma(x)=\sigma(\hat x)$ and $N^-(\hat x) =N^-(x)$. Then
$(G, \sigma)[L']$ is still a qBMG Lemma~\ref{cor:hereditary}.
We have $\hat x\notin N^{-}(x')$ for any $x'\in L'$ of color $\sigma(x)$
because of $\sigma(x)=\sigma(\hat x)$, and thus $N^-(x')$ remains unchanged
by deleting $\hat x$.  Hence, $(G, \sigma)[L']$ does not satisfy
\AX{(N4)}.

\section{Concluding Remarks and Open Questions}

In this contribution, we investigated a generalization of best match graphs
of~\cite{Geiss:19a}, which we termed qBMGs.  In the two-colored case, which
in particular occurs in the form of the subgraph induced by two vertex
colors, qBMGs are characterized by three simple conditions, \AX{(N1)},
\AX{(N2)}, and \AX{(N3)}. The first two conditions are already sufficient
to ensure that the graph is bipartite. In the general case, qBMGs are
characterized by their induced subgraphs on three vertices with two colors,
which translate to sets of ``informative'' and ``forbidden'' triples. It is,
therefore, possible to recognize $\ell$-colored qBMGs in polynomial time. In
the positive case, an explaining tree and a corresponding truncation map
can also be constructed in polynomial time.

In contrast to BMGs, being a qBMG is a hereditary property. On the other
hand, BMGs have unique least resolved trees (LRTs), a property that is no
longer true for qBMGs in general.  It will be interesting to ask how
different alternative LRTs can become. Since they must display the
informative triples, one would expect that there is some common
``core''. The uniqueness of LRTs on BMGs suggests considering (maximal)
induced subgraphs that are BMGs.  Are their LRTs displayed by all LRTs of
the qBMG? Is there an efficient algorithm to find all maximal induced BMGs
in a qBMG?

From an application point of view, the truncation map $u(x,r)$ describes
the phylogenetic scope within which a homolog $y$ of the query gene $x$ can
be found. If the target genomes have similar size and organization, it
becomes a reasonable approximation to assume that phylogenetic scope
depends only on the dissimilarity between query and target gene but not
on the identity of the target genome.  In this case, the truncation map
becomes independent of the color, i.e., we have $u(x,r)=u(x)$ for all
$r\ne\sigma(x)$. Assuming further that all genes have similar size and
internal structure further restricts $u$ to ``cutting'' the tree at a
certain height. It remains an open question whether these
subclasses of qBMGs also have interesting mathematical properties.
In the special case of two colors, a simple characterization was obtained
for qBMGs that can be explained by truncation maps that exclude the leaves.
Quasi-best match graphs that can be explained by a tree $(T,\sigma,u)$
satisfying \AX{(M)} are also of interest for more than two colors.

In Corollary~\ref{cor:twocolor} we characterized the digraphs $G$ that admit a
2-coloring $\sigma$ such that $(G,\sigma)$ is a 2-qBMG. Naturally, one
might want to ask which directed graphs admit an $\ell$-coloring
such that $(G,\sigma)$ is an $\ell$-qBMG.  On the other extreme, setting
$S=V(G)$ and using coloring $\sigma(x)=x$, we can use the rooted star tree
with truncation map $u(x,\sigma(y))=\rho$ if $xy\in E(G)$ and 
$u(x,\sigma(y))=x$ if
$xy\notin E(G)$ as explanation for $(G,\sigma)$. Thus every directed graph
$G$ can be colored to be a $|V(G)|$-qBMG.

\begin{figure}[tb]
  \centering
  \includegraphics[width=0.75\linewidth]{./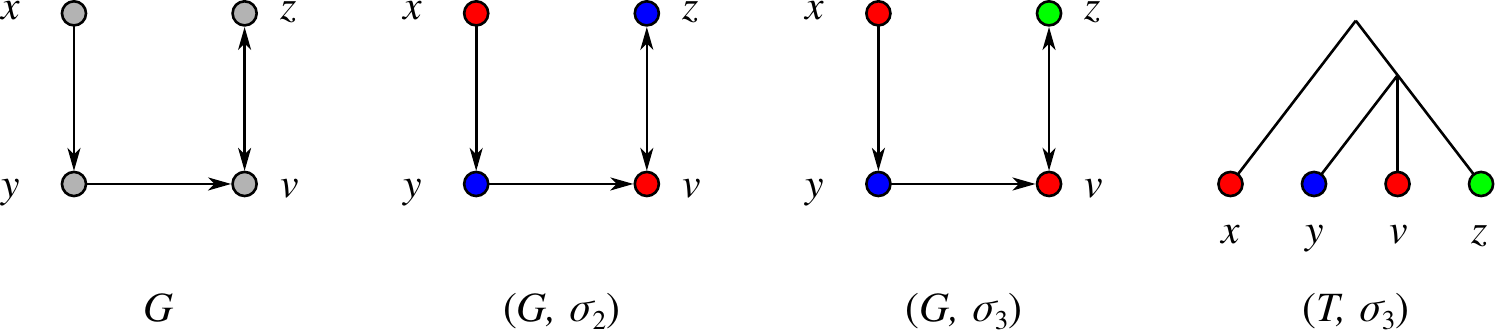}
  \caption{Example of a bipartite digraph $G$ with a qBMG-chromatic number
    of $3$.  The only proper 2-coloring (up to exchanging the colors) is
    $\sigma_2$.  However, $(G,\sigma_2)$ violates both \AX{(N1)} and
    \AX{(N2)} and thus, it is not a qBMG. For $\sigma_3$, we can easily find
    a truncation map $u$ such that $(G,\sigma_3)=\qbmg(T,\sigma_3,u)$ by
    setting 
    $u(v,\sigma_3(z))=u(y,\sigma_3(v))=u(x,\sigma_3(y))=u(z,\sigma_3(x))=
    \rho_T$, $u(x,\sigma_3(z))=x$, $u(v,\sigma_3(y))=v$, $u(z,\sigma_3(y))=z$ 
    and $u(y,\sigma_3(z))=y$.} 
  \label{fig:chromatic-number-3}
\end{figure}

This suggests to consider the \textsc{qBMG-coloring} problem: \emph{Given a
  digraph $G$ and an integer $1\le\ell\le |V(G)|$, is there a coloring
  $\sigma$ with $\sigma(V)=\ell$ such that $(G,\sigma)$ is an $\ell$-BMG?}
Proposition~\ref{prop:numcol}, furthermore, shows that every coloring
$\sigma$ of $G$ for which $(G,\sigma)$ is an $\ell$-qBMG with $\ell<|V(G)|$
can be transformed into an $(\ell+1)$ coloring $\sigma'$ by arbitrarily
splitting a color class resulting in an $(\ell+1)$-qBMG. Thus, for every
digraph, there is a minimum integer $\ell_{qBMG}$, the ``qBMG-chromatic
number'', such that an $\ell_{qBMG}$-coloring $\sigma$ exists for which
$(G,\sigma)$ is a qBMG. Figure~\ref{fig:chromatic-number-3} shows an
example of a bipartite graph $G$ that can be colored to be a 3-qBMG, while
there is no 2-coloring $\sigma$ such $(G,\sigma)$ is a 2-qBMG.

\bigskip\noindent
\textbf{Acknowledgments.}
This work was support in part by the German Research Foundation (DFG, STA
850/49-1).

\bibliography{qBMG-preprint2}

\end{document}